   \documentclass[final,leqno,onefignum,onetabnum]{siamart1116}
  \numberwithin{equation}{section}
   \numberwithin{theorem}{section}
 
\usepackage{bm}
\usepackage{graphicx}
\usepackage{amsmath}
\usepackage{amsfonts,euscript,eufrak,latexsym,amssymb,stmaryrd}
\usepackage{epstopdf} 
\usepackage{algorithm}
\usepackage{algpseudocode}
\usepackage{cleveref}

\DeclareFontFamily{OT1}{pzc}{}
\DeclareFontShape{OT1}{pzc}{m}{it}{<-> s * [1.10] pzcmi7t}{}
\DeclareMathAlphabet{\mathpzc}{OT1}{pzc}{m}{it}

\newcommand\oscf{{\rm osc}(f)}
\newcommand\osckf{{\rm osc}_{K}(f)}

\newcommand\B{{\mathcal B}}

\newcommand\Ch{{\mathcal C}_h}

\newcommand\Oc{\Omega^{c}_{h}}

\newcommand\Ohc{\Omega^{c}_{h}}

\newcommand\Ohck{\Omega^{c}_{h,k+1}}

\newcommand\Eh{{\mathcal E}_h}

\newcommand\PK{\Pi_h}

\newcommand\h{\mathpzc{h}}

\newcommand\ve{w_{E}}

\newsiamremark{remark}{Remark}
\begin{document}

\title{Mixed and stabilized finite element methods for the obstacle problem\thanks{Financial support from Tekes (Decisions nr. 40205/12 and 3305/31/2015), Portuguese Science Foundation (FCOMP-01-0124-FEDER-029408) and Finnish Cultural Foundation is gratefully acknowledged.
}}

\author{Tom Gustafsson\thanks{Department of Mathematics and Systems Analysis,
Aalto University, P.O. Box 11100,  \hfill \break00076 Aalto, Finland
e-mail: 
(\email{tom.gustafsson@aalto.fi}). }  \and Rolf Stenberg\thanks{Department of Mathematics and Systems Analysis,
Aalto University, P.O. Box 11100,  \hfill \break00076 Aalto, Finland
e-mail: 
(\email{rolf.stenberg@aalto.fi}). }  
 \and {Juha Videman}\thanks{CAMGSD/Departamento de Matem\'atica, Instituto Superior T\'ecnico, Universidade   de Lisboa, Av. Rovisco Pais 1, 1049-001 Lisboa, Portugal (\email{jvideman@math.tecnico.ulisboa.pt}). }  
}

\maketitle
\begin{abstract}
    We discretize the Lagrange multiplier formulation of the obstacle problem by mixed and stabilized finite element methods. A priori and a posteriori error estimates are
    derived and numerically verified.
\end{abstract}

\begin{keywords} 
    Obstacle problem, mixed finite elements, stabilized finite elements
\end{keywords}

\begin{AMS}65N30\end{AMS}


\section{Introduction}
\label{intro}

In its classical form, the obstacle problem is an archtype example of a variational inequality~\cite{LS67}. The problem corresponds to finding the equilibrium position of an elastic membrane constrained to lie above a rigid obstacle.
Other examples of obstacle-type problems are found, e.g., in lubrication theory~\cite{R87}, in flows through porous media~\cite{Fri82}, in control theory~\cite{PSU12} and in financial mathematics~\cite{Eva14}.

Discretization of the primal variational formulation of the obstacle problem by the finite element method has been extensively studied since the 1970's. Error estimates for the membrane displacement in the $H^1$-norm have been obtained, e.g., by Falk~\cite{F74}, Mosco--Strang~\cite{MS73} and Brezzi--Hager--Raviart~\cite{BHR77}. For an overview of the early progress on the subject and the respective references, see the monograph of Glowinski~\cite{G84}.

Instead of focusing on the primal formulation, we study an alternative variational formulation based on the method of Lagrange multipliers~\cite{B73,Br74,BHR78,Has81}. The Lagrange multiplier formulation introduces an additional physically relevant unknown, the reaction force between the membrane and the obstacle, which in itself can be a useful tool---especially in the context of contact mechanics, cf.~Hlav{\'a}{\v{c}}ek et al.~\cite{HHNL} or, more recently, Wohlmuth~\cite{W11}.
Furthermore, the alternative formulation leads naturally  to an effective solution strategy based on the semismooth Newton method~\cite{HIK03,U11} and  provides also a straightforward justification for the related Nitsche-type method that follows from the local elimination of the Lagrange multiplier in the stabilized discrete problem~\cite{S75}. 

A priori error analysis for  finite element methods based on a Lagrange multiplier formulation of the obstacle problem has been performed by Haslinger et al.~\cite{HHN}, Weiss--Wohlmuth~\cite{WW10} and  Schr\"oder et al.~\cite{SCH11,BS15}.  A posteriori error estimates were derived, e.g., in B{\"u}rg--Schr{\"o}der~\cite{BUSCH15} or Banz--Stephan~\cite{BASTEP} and, using a similar Lagrange multiplier formulation, in Veeser~\cite{V01}, Braess~\cite{B05} and Gudi--Porwal~\cite{GP15}.

The purpose of this paper is to readdress the Lagrange
  multiplier formulation of the obstacle problem.   
  We consider two  methods. The first is a mixed method in which the stability is achieved by adding bubble degrees of freedom  to the displacement. The second approach is  a residual-based stabilized method similar to the methods  used successfully for the Stokes problem \cite{Hughes-Franca,FS91}. The latter technique has been applied to variational inequalities arising from contact problems in Hild--Renard~\cite{HR10}.
  
Until our recent article on the Stokes problem \cite{SV15}, error estimates on stabilized methods were always built upon the assumption that the exact solution is regular enough, with the additional regularity assumptions arising from the stabilizing terms.  In \cite{SV15}, we realized that these extra regularity requirements can be dropped and only the loading term needs to be in $L^2$. In this work, we extend these ideas to variational inequalities. We emphasize that the improved error estimates can only be established if the discrete stability bound is proven in correct norms, i.e.~in the norms of the continuous problem and not in mesh dependent norms
 as, for example, in Hild--Renard~\cite{HR10}  where the authors need to assume that the solution to the Signorini problem is in $H^s$, with $s>3/2$. 
  
 We perform the analysis in a unified manner. First, we prove a stability result for the continuous problem in proper norms. This estimate becomes useful in deriving the a posteriori estimates. As for the discretizations, we start by proving  stability with respect to a mesh dependent norm. This discrete stability result implies stability in the continuous norms and yields quasi optimal a priori estimates without  additional regularity assumptions. For the stabilized methods, we use a technique first suggested by Gudi~\cite{Gudi}.  
 
 The stabilized formulation of the classical Babu$\check{\rm s}$ka's method of Lagrange multipliers for approximating Dirichlet boundary conditions is known to be closely related to Nitsche's method \cite{NIT}. This connection has been used for the contact problem in Hild--Renard~\cite{HR10} and Chouly--Hild~\cite{CH13}, and for the obstacle problem in \cite{BHLS}. We show that a similar relationship holds here as well and  observe that for the lowest order method it leads to the penalty formulation. 
 
 We end the paper by reporting on extensive numerical computations.

\section{Problem definition}
\label{sec:1}

Consider finding the equilibrium position $u=u(x,y)$ of an elastic, homogeneous membrane constrained to lie above an obstacle represented by $g=g(x,y)$. The membrane is loaded by a  normal force $f=f(x,y)$ and its boundary is held fixed.  The  problem  can be formulated as
\begin{equation}
    \label{strong}
    \begin{aligned}
        - \Delta u-f &\geq 0   &&{\rm in}~\Omega,\\ 
                 u-g &\geq 0   &&{\rm in}~\Omega,\\  
        (u-g)(\Delta u + f) &= 0 &&{\rm in}~\Omega,\\
                   u &=0       &&{\rm on}~\partial \Omega, 
    \end{aligned}
\end{equation}
where  $\Omega\subset\mathbb{R}^2$ is  a smooth bounded domain or a convex polygon, and  where  we assume that $g= 0 $ at $\partial\Omega $. Let $V=H^1_{0}(\Omega)$.
The problem \eqref{strong} can be recast as the following variational inequality:
find $u \in \mathcal{V}$ such that
\begin{equation}
    \big(\nabla u, \nabla (v-u)\big) \geq (f,v-u) \quad \forall v\in \mathcal{V} , 
\label{primalvi}
\end{equation}
where 
\begin{equation}\mathcal{V}=\{v\in  V :  v \geq g~{\rm a.e.~in }\ \Omega\}.
\end{equation}
It is well known that, given $f\in L^{2}(\Omega)$ and $g\in H^1(\Omega)\cap C(\overline{\Omega})$, problem \eqref{primalvi} admits a unique solution $u\in \mathcal{V}$, cf.~Lions--Stampacchia~\cite{LS67}, or Kinderlehrer--Stampacchia~\cite{KS80}.  
Given that the second derivatives of  $u$ have  a jump across the free boundary separating the contact region from the region free of contact, one cannot expect the solution to be more regular than $C^{1,1}(\Omega)$, cf.~Caffarelli~\cite{Caf97}. However, the second derivatives are bounded if the data is smooth~\cite{Fri82}. In particular, if  $f\in L^2(\Omega)$ and $g\in H^2(\Omega)$, the solution $u\in \mathcal{V}\cap H^{2}(\Omega)$, cf.~Brezis--Stampacchia~\cite{BS68}.

Introducing a non-negative Lagrange multiplier function $\lambda:\Omega\rightarrow \mathbb{R}$, we can rewrite the obstacle problem  as
\begin{equation}
    \label{stronglagr}
    \begin{aligned} 
        - \Delta u-\lambda&= f      &&{\rm in}~\Omega,\\
                       u-g&\geq 0   &&{\rm in}~\Omega,\\  
                   \lambda&\geq 0   &&{\rm in}~\Omega,\\  
           (u-g)\, \lambda&= 0      &&{\rm in}~\Omega,\\  
                         u&=0       &&{\rm on}~\partial \Omega.
    \end{aligned}
\end{equation}
The Lagrange multiplier is in the dual space to $H_0^1(\Omega)$, i.e.
$$  Q =H^{-1}(\Omega),$$
with the norm
\begin{equation}\label{negnorm}
    \|\xi\|_{-1}=\sup_{v \in V}\frac{\langle  v, \xi\rangle   }{\Vert   v\Vert_{1}},
\end{equation}
where $\langle \cdot,\cdot \rangle : V \times Q\rightarrow \mathbb{R}$ denotes the duality pairing.
 
The corresponding variational formulation becomes: 
find $(u,\lambda)\in V\times \Lambda$ such that
\begin{equation}
    \label{dualvi}
    \begin{aligned}
        (\nabla u, \nabla v ) -\langle v, \lambda\rangle &=  (f,v) &&\forall v\in V,  \\
        \langle u-g,\mu-\lambda\rangle &\geq 0 &&\forall \mu \in \Lambda,
    \end{aligned}
\end{equation}
where 
$$\Lambda=\{\mu\in Q : \langle v, \mu \rangle \geq 0 ~~\forall v \in V,~v \geq 0 ~{\rm a.e.~in }~\Omega \}.$$
Existence of a unique solution  $(u,\lambda)\in V\times \Lambda$ to the mixed problem \eqref{dualvi} and equivalence between formulations \eqref{primalvi} and \eqref{dualvi} has been proven, e.g., in Haslinger et al.~\cite{HHN}.

Let $U=V\times Q$ and define the bilinear form $\mathcal{B}:U\times U\rightarrow \mathbb{R}$ and the linear form $\mathcal{L}:U\rightarrow \mathbb{R}$ through 
\begin{align*}
    \mathcal{B}(w,\xi;v,\mu)&=(\nabla w,\nabla v)-\langle v,\xi \rangle-\langle w,\mu\rangle\, , \\
    \mathcal{L}(v,\mu)&=(f,v)-\langle g,\mu\rangle\, .
\end{align*}
 Problem \eqref{dualvi} can now be written in a compact way as:
find $(u,\lambda)\in V\times \Lambda$ such that
\begin{equation}\begin{array}{rl}
\mathcal{B}(u,\lambda;v,\mu-\lambda) \leq \mathcal{L}(v,\mu-\lambda) \qquad \forall(v,\mu)\in V\times \Lambda \, .
\label{bform}
\end{array}
\end{equation} 

Our analysis  is built upon the following stability condition.
Note that we often write $a \gtrsim b$ (or $a \lesssim b$) when $a \geq C b$ (or $a \leq C b$) for some positive constant $C$ independent of the finite element mesh.
\begin{theorem} \label{contstab} For all $(v,\xi)\in V\times Q$ there exists $w\in V$ such that 
\begin{equation}
 \B(v,\xi;w,-\xi)\gtrsim   \big(   \Vert v  \Vert_{1} + \Vert \xi  \Vert_{-1}  \big)^{2}
 \end{equation}
  and 
   \begin{equation}
\Vert w\Vert_{1} \lesssim \Vert v  \Vert_{1} + \Vert \xi  \Vert_{-1} .
 \end{equation}
\end{theorem}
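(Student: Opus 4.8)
The plan is to build the stabilizing direction $w$ by correcting $v$ with the Riesz representative of the multiplier $\xi$. Since $(\nabla\cdot,\nabla\cdot)$ is an inner product on $V=H^1_0(\Omega)$ equivalent to the full $H^1$-inner product (by the Poincar\'e inequality), the Riesz representation theorem provides a unique $w_\xi\in V$ with $(\nabla w_\xi,\nabla\phi)=\langle\phi,\xi\rangle$ for all $\phi\in V$. This function realizes the negative norm: one has $|w_\xi|_1 \simeq \Vert\xi\Vert_{-1}$ and, testing against $\phi=w_\xi$, the identity $\langle w_\xi,\xi\rangle = |w_\xi|_1^2 \simeq \Vert\xi\Vert_{-1}^2$.

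I would then take $w = v - \delta w_\xi$ for a fixed parameter $\delta>0$ to be chosen at the end. Substituting into the form and using the defining property of $w_\xi$ to rewrite $(\nabla v,\nabla w_\xi)=\langle v,\xi\rangle$ and $\langle w_\xi,\xi\rangle=|w_\xi|_1^2$, the duality contributions $\langle v,\xi\rangle$ telescope and one is left with
\[
\B(v,\xi;w,-\xi) = |v|_1^2 + \delta\,|w_\xi|_1^2 - \delta\,\langle v,\xi\rangle .
\]
The cross term is the only one with an indefinite sign, and it is controlled by Young's inequality via $\langle v,\xi\rangle = (\nabla v,\nabla w_\xi)\le |v|_1\,|w_\xi|_1$; absorbing half of $|v|_1^2$ leaves the positive combination $\tfrac12|v|_1^2 + (\delta - \tfrac{\delta^2}{2})|w_\xi|_1^2$, which for $\delta$ small enough (e.g.\ $\delta=1$) is bounded below by a constant multiple of $|v|_1^2 + |w_\xi|_1^2$. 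The norm equivalences $|v|_1\simeq\Vert v\Vert_1$ and $|w_\xi|_1\simeq\Vert\xi\Vert_{-1}$, together with $a^2+b^2\ge\tfrac12(a+b)^2$, then yield the asserted lower bound by $\big(\Vert v\Vert_1+\Vert\xi\Vert_{-1}\big)^2$.

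The continuity estimate is immediate: by the triangle inequality $\Vert w\Vert_1\le\Vert v\Vert_1+\delta\Vert w_\xi\Vert_1\lesssim\Vert v\Vert_1+\Vert\xi\Vert_{-1}$.

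I do not expect a genuine obstacle here; the argument is a Fortin-type construction and the only points requiring care are the bookkeeping of signs in the duality pairings (so that the $\langle v,\xi\rangle$ contributions cancel and the multiplier term enters with the correct, positive, sign) and the systematic use of the Poincar\'e inequality to pass between the $H^1$-seminorm, in which the Riesz identity is cleanest, and the full norms appearing in the statement.
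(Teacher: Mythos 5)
Your proposal is correct and follows essentially the same route as the paper: there one also sets $w=v-q$ with $q\in V$ the Riesz representative of $\xi$, obtains the same telescoping of the duality terms, and absorbs the single indefinite cross term by Young's inequality. The only cosmetic difference is that the paper defines $q$ via the full $H^1$ inner product, $(\nabla q,\nabla z)+(q,z)=\langle z,\xi\rangle$ for all $z\in V$, which gives the exact equality $\|q\|_1=\|\xi\|_{-1}$ and thus spares the Poincar\'e equivalences your seminorm-based Riesz map requires.
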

\begin{proof}
Suppose the pair $(v,\xi)\in V \times Q$ is given and
let $w=v-q$, where $q\in V$ satisfies
\begin{equation}
\label{thm21:dualproblem}
    (\nabla q, \nabla z)+(q,z) = \langle z, \xi \rangle \quad \forall z \in V.
\end{equation}
Choosing the test function $z=q$, gives
\begin{equation}
    \langle q, \xi \rangle = (\nabla q, \nabla q) + (q,q) = \|q\|_1^2 ,
\end{equation}
and hence we obtain
\begin{equation}
    \label{thm21:result1}
   \|q\|_1 = \frac{\langle \xi, q \rangle}{\|q\|_1} \leq \sup_{z \in V}\frac{\langle \xi, z \rangle}{\|z\|_1} =\|\xi\|_{-1}.
\end{equation}
The norm of $\xi$ can be bounded from above using \eqref{thm21:dualproblem} and the Cauchy--Schwarz inequality as follows:
\begin{equation}
\label{thm21:result2}
    \begin{aligned}
        \|\xi\|_{-1} &= \sup_{z\in V} \frac{\langle z, \xi \rangle}{\|z\|_1} = \sup_{z\in V} \frac{(\nabla q, \nabla z)+(q,z)}{\|z\|_1} \leq \|q\|_1.
    \end{aligned}
\end{equation}
This implies that $\|q\|_1 = \|\xi\|_{-1}$.
Using now the results \eqref{thm21:result1} and \eqref{thm21:result2} and Poincar\'{e}'s and Cauchy--Schwarz inequalities, we conclude that
\begin{align*}
        \mathcal{B}(v,\xi;w,-\xi) &= (\nabla v,\nabla w)+\langle v-w, \xi \rangle \\
                                  &= (\nabla v,\nabla v)-(\nabla v, \nabla q)+\langle q, \xi \rangle \\
                                  &\geq \|\nabla v\|_0^2-\|\nabla v\|_0 \|\nabla q\|_0+\|q\|_1^2 \\
                                  &\gtrsim \left( \|v\|_1 + \|\xi\|_{-1}\right)^2.
\end{align*}
Finally, from the triangle inequality it follows that
\begin{align*}
        \|w\|_{1} &= \|v-q\|_1 \leq \|v\|_1+\|q\|_1 = \|v\|_1+\|\xi\|_{-1}.
\end{align*}
\end{proof}

We will consider finite element spaces based on a conforming shape-regular  triangulation $\Ch$ of $\Omega$, which we henceforth assume to be polygonal. By $\Eh$ we denote the interior edges  of $\Omega$.   
The finite element subspaces are 
\[
V_h\subset V, \quad  Q_h\subset Q.\ \]
Moreover, we define
\[
 \Lambda_h=\{\mu_h\in Q_h : \mu_h  \geq 0 \ {\rm in} \ \Omega\}\subset \Lambda .
\]

\begin{remark}
 When $Q_h$ are piecewise polynomials of degree two or higher, the condition $\mu_h\geq 0$ becomes difficult to satisfy in practice. In that case, one option would be to implement this condition in discrete points, as is done in \cite{BS15}. We do not pursue this path however since it would lead to a nonconforming method with $\Lambda_h\not\subset \Lambda$ and require a separate analysis.
 \end{remark}
We first consider methods corresponding to the continuous problem \eqref{bform}.

\section{Mixed methods}
\label{sec:2}
 
The mixed finite element method for problem \eqref{bform} reads as follows.

\textsc{The mixed method. }{\em
Find $(u_h,\lambda_h)\in V_h\times \Lambda_h$ such that
\begin{equation}\label{mixedm} \begin{array}{rl}
\B(u_h,\lambda_h;v_h,\mu_h-\lambda_h) \leq \mathcal{L}(v_h,\mu_h-\lambda_h) \qquad \forall(v_h,\mu_h)\in V_h\times \Lambda_h .
\end{array}
\end{equation}}

For this class of methods, the finite element spaces have to satisfy the  ``Babu$\check{\rm s}$ka--Brezzi'' condition
\begin{equation}\label{femstab}
\sup_{v_{h}\in V_{h}}\frac{\langle  v_{h}, \xi_{h}\rangle   }{\Vert v_{h}\Vert_{1}}\gtrsim \Vert \xi_{h}\Vert _{-1} \quad \forall \xi_{h}\in Q _{h}. \end{equation}
The Babu$\check{\rm s}$ka--Brezzi condition implies the following discrete stability estimate.
\begin{theorem} If  condition \eqref{femstab} is valid, then  
for all $(v_{h},\xi_{h})\in V_{h}\times Q_{h}$, there exists $w_{h}\in V_{h}$, such that 
\begin{equation}
 \B(v_{h},\xi_{h};w_{h},-\xi_{h}) \gtrsim\big(   \Vert v _{h}\Vert_{1} + \Vert \xi_{h} \Vert_{-1}  \big)^{2}
\end{equation}
and
\begin{equation}
  \Vert w _{h}\Vert_{1}\lesssim \Vert v _{h}\Vert_{1}+ \Vert \xi _{h}\Vert_{-1}.
\end{equation}
\label{dsmixed}
\end{theorem}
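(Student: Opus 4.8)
The plan is to follow the proof of Theorem~\ref{contstab}, the only difficulty being that the corrector $q\in V$ constructed there via the Riesz problem~\eqref{thm21:dualproblem} generally does not belong to $V_h$. I would therefore replace it by a discrete supremizer supplied by the Babu\v{s}ka--Brezzi condition~\eqref{femstab}. Given $(v_h,\xi_h)\in V_h\times Q_h$, condition~\eqref{femstab} produces $z_h\in V_h$ with $\langle z_h,\xi_h\rangle\gtrsim\Vert z_h\Vert_1\,\Vert\xi_h\Vert_{-1}$; after rescaling $z_h$ so that $\Vert z_h\Vert_1=\Vert\xi_h\Vert_{-1}$, this reads
\[
  \langle z_h,\xi_h\rangle \gtrsim \Vert\xi_h\Vert_{-1}^2, \qquad \Vert z_h\Vert_1 = \Vert\xi_h\Vert_{-1}.
\]
This $z_h$ takes over the role of the corrector $-q$ from the continuous argument.

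Next I would test the form with $w_h=v_h-\delta z_h$, where $\delta>0$ is a mesh-independent constant to be fixed. Reusing the identity $\B(v_h,\xi_h;w_h,-\xi_h)=(\nabla v_h,\nabla w_h)+\langle v_h-w_h,\xi_h\rangle$ established in the proof of Theorem~\ref{contstab}, the specific choice of $w_h$ gives
\[
  \B(v_h,\xi_h;w_h,-\xi_h)=\Vert\nabla v_h\Vert_0^2-\delta(\nabla v_h,\nabla z_h)+\delta\langle z_h,\xi_h\rangle.
\]
The cross term is bounded by $\delta(\nabla v_h,\nabla z_h)\le\delta\Vert\nabla v_h\Vert_0\Vert\xi_h\Vert_{-1}$, and I would split it with Young's inequality so as to retain half of $\Vert\nabla v_h\Vert_0^2$ while generating only a term of order $\delta^2\Vert\xi_h\Vert_{-1}^2$. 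Choosing $\delta$ small enough that this term is absorbed by $\delta\langle z_h,\xi_h\rangle\gtrsim\delta\Vert\xi_h\Vert_{-1}^2$ leaves
\[
  \B(v_h,\xi_h;w_h,-\xi_h)\gtrsim\Vert\nabla v_h\Vert_0^2+\Vert\xi_h\Vert_{-1}^2.
\]
Poincar\'{e}'s inequality then upgrades $\Vert\nabla v_h\Vert_0$ to $\Vert v_h\Vert_1$, and since $(\Vert v_h\Vert_1+\Vert\xi_h\Vert_{-1})^2\le2(\Vert v_h\Vert_1^2+\Vert\xi_h\Vert_{-1}^2)$ the first assertion follows. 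The second assertion is immediate from the triangle inequality and the normalization of $z_h$, namely $\Vert w_h\Vert_1\le\Vert v_h\Vert_1+\delta\Vert z_h\Vert_1=\Vert v_h\Vert_1+\delta\Vert\xi_h\Vert_{-1}\lesssim\Vert v_h\Vert_1+\Vert\xi_h\Vert_{-1}$.

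The argument is the standard Verf\"urth-type combination of coercivity on the displacement block with the inf-sup supremizer, so none of the steps is genuinely hard. The only points requiring care are that the constant $\delta$ be chosen independently of $h$---which is possible precisely because the hidden constant in~\eqref{femstab} is mesh-independent---and that all estimates be carried out in the continuous norms $\Vert\cdot\Vert_1$ and $\Vert\cdot\Vert_{-1}$ rather than in mesh-dependent norms, as emphasized in the introduction. The conceptual crux is simply the substitution of the unavailable continuous corrector $q$ by the discrete supremizer $z_h$.
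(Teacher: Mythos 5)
Your proof is correct, but it reaches the corrector by a different construction than the paper does. The paper solves an auxiliary discrete variational problem: it takes $q_h\in V_h$ with $(\nabla q_h,\nabla z_h)+(q_h,z_h)=\langle z_h,\xi_h\rangle$ for all $z_h\in V_h$, and sets $w_h=v_h-q_h$ with no scaling parameter. This discrete Riesz projection yields the exact identity $\langle q_h,\xi_h\rangle=\Vert q_h\Vert_1^2$ together with $\Vert q_h\Vert_1\le\Vert\xi_h\Vert_{-1}$, so the cross term $(\nabla v_h,\nabla q_h)$ is absorbed by Young's inequality with the absolute constant $\tfrac12$, and the inf-sup condition \eqref{femstab} enters only once, to bound $\Vert q_h\Vert_1\gtrsim\Vert\xi_h\Vert_{-1}$ from below; the argument thus runs verbatim parallel to the continuous proof of Theorem~\ref{contstab}. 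You instead take the inf-sup supremizer $z_h$ directly, normalize $\Vert z_h\Vert_1=\Vert\xi_h\Vert_{-1}$, and scale by a small $\delta$ --- the classical Verf\"urth-type trick. This is more elementary in that no auxiliary problem is needed, but the price is that $\delta$ must be tuned against the hidden inf-sup constant: the coupling term $\delta\langle z_h,\xi_h\rangle\gtrsim\delta\Vert\xi_h\Vert_{-1}^2$ carries only the inf-sup constant, while the loss from Young's inequality is of order $\delta^2\Vert\xi_h\Vert_{-1}^2$, so $\delta$ small (depending on that constant but not on $h$) is genuinely required --- which you state and handle correctly. Two trivial points for completeness: the normalization presupposes $\xi_h\neq 0$ (the case $\xi_h=0$ is settled by $w_h=v_h$), and since $V_h$ is finite dimensional the supremum in \eqref{femstab} is attained, so your supremizer exists (a near-supremizer with the constant halved would serve equally well).
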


\begin{proof}
    Let $w_h = v_h-q_h$, where $q_h \in V_h$ is such that
    \[
        (\nabla q_h, \nabla z_h)+(q_h,z_h) = \langle z_h, \xi_h  \rangle \quad \forall z_h \in V_h.
    \]
   By condition \eqref{femstab} and the Cauchy--Schwarz inequality we have
    \begin{align*}
        \|\xi_h\|_{-1} &\lesssim \sup_{z_h \in V_h} \frac{\langle z_h, \xi_h \rangle}{\|z_h\|_1} = \sup_{z_h \in V_h} \frac{(\nabla q_h, \nabla z_h)+(q_h,z_h)}{\|z_h\|_1} \leq \|q_h\|_1.
    \end{align*}
    Similarly as in the proof of Theorem~\ref{contstab}, we get
    \begin{align*}
        \mathcal{B}(v_h,\xi_h; w_h,-\xi_h) &= (\nabla v_h, \nabla w_h) + \langle \xi_h, q_h \rangle \\
        &= \|\nabla v_h\|_0^2 -(\nabla v_h, \nabla q_h)+\|q_h\|_1^2 \\
        &\gtrsim \|\nabla v_h\|_0^2 + \|q_h\|_1^2 \\
        &\gtrsim \left(\|v_h\|_1 + \|\xi_h\|_{-1}\right)^2.
    \end{align*}
    Finally,
    \begin{align*}
      \|w_h\|_1 &= \|v_h - q_h\|_1 \leq \|v_h\|_1 + \|q_h\|_1 \lesssim \|v_h\|_1 + \|\xi_h\|_{-1}. 
    \end{align*}
\end{proof}

We will use the technique of bubble functions to define a family of stable finite element pairs.
 With $b_{K}\in P_{3}(K)\cap H^{1}_{0}(K)$ we denote  the bubble function scaled to have a maximum value of one and define
 \begin{equation}
B_{l+1}(K)=\{ z \in H^1_0(K) : z=b_Kw, \ w\in  \widetilde P_{l-2}(K) \},
\end{equation}
where $\widetilde P_{l-2}(K) $ denotes the space of homogeneous polynomials of degree $l-2$. 
Let $k\geq 1$ be the degree of the finite element spaces defined by
\begin{equation}\label{Vh} V_h = \begin{cases}
    \{ v_h\in V : v_h\vert _K \in P_1(K)\oplus B_{3}(K) \ \forall K \in \Ch \}   &\mbox{ for } k=1,\\
    \{ v_h\in V : v_h\vert _K \in P_k(K)\oplus B_{k+1}(K) \ \forall K \in \Ch \} &\mbox{ for } k\geq 2, 
\end{cases}
\end{equation}
and let
\begin{equation} 
Q_h = \begin{cases}\label{Lh}
    \{ \xi_h \in Q : \xi_h\vert_K \in P_{0} (K) \ \forall K \in \Ch \} &\mbox{ for } k=1,\\
    \{ \xi_h \in Q : \xi_h\vert_K \in P_{k-2} (K) \ \forall K \in \Ch \}  &\mbox{ for } k\geq 2.
\end{cases}
\end{equation}
Note that the approximation orders of the finite element spaces are balanced, i.e.
\begin{equation}
\label{balance}
\inf_{v_{h}\in V_{h}}\Vert u-v_{h} \Vert_{1 }={\mathcal O}(h^{k}) \ \mbox{ and } \ \inf_{\xi_{h}\in Q_{h}}\Vert \lambda-\xi_{h} \Vert_{-1 }={\mathcal O}(h^{k}) ,
\end{equation}
when $u\in H^{k+1}(\Omega)$ and $\lambda\in H^{k-1}(\Omega)$.

We will use the following discrete negative norm 
in proving the stability condition:
\begin{equation}
\Vert \xi_{h}\Vert_{-1,h}^{2}=\sum_{K\in \Ch}h_{K}^{2}\Vert \xi_{h}\Vert_{0,K}^{2}  \quad \forall \xi_{h}\in Q_{h}.
\end{equation}
Analogously to the Stokes problem~\cite{S90}, we will need the following auxiliary result. Note that the result holds independently of the choice of  the finite element spaces. 
 \begin{lemma}\label{lem:aux} There exist positive constants $C_1$ and $C_2$ such that 
 \begin{equation}\label{aux}
 \sup_{v_h\in V_h} \frac{\langle  v_h, \xi_h\rangle   }{\Vert   v_h\Vert_{1}} \ge C_1 \Vert \xi_h\Vert_{-1} -C_2 \Vert \xi_{h}\Vert_{-1,h} \quad \forall \xi_h \in Q_h.
 \end{equation}
 \end{lemma}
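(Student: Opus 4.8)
The plan is to use the standard ``Verf\"urth trick'' familiar from the stability theory of the Stokes problem, namely to test $\xi_h$ against a quasi-interpolant of its Riesz representative. Since the right-hand side of \eqref{aux} is independent of $V_h$ and the supremum on the left only grows as $V_h$ is enlarged, it suffices to establish the bound when $V_h$ is taken to be the space of continuous piecewise linear functions vanishing on $\partial\Omega$; this is what is meant by the estimate holding independently of the finite element spaces. The only ingredient needed is a quasi-interpolation operator $\Pi_h : V \to V_h$ of Cl\'ement/Scott--Zhang type that is $H^1$-stable and enjoys the local $L^2$ approximation property.

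First I would fix $\xi_h \in Q_h$ and introduce the Riesz representative $v\in V$ solving $(\nabla v,\nabla z)+(v,z)=\langle z,\xi_h\rangle$ for all $z\in V$ --- this is precisely the element $q$ appearing in the proof of Theorem~\ref{contstab}, so that $\|v\|_1=\|\xi_h\|_{-1}$ and $\langle v,\xi_h\rangle=\|v\|_1^2=\|\xi_h\|_{-1}^2$. Next I would take $\Pi_h v\in V_h$ and, recalling that $\xi_h$ is an $L^2$ function (a piecewise polynomial), split the pairing as
\begin{equation*}
\langle \Pi_h v,\xi_h\rangle = \langle v,\xi_h\rangle - (v-\Pi_h v,\xi_h) = \|\xi_h\|_{-1}^2 - \sum_{K\in\Ch}(v-\Pi_h v,\xi_h)_K .
\end{equation*}

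The remaining term is the crux. Using the elementwise Cauchy--Schwarz inequality together with the local approximation estimate $\|v-\Pi_h v\|_{0,K}\lesssim h_K\|v\|_{1,\omega_K}$, where $\omega_K$ is the patch of elements surrounding $K$, and then a discrete Cauchy--Schwarz over the elements, I would bound
\begin{equation*}
\Big|\sum_{K\in\Ch}(v-\Pi_h v,\xi_h)_K\Big| \lesssim \Big(\sum_{K\in\Ch}\|v\|_{1,\omega_K}^2\Big)^{1/2}\Big(\sum_{K\in\Ch}h_K^2\|\xi_h\|_{0,K}^2\Big)^{1/2} \lesssim \|v\|_1\,\|\xi_h\|_{-1,h},
\end{equation*}
where the finite overlap of the patches (a consequence of shape regularity) absorbs the sum of $\|v\|_{1,\omega_K}^2$ into $\|v\|_1^2$, and the second factor is exactly $\|\xi_h\|_{-1,h}$. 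Combining this with the $H^1$-stability $\|\Pi_h v\|_1\lesssim\|v\|_1=\|\xi_h\|_{-1}$ gives
\begin{equation*}
\frac{\langle \Pi_h v,\xi_h\rangle}{\|\Pi_h v\|_1} \gtrsim \frac{\|\xi_h\|_{-1}^2 - C\|\xi_h\|_{-1}\,\|\xi_h\|_{-1,h}}{\|\xi_h\|_{-1}} = C_1\|\xi_h\|_{-1}-C_2\|\xi_h\|_{-1,h},
\end{equation*}
and since $\Pi_h v\in V_h$ the left-hand side is a lower bound for the supremum, which is the claim.

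I expect the main obstacle to be purely technical: assembling the precise properties of the quasi-interpolation operator --- namely that it maps $H^1_0(\Omega)$ into $V_h\cap H^1_0(\Omega)$, is stable in the $H^1$-norm, and satisfies the scaled local $L^2$ estimate --- and verifying the finite-overlap bound for the patches under shape regularity. No compatibility between $V_h$ and $Q_h$ enters, which is exactly why the estimate is stated independently of the spaces; the Babu\v{s}ka--Brezzi condition \eqref{femstab} will be recovered afterwards by combining \eqref{aux} with a separate control of $\|\xi_h\|_{-1,h}$.
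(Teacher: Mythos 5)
Your proposal is correct and takes essentially the same route as the paper's proof: the paper likewise picks a continuous function nearly realizing the dual norm (via Theorem~\ref{contstab}, whose proof constructs exactly your Riesz representative), replaces it by its Cl\'ement interpolant in $V_h$, splits the pairing using that it coincides with the $L^2$ inner product for $\xi_h\in Q_h$, bounds the interpolation term by $\|\xi_h\|_{-1,h}$ through the scaled local $L^2$ estimate, and finishes with the $H^1$-stability of the interpolant. Your substitution of the exact identity $\langle v,\xi_h\rangle=\|\xi_h\|_{-1}^2$ for the paper's inf-sup inequality is only cosmetic, and the one point you both gloss over (the final division is valid as stated only when the lower bound is nonnegative, the other case being trivial since the supremum in \eqref{aux} is nonnegative) is harmless and present in the paper's own proof as well.
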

 \begin{proof}
     The continous stability (Theorem~\ref{contstab}) implies that there exists $w \in H^1_0(\Omega)$ and $C>0$ such that
\begin{equation}
    \label{lem11:cinfsup}
    \langle w, \xi_h \rangle \geq C \|w\|_1\|\xi_h\|_{-1}
\end{equation}
for all $\xi_h \in Q_h$. Let $\widetilde{w} \in V_h$ be the Cl\'{e}ment interpolant \cite{C75} of $w$. Since $\xi_{h}\in L^{2}(\Omega)$ the duality pairing equals to the $L^{2}$-inner product. Then \eqref{lem11:cinfsup} and the Cauchy--Schwarz inequality give 
\begin{equation}
    \label{lem11:part1}
    \begin{aligned}
            \langle \widetilde{w}, \xi_h \rangle &= \langle \widetilde{w}-w,\xi_h \rangle + \langle w,\xi_h \rangle \\
                                                  &=  \sum_{K \in \mathcal{C}_h} (w-\widetilde{w},  \xi_h)_{K}+C \|w\|_1\|\xi_h\|_{-1}\\
                                                 &\geq - \sum_{K \in \mathcal{C}_h} \|w-\widetilde{w}\|_{0,K} \|\xi_h\|_{0,K}+C \|w\|_1\|\xi_h\|_{-1} \\
                                                 &= - \sum_{K \in \mathcal{C}_h} h_K^{-1} \|w-\widetilde{w}\|_{0,K} h_K \|\xi_h\|_{0,K}+C \|w\|_1 \|\xi_h\|_{-1} \\
                                                 &\geq - \Big(\sum_{K \in \mathcal{C}_h} h_K^{-2} \|w-\widetilde{w}\|_{0,K}^2\Big)^{\frac12} \|\xi_h\|_{-1,h} + C \|w\|_1 \|\xi_h\|_{-1}.
    \end{aligned}
\end{equation}
From the properties of the Cl\'{e}ment interpolant, we have
\[
    \Big(\sum_{K \in \Ch} h_K^{-2} \|w-\widetilde{w}\|^2_{0,K}\Big)^{\frac12} \leq C^\prime |w|_{1,K} \quad \text{and} \quad \|\widetilde{w}\|_1 \leq C^{\prime\prime} \|w\|_1
\]
which together with \eqref{lem11:part1} shows that
\begin{align*}
    \langle \widetilde{w}, \xi_h \rangle &\geq -C^\prime |w|_1 \|\xi_h\|_{-1,h}+C \|w\|_1 \|\xi_h\|_{-1} \\
                                         &\geq -C^\prime \|w\|_1 \|\xi_h\|_{-1,h}+C\|w\|_1\|\xi_h\|_{-1} \\
                                         &\geq C^{\prime\prime}(C\|\xi_h\|_{-1}-C^\prime\|\xi_h\|_{-1,h})\|\widetilde{w}\|_1.
\end{align*}
Dividing by $\|\widetilde{w}\|_1$ provides the claim.
\end{proof}
 
Using this  result one proves the following.
\begin{lemma}\label{lem:discstab} If we have stability in the discrete norm, i.e.
\begin{equation}\label{hstab}
\sup_{w_{h}\in V_{h}}\frac{\langle w_{h}, \xi_{h}\rangle   }{\Vert w_{h}\Vert_{1}}\gtrsim\Vert \xi_{h}\Vert _{-1,h} \quad \forall \xi_{h}\in Q_{h} \end{equation}
then the Babu$\check{\rm s}$ka--Brezzi condition \eqref{femstab} holds.
\end{lemma}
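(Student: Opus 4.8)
The plan is to combine the auxiliary estimate of Lemma~\ref{lem:aux} with the hypothesized discrete stability \eqref{hstab}, the point being that the discrete negative norm $\Vert\xi_h\Vert_{-1,h}$, which enters \eqref{aux} with a negative sign and thus spoils a clean inf--sup bound in the continuous norm, can be \emph{absorbed} once it is itself controlled by the same supremum. Throughout I would fix an arbitrary $\xi_h\in Q_h$ and abbreviate the quantity of interest by $S=\sup_{v_h\in V_h}\langle v_h,\xi_h\rangle/\Vert v_h\Vert_1$, so that the goal \eqref{femstab} reads simply $S\gtrsim\Vert\xi_h\Vert_{-1}$.

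First I would invoke Lemma~\ref{lem:aux}, which supplies constants $C_1,C_2>0$ with $S\ge C_1\Vert\xi_h\Vert_{-1}-C_2\Vert\xi_h\Vert_{-1,h}$, equivalently $C_1\Vert\xi_h\Vert_{-1}\le S+C_2\Vert\xi_h\Vert_{-1,h}$. Next I would use the assumed discrete stability \eqref{hstab}, which furnishes a constant $c>0$ such that $c\,\Vert\xi_h\Vert_{-1,h}\le S$; that is, the very same supremum $S$ already bounds the discrete negative norm from above.

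Substituting this bound into the rearranged estimate gives $C_1\Vert\xi_h\Vert_{-1}\le S+(C_2/c)\,S=(1+C_2/c)\,S$, and hence $\Vert\xi_h\Vert_{-1}\lesssim S$. Since $\xi_h\in Q_h$ was arbitrary, this is exactly the Babu\v{s}ka--Brezzi condition \eqref{femstab}, completing the argument.

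I expect no genuine obstacle at this stage: all the analytic content is already packed into Lemma~\ref{lem:aux} (the Cl\'ement-interpolation splitting of the continuous inf--sup test function, in the spirit of Verf\"urth's trick). The only thing to watch is the bookkeeping of constants—ensuring that the $c$ extracted from \eqref{hstab} is strictly positive and mesh-independent, so that the resulting factor $1+C_2/c$ is a legitimate hidden constant in the relation $\lesssim$—but this is immediate from the standing convention that all such constants are independent of the mesh.
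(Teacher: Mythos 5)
Your proof is correct and follows essentially the same route as the paper: both arguments combine Lemma~\ref{lem:aux} with the hypothesis \eqref{hstab} to absorb the $\Vert\xi_h\Vert_{-1,h}$ term, the paper doing so via a convex combination with a parameter $t$ and you via a direct rearrangement and substitution, which are algebraically equivalent. Your remark on the mesh-independence of the constants is exactly the right point to watch, and it holds here.
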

\begin{proof}
Suppose \eqref{hstab} holds.
Then by Lemma~\ref{lem:aux}, for $t>0$ we have
\begin{equation}
    \label{lem12:part1}
    \begin{aligned}
            \sup_{w_h \in V_h} \frac{\langle w_h, \xi_h\rangle}{\|w_h\|_1} &= t\sup_{w_h \in V_h} \frac{\langle w_h, \xi_h\rangle}{\|w_h\|_1}+(1-t)\sup_{w_h \in V_h} \frac{\langle w_h, \xi_h\rangle}{\|w_h\|_1} \\
                                                                           &\geq t(C_1 \|\xi_h\|_{-1}-C_2\|\xi_h\|_{-1,h})+(1-t)C_3\|\xi_h\|_{-1,h}\\
                                                                           &= t C_1 \|\xi_h\|_{-1} + (C_3-C_3t-C_2t)\|\xi_h\|_{-1,h}.
    \end{aligned}
\end{equation}
Thus, if we choose ~$t=\frac12 C_3(C_2+C_3)^{-1}$,
the second term on the right hand side of \eqref{lem12:part1} is positive and hence
\[
    \sup_{w_h \in V_h} \frac{\langle w_h, \xi_h\rangle}{\|w_h\|_1} \geq \frac{C_1 C_3}{2(C_2+C_3)} \|\xi_h\|_{-1} \quad \forall \xi_h \in Q_h.
\]
\end{proof}

The advantage in using the intermediate step in proving the stability in the mesh-dependent norm is that the discrete negative norm can be computed elementwise in contrast to the continuous norm which is global.
\begin{lemma} The finite element spaces \eqref{Vh}  and \eqref{Lh} satisfy the Babu$\check{\rm s}$ka--Brezzi condition \eqref{femstab}.
\end{lemma}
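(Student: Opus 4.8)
The plan is to invoke Lemma~\ref{lem:discstab}, which reduces the assertion to verifying the mesh-dependent stability \eqref{hstab}. Thus it suffices to exhibit, for each given $\xi_h \in Q_h$, a single function $w_h \in V_h$ satisfying $\langle w_h, \xi_h\rangle \gtrsim \|\xi_h\|_{-1,h}^2$ together with $\|w_h\|_1 \lesssim \|\xi_h\|_{-1,h}$, since then the supremum in \eqref{hstab} is bounded below by the ratio $\langle w_h,\xi_h\rangle/\|w_h\|_1 \gtrsim \|\xi_h\|_{-1,h}$. Following the bubble-stabilization idea used for the Stokes problem \cite{S90}, I would take the elementwise-defined candidate
\[
    w_h|_K = h_K^2\, b_K\, \xi_h|_K , \qquad K \in \Ch .
\]
Because $b_K \in H^1_0(K)$, the function $w_h$ vanishes on every element boundary; it is therefore continuous across interior edges and vanishes on $\partial\Omega$, so the only genuine point to check for $w_h \in V_h$ is that it respects the local polynomial structure \eqref{Vh}.

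This membership is the crux of the argument, and it is exactly where the matching between \eqref{Vh} and \eqref{Lh} is exploited. For $k \ge 2$ I would split $\xi_h|_K = p + r$, where $p \in \widetilde P_{k-2}(K)$ is the part of $\xi_h$ that is homogeneous of degree $k-2$ and $r \in P_{k-3}(K)$ collects the lower-order terms. Then $b_K p \in B_{k+1}(K)$ directly from the definition of the bubble space, while $b_K r$ is a polynomial of degree at most $3+(k-3)=k$, hence $b_K r \in P_k(K)$. Consequently $b_K\,\xi_h|_K \in P_k(K)\oplus B_{k+1}(K)=V_h|_K$, so $w_h \in V_h$. The case $k=1$ is the same but simpler: there $\xi_h|_K$ is constant and $b_K\,\xi_h|_K \in B_3(K) \subset V_h|_K$.

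The two remaining bounds I expect to be routine. Transforming to the reference element and using the positivity of $b_K$ together with the equivalence of norms on the finite-dimensional space $P_{k-2}(K)$ yields the local spectral equivalence $\int_K b_K \xi_h^2 \gtrsim \|\xi_h\|_{0,K}^2$, with constants depending only on the shape-regularity of $\Ch$; summing over $K$ gives
\[
    \langle w_h, \xi_h\rangle = \sum_{K\in\Ch} h_K^2 \int_K b_K \xi_h^2 \gtrsim \sum_{K\in\Ch} h_K^2\|\xi_h\|_{0,K}^2 = \|\xi_h\|_{-1,h}^2 .
\]
For the norm, an inverse inequality gives $|b_K\xi_h|_{1,K} \lesssim h_K^{-1}\|\xi_h\|_{0,K}$ and $\|b_K\xi_h\|_{0,K}\le\|\xi_h\|_{0,K}$, whence $\|w_h\|_{1,K}^2 \lesssim h_K^2\|\xi_h\|_{0,K}^2$ elementwise, and summing yields $\|w_h\|_1 \lesssim \|\xi_h\|_{-1,h}$. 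Combining the two estimates establishes \eqref{hstab}, and Lemma~\ref{lem:discstab} then delivers the Babu\v{s}ka--Brezzi condition \eqref{femstab}. The only delicate step is the membership $w_h\in V_h$; the spectral equivalence, the inverse estimate, and the uniformity of constants under shape-regularity are standard.
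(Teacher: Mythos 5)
Your proposal is correct and follows essentially the same route as the paper: the identical bubble candidate $w_h|_K = h_K^2\, b_K\, \xi_h|_K$, the same two local estimates (spectral equivalence of $b_K$ and an inverse inequality) yielding stability in the $\|\cdot\|_{-1,h}$ norm, and the same appeal to Lemma~\ref{lem:discstab} to pass to the continuous norm. Your explicit verification that $w_h \in V_h$ via the splitting $\xi_h|_K = p + r$ with $b_K p \in B_{k+1}(K)$ and $b_K r \in P_k(K)$ is a detail the paper leaves implicit, and it is a welcome addition.
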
 
\begin{proof}
We begin by showing stability in the discrete norm $\|\cdot\|_{-1,h}$ and then apply Lemma~\ref{lem:discstab} to get the stability in the continuous norm. Given $\xi_h \in Q_h$, we can define $w_h \in V_h$ by
\begin{equation}
    \label{eq:stabconst}
    w_h|_K=b_K h_K^2 \xi_h|_K.
\end{equation}
Then we estimate 
\begin{equation}
    \label{lem13:part1}
    \begin{aligned}
        \langle w_h, \xi_h \rangle &= \sum_{K \in \mathcal{C}_h} ( w_h, \xi_h )_K = \sum_{K \in \mathcal{C}_h} \int_K b_K h_K^2 \xi_h^2 \,\mathrm{d}x \gtrsim \| \xi_h \|_{-1,h}^2.
    \end{aligned}
\end{equation}
Moreover, using the inverse inequality and the definition \eqref{eq:stabconst} we get
\begin{equation}
    \label{lem13:part2}
    \begin{aligned}
            \|w_h\|_1^2 & \lesssim \sum_{K \in \mathcal{C}_h} h_K^{-2} \|w_h\|_{0,K}^2 \lesssim \sum_{K \in \mathcal{C}_h} h_K^2 \|\xi_h\|_{0,K}^2= \|\xi_h\|_{-1,h}^2.
    \end{aligned}
\end{equation}
Combining estimates \eqref{lem13:part1} and \eqref{lem13:part2} proves stability in the discrete norm. Finally, we apply Lemma~\ref{lem:discstab} to conclude the result.
\end{proof}

\begin{lemma} \label{lem:m1inverse}The following inverse estimate holds
$$ \|\xi_h\|_{-1,h}\lesssim \|\xi_h\|_{-1} \quad \forall \xi_{h}\in Q_{h}.$$
\end{lemma}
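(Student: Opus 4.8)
The plan is to exploit the asymmetry between the two norms: the continuous negative norm $\|\xi_h\|_{-1}$ is a supremum over all of $V$, whereas the discrete norm $\|\xi_h\|_{-1,h}$ is computed elementwise. Since $V_h \subset V$, any single admissible competitor from $V_h$ inserted into the supremum defining $\|\xi_h\|_{-1}$ already furnishes a lower bound for it. The idea is thus to reuse the bubble-based test function constructed in the proof of the preceding lemma and to observe that it simultaneously makes the duality pairing large (of order $\|\xi_h\|_{-1,h}^2$) while keeping its $H^1$-norm small (of order $\|\xi_h\|_{-1,h}$), so their quotient controls $\|\xi_h\|_{-1,h}$ from below.

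Concretely, given $\xi_h \in Q_h$ (the case $\xi_h=0$ being trivial), I would take $w_h \in V_h$ defined by $w_h|_K = b_K h_K^2 \xi_h|_K$ exactly as in \eqref{eq:stabconst}. Because $\xi_h$ is piecewise polynomial it lies in $L^2(\Omega)$, so the duality pairing reduces to the $L^2$-inner product, and the two estimates already established in \eqref{lem13:part1} and \eqref{lem13:part2} yield $\langle w_h, \xi_h \rangle \gtrsim \|\xi_h\|_{-1,h}^2$ and $\|w_h\|_1 \lesssim \|\xi_h\|_{-1,h}$.

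Using $w_h$ as a test function in the supremum and invoking these two bounds, I would estimate
\begin{equation*}
  \|\xi_h\|_{-1} = \sup_{v \in V} \frac{\langle v, \xi_h \rangle}{\|v\|_1} \geq \frac{\langle w_h, \xi_h \rangle}{\|w_h\|_1} \gtrsim \frac{\|\xi_h\|_{-1,h}^2}{\|\xi_h\|_{-1,h}} = \|\xi_h\|_{-1,h},
\end{equation*}
which is precisely the claimed inverse estimate after rearrangement.

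I do not anticipate a genuine obstacle here: the substantive work—constructing the bubble function and bounding both the pairing from below and the norm $\|w_h\|_1$ from above—has already been carried out in the stability lemma. The only conceptual point is recognizing that this same $w_h$ is a legitimate competitor in the continuous supremum precisely because $V_h \subset V$; once that is noted, the chain of inequalities above is immediate and no new computation is required.
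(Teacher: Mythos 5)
Your proposal is correct and is essentially the paper's own argument: the paper cites the mesh-dependent inf-sup bound $\sup_{w_h\in V_h}\langle w_h,\xi_h\rangle/\|w_h\|_1 \gtrsim \|\xi_h\|_{-1,h}$ established via the bubble construction \eqref{eq:stabconst} and combines it with the duality bound $|\langle w_h,\xi_h\rangle| \lesssim \|w_h\|_1\|\xi_h\|_{-1}$, which is exactly your chain of inequalities with the supremum over $V_h$ unpacked into the explicit test function $w_h|_K = b_K h_K^2\,\xi_h|_K$. Your observation that $w_h$ is an admissible competitor in the continuous supremum because $V_h\subset V$ is precisely the content of the paper's final step, so no genuinely new route is taken.
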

\begin{proof} In the preceeding lemma, we showed that
$$
\sup_{w_{h}\in V_{h}}\frac{\langle w_{h}, \xi_{h}\rangle   }{\Vert w_{h}\Vert_{1}}\gtrsim\Vert \xi_{h}\Vert _{-1,h} \quad \forall \xi_{h}\in Q_{h}. $$
The assertion thus follows from the fact that
$$ \vert \langle w_{h}, \xi_{h}\rangle  \vert  \lesssim 
\Vert w_{h}\Vert_{1}\Vert \xi_{h}\Vert _{-1}.
$$
\end{proof}
\begin{remark} {\rm Note that the above inverse inequality is valid in an arbitrary piecewise polynomial finite element space $\Lambda_{h}$, since one can always use the bubble function technique to construct a space $V_{h}$ in which the discrete stability inequality is valid. }
\end{remark}

 The a priori error estimate now follows from the discrete stability estimate of Theorem \ref{dsmixed}.
\begin{theorem}
The following error estimate holds
\[
\|u-u_h\|_1+\| \lambda -\lambda_h\|_{-1} \lesssim \inf_{v_h \in V_h} \| u-v_h\|_1 +
\inf_{\mu_h\in\Lambda_h} \big( \| \lambda-\mu_h\|_{-1} + \sqrt{ \langle u-g, \mu_h\rangle}\, \big)\,.
\]
\label{thm:mixedapriori}
\end{theorem}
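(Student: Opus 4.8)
The plan is to follow the standard saddle-point strategy, reducing quasi-optimality to the discrete stability of Theorem~\ref{dsmixed}, with the variational inequality contributing one extra consistency term. Fix arbitrary $v_h\in V_h$ and $\mu_h\in\Lambda_h$ and split the error by the triangle inequality,
\[
\|u-u_h\|_1+\|\lambda-\lambda_h\|_{-1}\le\big(\|u-v_h\|_1+\|\lambda-\mu_h\|_{-1}\big)+\big(\|v_h-u_h\|_1+\|\mu_h-\lambda_h\|_{-1}\big),
\]
so that it suffices to bound the discrete quantity $S:=\|v_h-u_h\|_1+\|\mu_h-\lambda_h\|_{-1}$, noting that $\mu_h-\lambda_h\in Q_h$.

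To this end I would apply Theorem~\ref{dsmixed} to the discrete pair $(u_h-v_h,\lambda_h-\mu_h)\in V_h\times Q_h$, producing $w_h\in V_h$ with $\|w_h\|_1\lesssim S$ and
\[
S^2\lesssim\mathcal{B}\big(u_h-v_h,\lambda_h-\mu_h;\,w_h,-(\lambda_h-\mu_h)\big).
\]
Writing $\epsilon_h:=\lambda_h-\mu_h$ and inserting the exact solution, I split the right-hand side by linearity in the first argument as $\mathcal{B}(u-v_h,\lambda-\mu_h;w_h,-\epsilon_h)+\mathcal{B}(u_h-u,\lambda_h-\lambda;w_h,-\epsilon_h)$. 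The first (approximation) term is controlled by the continuity of $\mathcal{B}$: each of its three contributions is estimated by Cauchy--Schwarz and by the definition \eqref{negnorm} of the negative norm, giving a bound $\lesssim\big(\|u-v_h\|_1+\|\lambda-\mu_h\|_{-1}\big)\big(\|w_h\|_1+\|\epsilon_h\|_{-1}\big)\lesssim\big(\|u-v_h\|_1+\|\lambda-\mu_h\|_{-1}\big)\,S$.

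The crux is the second (consistency) term. Here I would bound $\mathcal{B}(u_h,\lambda_h;w_h,-\epsilon_h)$ from above by $\mathcal{L}(w_h,-\epsilon_h)$ using the discrete inequality \eqref{mixedm} with test pair $(w_h,\mu_h)$ (recall $-\epsilon_h=\mu_h-\lambda_h$), while evaluating $\mathcal{B}(u,\lambda;w_h,-\epsilon_h)$ exactly via the first equation of the continuous problem \eqref{dualvi}, which gives $(\nabla u,\nabla w_h)-\langle w_h,\lambda\rangle=(f,w_h)$. After cancellation of the $(f,w_h)$ terms these combine to $\langle g-u,\epsilon_h\rangle=-\langle u-g,\lambda_h\rangle+\langle u-g,\mu_h\rangle$. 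Now the two sign conditions finish the argument: since $u\ge g$ in $\Omega$ and $\lambda_h\in\Lambda_h\subset\Lambda$ we have $\langle u-g,\lambda_h\rangle\ge0$, so the consistency term is bounded by $\langle u-g,\mu_h\rangle$, itself nonnegative (again because $\mu_h\in\Lambda$). This is the step I expect to be the main obstacle: one must orient every inequality correctly so that the uncontrollable multiplier $\lambda_h$ drops out and only the computable penalty $\langle u-g,\mu_h\rangle$ survives.

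Collecting the two bounds yields $S^2\lesssim\langle u-g,\mu_h\rangle+\big(\|u-v_h\|_1+\|\lambda-\mu_h\|_{-1}\big)\,S$, and a Young inequality absorbs the trailing factor of $S$ into the left-hand side, leaving $S\lesssim\sqrt{\langle u-g,\mu_h\rangle}+\|u-v_h\|_1+\|\lambda-\mu_h\|_{-1}$. Substituting back into the triangle inequality and taking the infimum over $v_h\in V_h$ and, independently, over $\mu_h\in\Lambda_h$ gives the claimed estimate.
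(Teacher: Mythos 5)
Your proposal is correct and follows essentially the same route as the paper: discrete stability (Theorem~\ref{dsmixed}) applied to the pair $(u_h-v_h,\lambda_h-\mu_h)$, the discrete inequality \eqref{mixedm} tested with $(w_h,\mu_h)$, exact evaluation of $\mathcal{B}(u,\lambda;w_h,\mu_h-\lambda_h)$ through the continuous equation, then Young's inequality and the triangle inequality. The only cosmetic difference is the sign argument: you discard $-\langle u-g,\lambda_h\rangle\le 0$ directly from $u\ge g$ and $\lambda_h\in\Lambda_h\subset\Lambda$, whereas the paper first bounds $\langle u-g,\mu_h-\lambda_h\rangle\le\langle u-g,\mu_h-\lambda\rangle$ via the continuous variational inequality and then uses the complementarity $\langle u-g,\lambda\rangle=0$ --- the two are equivalent.
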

\begin{proof}
\label{subsec:22}
 Let  $(v_h,\mu_h)\in V_h\times \Lambda_h$ be arbitrary. In view of  the stability estimate, there exists $w_h\in V_h$ such that
\begin{equation}
\|w_h\|_1 \lesssim \|u_h-v_h\|_1 +\|\lambda_h-\mu_h\|_{-1}
\label{wbound}
\end{equation}
and
\begin{equation}
\big(\|u_h-v_h\|_1+\|\lambda_h-\mu_h\|_{-1}\big)^2 \, \lesssim \mathcal{B}(u_h-v_h,\lambda_h-\mu_h;w_h,\mu_h-\lambda_h)
  .
 \label{bapriori1}
 \end{equation}
Given the discrete problem statement and the bilinearity of $\mathcal{B}$, by adding and subtracting $\mathcal{B}(u,\lambda;w_h,\mu_h-\lambda_h)$, we obtain
\begin{equation}
    \label{thm35:bound}
    \begin{aligned}
        &\mathcal{B}(u_h-v_h,\lambda_h-\mu_h;w_h,\mu_h-\lambda_h) \\
        &= \mathcal{B}(u_h,\lambda_h;w_h,\mu_h-\lambda_h) - \mathcal{B}(v_h,\mu_h;w_h,\mu_h-\lambda_h) \\
        &\leq
        \mathcal{B}(u-v_h,\lambda-\mu_h;w_h,\mu_h-\lambda_h)
        +\mathcal{L}(w_h,\mu_h-\lambda_h)
        -\mathcal{B}(u,\lambda;w_h,\mu_h-\lambda_h)\\
        &=\mathcal{B}(u-v_h,\lambda-\mu_h;w_h,\mu_h-\lambda_h) + \langle u-g,\mu_h-\lambda_h\rangle \\
        &\leq \mathcal{B}(u-v_h,\lambda-\mu_h;w_h,\mu_h-\lambda_h) + \langle u-g,\mu_h-\lambda \rangle\, ,
    \end{aligned}
\end{equation}
where in the last two lines we have used the continuous problem, i.e.
\[
    0 \leq \langle u-g, \lambda_h - \lambda \rangle.
\]

The continuity of the bilinear form $\mathcal{B}$ and the bound \eqref{wbound}  imply
\begin{equation}
    \label{thm35:bound2}
    \begin{aligned}
        &\mathcal{B}(u-v_h,\lambda-\mu_h;w_h,\mu_h-\lambda_h) \\
        \phantom{=}&\lesssim \big( \|u-v_h\|_1 + \|\lambda-\mu_h\|_{-1}\big) \,  \big( \|u_h-v_h\|_1+\|\lambda_h-\mu_h\|_{-1}\big).
    \end{aligned}
\end{equation}
Combining the estimates \eqref{bapriori1}, \eqref{thm35:bound} and \eqref{thm35:bound2} gives
\begin{align*}
    &\left(\|u_h-v_h\|_1 + \|\lambda_h-\mu_h\|_{-1}\right)^2 \\
    &\lesssim (\|u-v_h\|_1+\|\lambda-\mu_h\|_{-1})(\|u_h-v_h\|_1+\|\lambda_h-\mu_h\|_{-1})+\langle u-g, \mu_h-\lambda \rangle.
\end{align*}
Applying Young's inequality to the first term on the right hand side and completing the square gives
\[
\|u_h-v_h\|_1+\|\lambda_h-\mu_h\|_{-1} \lesssim \| u-v_h\|_1 +
\| \lambda-\mu_h\|_{-1} + \sqrt{\, \langle u-g,\mu_h-\lambda\rangle}.
\]
Since $\langle u-g, \lambda\rangle=0$, the triangle inequality yields
\begin{align*}
    \|u-u_h\|_1 + \|\lambda-\lambda_h\|_{-1} &\leq \|u-v_h\|_1 + \|u_h-v_h\|_1+\|\lambda-\mu_h\|_{-1}+\|\lambda_h-\mu_h\|_{-1} \\
                                             &\lesssim \inf_{v_h \in V_h} \|u-v_h\|_1 + \inf_{\mu_h \in \Lambda_h}\!\!\left(\|\lambda-\mu_h\|_{-1}+\sqrt{\langle u-g,\mu_h\rangle}\right)\!.
\end{align*}
\end{proof}
\begin{remark} This error estimate is known in the literature, cf. \cite[Theorem 5, Remark 3]{Has81} and  \cite[Lemma 4.3, Remark 4.9]{HHN}. However,  we perform  the analysis  more in the spirit of Babu\v{s}ka \cite{B70/71,B73} than that of Brezzi \cite{Br74} which seems to be the common approach.
\end{remark}

Next we derive the a posteriori estimate. We define the local error 
estimators $\eta_K$ and $\eta_E$ by
\begin{align*}
    \eta_K^2 &= h_K^2\, \|\Delta u_h+\lambda_h+f\|_{0,K}^2,\\ 
    \eta_E^2&= h_E\, \| \llbracket \nabla u_h \cdot {\bm n}\rrbracket  \|_{0,E}^2 .
\end{align*}
Further, we define
\begin{align*}
    \eta^2 &= \sum_{K\in\mathcal{C}_h} \eta_K^2 +\sum_{E\in\mathcal{E}_h} \eta_E^2,  \\
    S_m&= \|(g-u_h)_+\|_{1} + \sqrt{\langle (g-u_h)_+,\lambda_h\rangle},
\end{align*}
where $w_+=\max\{w,0\}$ denotes the positive part of $w$.
\begin{theorem} The following a posteriori estimate holds
\[
\|u-u_h\|_1+\|\lambda-\lambda_h\|_{-1}  \lesssim \eta +  S_m.
\]
\label{thm:mixedaposteriori}
\end{theorem}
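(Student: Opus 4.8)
The plan is to apply the continuous stability of Theorem~\ref{contstab} directly to the error pair $(u-u_h,\lambda-\lambda_h)\in V\times Q$. This furnishes a function $w\in V$ with
\[
\big(\|u-u_h\|_1+\|\lambda-\lambda_h\|_{-1}\big)^2 \lesssim \B(u-u_h,\lambda-\lambda_h;w,-(\lambda-\lambda_h)), \qquad \|w\|_1\lesssim \|u-u_h\|_1+\|\lambda-\lambda_h\|_{-1}.
\]
Expanding the bilinear form, the right-hand side splits into a \emph{momentum residual} $(\nabla(u-u_h),\nabla w)-\langle w,\lambda-\lambda_h\rangle$ and a \emph{constraint term} $\langle u-u_h,\lambda-\lambda_h\rangle$, which I would estimate separately and then recombine through Young's inequality, absorbing the error norms that reappear on the right.

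For the momentum residual I would first note that testing the discrete inequality \eqref{mixedm} with $\mu_h=\lambda_h$ and $\pm v_h$ yields the exact discrete equation $(\nabla u_h,\nabla v_h)-\langle v_h,\lambda_h\rangle=(f,v_h)$ for all $v_h\in V_h$. Using the continuous momentum equation to rewrite the residual as $(f,w)-(\nabla u_h,\nabla w)+\langle w,\lambda_h\rangle$ and subtracting its (vanishing) value at the Cl\'ement interpolant $w_h\in V_h$ of $w$, an element-wise integration by parts produces the interior residuals $\Delta u_h+\lambda_h+f$ and the normal-flux jumps $\llbracket\nabla u_h\cdot{\bm n}\rrbracket$. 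The standard Cl\'ement interpolation estimates $\|w-w_h\|_{0,K}\lesssim h_K|w|_{1,\widetilde K}$ and $\|w-w_h\|_{0,E}\lesssim h_E^{1/2}|w|_{1,\widetilde E}$ then give the bound $\eta\,\|w\|_1$, exactly as in the a posteriori analysis of linear elliptic problems.

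The delicate part, and the main obstacle, is the constraint term $\langle u-u_h,\lambda-\lambda_h\rangle$, where the inequality structure must be exploited. Here I would combine the continuous complementarity $u\ge g$, $\lambda\ge 0$, $\langle u-g,\lambda\rangle=0$ with its discrete counterpart: testing \eqref{mixedm} with $v_h=0$ and $\mu_h=0,\,2\lambda_h$ gives $\langle u_h-g,\lambda_h\rangle=0$. Splitting $\langle u-u_h,\lambda-\lambda_h\rangle=\langle u-u_h,\lambda\rangle-\langle u-u_h,\lambda_h\rangle$ and using these relations, the second piece equals $-\langle u-g,\lambda_h\rangle\le 0$, while the first reduces to $\langle g-u_h,\lambda\rangle$; writing $g-u_h=(g-u_h)_+-(u_h-g)_+$ and discarding the non-negative contribution $\langle(u_h-g)_+,\lambda\rangle$ bounds it by $\langle(g-u_h)_+,\lambda\rangle$. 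Finally, decomposing $\lambda=(\lambda-\lambda_h)+\lambda_h$ controls this by $\|(g-u_h)_+\|_1\|\lambda-\lambda_h\|_{-1}+\langle(g-u_h)_+,\lambda_h\rangle\le S_m\|\lambda-\lambda_h\|_{-1}+S_m^2$. Collecting the two estimates gives $\mathcal E^2\lesssim(\eta+S_m)\mathcal E+S_m^2$ for $\mathcal E=\|u-u_h\|_1+\|\lambda-\lambda_h\|_{-1}$, and a final application of Young's inequality to absorb $\mathcal E$ yields the claim.
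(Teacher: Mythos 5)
Your proposal is correct and follows essentially the same route as the paper's proof: continuous stability (Theorem~\ref{contstab}) applied to the error pair, Cl\'ement interpolation with elementwise integration by parts yielding $\eta$, the complementarity relations $\langle u-g,\lambda\rangle=0$ and $\langle u_h-g,\lambda_h\rangle=0$ together with the positive-part splitting yielding $S_m$, and a final absorption via Young's inequality. The only difference is organizational---you split $\mathcal{B}$ into a momentum residual and a constraint term and use the equality form of the discrete momentum equation, whereas the paper manipulates the forms $\mathcal{B}$, $\mathcal{L}$ directly through one-sided inequalities---but the resulting estimates are term-for-term the same.
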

\begin{proof}
    By the stability of the continuous problem (Theorem~\ref{contstab}), there exists $w\in V$ such that
\begin{equation}
\|w\|_1 \lesssim \|u-u_h\|_1+\|\lambda-\lambda_h\|_{-1}
\label{postnorm}
\end{equation}
and 
\begin{equation}
\big( \|u-u_h\|_1+\|\lambda-\lambda_h\|_{-1} \big)^{2} \lesssim \mathcal{B}(u-u_h,\lambda-\lambda_h;w,\lambda_h-\lambda)
 .
 \label{bapost1}
 \end{equation}
 Let $\widetilde{w}\in V_h$ be the Cl\'ement interpolant of $w$. The problem statement gives
\[
    0 \leq -\mathcal{B}(u_h,\lambda_h;-\widetilde{w},0)+\mathcal{L}(-\widetilde{w},0).
\]
It follows that
\begin{align*}
    &\big( \|u-u_h\|_1+\|\lambda-\lambda_h\|_{-1}\big) ^{2} \\
&\lesssim 
  \mathcal{B}(u,\lambda;w,\lambda_h-\lambda)-  \mathcal{B}(u_h,\lambda_h;w,\lambda_h-\lambda)\\ 
  &\lesssim
  \mathcal{B}(u,\lambda;w,\lambda_h-\lambda)-  \mathcal{B}(u_h,\lambda_h;w,\lambda_h-\lambda)-
     \, \mathcal{B}(u_h,\lambda_h;-\widetilde{w},0)+\mathcal{L}(-\widetilde{w},0) \\
  &\lesssim  \mathcal{L}(w,\lambda_h-\lambda)  -
    \mathcal{B}(u_h,\lambda_h;w-\widetilde{w},\lambda_h-\lambda)+\mathcal{L}(-\widetilde{w},0) 
    \\ 
  &\lesssim  \mathcal{L}(w-\widetilde{w},\lambda_h-\lambda)-\mathcal{B}(u_h,\lambda_h;w-\widetilde{w},\lambda_h-\lambda).
\end{align*}
Opening up the right hand side and combining terms, results in
\begin{align*}
    &\left(\|u-u_h\|_1+\|\lambda-\lambda_h\|_{-1}\right)^2 \\
    &\lesssim (f,w-\widetilde{w})-\langle g,\lambda_h-\lambda \rangle - (\nabla u_h, \nabla (w-\widetilde{w}))+\langle w-\widetilde{w} ,  \lambda_h \rangle + \langle u_h, \lambda_h -\lambda \rangle \\
    &= \sum_{K \in \Ch} (\Delta u_h+\lambda_h+f,w-\widetilde{w})_K-\sum_{E \in \Eh}(\llbracket \nabla u_h \cdot \boldsymbol{n} \rrbracket, w-\widetilde{w})_E+\langle u_h-g,\lambda_h-\lambda \rangle.
\end{align*}
The first two terms are estimated as usual; recall that the Cl\'ement interpolant satisfies
\[
\Big( \sum_{K\in\mathcal{C}_h} h_K^{-2} \| w-\widetilde{w}\|^2_{0,K} +\sum_{E\in\mathcal{E}_h} h_E^{-1}\|w-\widetilde{w}\|_{0,E}^2\Big)^{\frac12}\lesssim \|w\|_1 .
\] 
 The last term is bounded as follows:
\begin{align*}
    \langle u_h-g, \lambda_h-\lambda \rangle &= \langle g-u_h, \lambda \rangle \\
                                             &\leq \langle (g-u_h)_+,\lambda \rangle\\
                                             &= \langle (g-u_h)_+,\lambda-\lambda_h \rangle + \langle (g-u_h)_+, \lambda_h \rangle\\
                                             &\leq \|(g-u_h)_+\|_1 \|\lambda-\lambda_h\|_{-1} +\langle (g-u_h)_+, \lambda_h \rangle.
\end{align*}
\end{proof}
 
 To derive lower bounds,   
 let $f_h\in Q_h$ be the $L^2$-projection of $f$ and define 
\begin{align}
\osckf&=h_K\Vert f-f_h\Vert_{0,K}\ \mbox{ and }
\\
\oscf^2&=\sum_{K\in \Ch}\osckf^2.
\end{align}

A function $ w\in H^{1}_{0}(U)$, with $U\subset \Omega$, we extend by zero into $\Omega\setminus U$, and and for functions in $\mu\in \Lambda$ we then define
\begin{equation}
\Vert \mu \Vert _{-1,U}=\sup_{v\in H_{0}^{1}(U)}\frac{\langle  v,\mu\rangle}{\Vert v \Vert_{1,U} }.
\end{equation}

We   let $\omega(E)$ be the union of the two elements sharing $E$.

\begin{lemma}
\label{lowbound}
    For all $v_h\in V_h$ and $\mu_h\in Q _h$, it holds
\begin{align} 
\label{a0}
&h_{K}\Vert \Delta v_{h}+\mu_{h} + f  \Vert_{0,K}\lesssim 
  \Vert u-v_{h}\Vert_{1,K} + \Vert \lambda-\mu_{h}\Vert_{-1,K}+\osckf,
\\
\label{b0}
&\Big(\sum_{K\in \Ch}h_{K}^{2}\Vert \Delta v_{h}+\mu_{h} + f  \Vert_{0,K}^{2}\Big)^{\frac12}
 \lesssim \Vert  u-v_{h}\Vert_{ 1}+ \Vert \lambda -\mu_{h} \Vert_{-1}+\oscf,
\\ \label{d12} & 
h_E^{1/2}\, \| \llbracket \nabla v_h \cdot {\bm n}\rrbracket  \|_{0,E} \lesssim 
\Vert u-v_{h}\Vert_{1,\omega(E)} + \Vert \lambda-\mu_{h}\Vert_{-1,\omega(E)}+\sum_{K\subset \omega(E)}\osckf,
\\  
 & 
 \Big(\sum_{E \in \Eh }h_E \| \llbracket \nabla v_h \cdot {\bm n}\rrbracket  \|_{0,E}^{2}\Big)^{\frac12} \lesssim 
 \Vert u-v_{h}\Vert_{1 } + \Vert \lambda-\mu_{h}\Vert_{-1 }+ \oscf.
\label{c0}
\end{align}
\end{lemma}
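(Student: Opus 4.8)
These four estimates are the efficiency (lower) bounds of residual a posteriori analysis, and the plan is to follow Verf\"urth's bubble-function technique, adapted to the fact that the multiplier error is measured in the (local) negative norm rather than in $L^2$. The starting point is the residual identity: testing the first equation of \eqref{dualvi} against functions in $H^1_0$ gives, for any $w\in H^1_0(K)$ extended by zero, $(\nabla u,\nabla w)_K=(f,w)_K+\langle w,\lambda\rangle$, which heuristically expresses $\Delta v_h+\mu_h+f=\Delta(v_h-u)+(\mu_h-\lambda)$ weakly and decomposes each computable residual into the gradient error and the multiplier error. These are precisely the quantities on the right-hand sides of \eqref{a0}--\eqref{c0}.

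To prove \eqref{a0} I would first replace $f$ by its projection $f_h$, so that $r_K:=\Delta v_h+\mu_h+f_h$ is a polynomial on $K$, and test with $w=b_Kr_K\in H^1_0(K)$, where $b_K$ is the element bubble. The only tools needed are the equivalence $\|r_K\|_{0,K}^2\lesssim(r_K,b_Kr_K)_K$ together with the inverse estimates $\|b_Kr_K\|_{0,K}\lesssim\|r_K\|_{0,K}$ and $\|w\|_{1,K}\lesssim h_K^{-1}\|r_K\|_{0,K}$. Using the variational identity above and integrating the Laplacian term by parts (legitimate since $w$ vanishes on $\partial K$) yields
\[
(r_K,w)_K=-(\nabla(v_h-u),\nabla w)_K+\langle w,\mu_h-\lambda\rangle+(f_h-f,w)_K.
\]
The first term is bounded by $\|v_h-u\|_{1,K}\,h_K^{-1}\|r_K\|_{0,K}$, and the last produces the oscillation. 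The crucial second term is where the negative norm enters: since the zero-extension of $w$ lies in $H^1_0(\Omega)$, the duality pairing obeys $\langle w,\mu_h-\lambda\rangle\le\|\lambda-\mu_h\|_{-1,K}\|w\|_{1,K}\lesssim\|\lambda-\mu_h\|_{-1,K}\,h_K^{-1}\|r_K\|_{0,K}$. Dividing by $\|r_K\|_{0,K}$, multiplying by $h_K$, and adding the triangle-inequality correction $\osckf$ to pass from $r_K$ back to $\Delta v_h+\mu_h+f$ gives \eqref{a0}.

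For the jump bound \eqref{d12} I would use the edge bubble $b_E$ supported on $\omega(E)$ and test with $w_E=b_E\widetilde{J}_E$, where $\widetilde{J}_E$ is a polynomial extension of the jump $J_E=\llbracket\nabla v_h\cdot{\bm n}\rrbracket$, with the standard scalings $\|w_E\|_{0,\omega(E)}\lesssim h_E^{1/2}\|J_E\|_{0,E}$ and $\|w_E\|_{1,\omega(E)}\lesssim h_E^{-1/2}\|J_E\|_{0,E}$. Integrating $(\nabla v_h,\nabla w_E)_{\omega(E)}$ by parts elementwise collects exactly the jump over $E$, and inserting the variational identity produces
\[
\|J_E\|_{0,E}^2\lesssim(J_E,w_E)_E=(\nabla(v_h-u),\nabla w_E)_{\omega(E)}+(\Delta v_h+\mu_h+f,w_E)_{\omega(E)}+\langle w_E,\lambda-\mu_h\rangle.
\]
Bounding the three terms by $h_E^{-1/2}\|v_h-u\|_{1,\omega(E)}$, by the already-established element bound \eqref{a0} (after $h_E^{1/2}$ scaling and a patchwise Cauchy--Schwarz), and by $h_E^{-1/2}\|\lambda-\mu_h\|_{-1,\omega(E)}$ respectively, then dividing out and multiplying by $h_E^{1/2}$, yields \eqref{d12}.

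Finally, \eqref{b0} and \eqref{c0} follow by squaring and summing \eqref{a0} and \eqref{d12}; the element-norm and oscillation contributions sum directly (the latter with bounded overlap of the patches). The step deserving care---and which I expect to be the main obstacle---is controlling the sums of the \emph{local} negative norms by the \emph{global} one, i.e.\ establishing $\sum_{K\in\Ch}\|\lambda-\mu_h\|_{-1,K}^2\lesssim\|\lambda-\mu_h\|_{-1}^2$ and likewise over the patches. For disjoint elements this is clean: taking near-optimal test functions $v_K\in H^1_0(K)$ and combining them as $v=\sum_K\|\lambda-\mu_h\|_{-1,K}\,v_K$, the disjoint supports make the $H^1$-norms add in quadrature and immediately give $\big(\sum_K\|\lambda-\mu_h\|_{-1,K}^2\big)^{1/2}\le\|\lambda-\mu_h\|_{-1}$. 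For the overlapping patches $\omega(E)$ the same argument applies after a finite coloring of the edges into classes whose patches are mutually disjoint, the number of colors being bounded by shape-regularity. This local-to-global control of the negative norm, absent from the classical $L^2$-based efficiency proofs, is the only genuinely new ingredient.
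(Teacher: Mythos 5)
Your proposal is correct, and for the local estimates \eqref{a0} and \eqref{d12} it follows essentially the same path as the paper: element and edge bubbles, the residual identity obtained by testing the first equation of \eqref{dualvi} with the zero-extended bubble test function, and the key observation that this test function lies in $H^1_0(K)$ (resp.\ $H^1_0(\omega(E))$), so that the multiplier term can be paired against the \emph{local} negative norm. Where you genuinely diverge is in the passage to the global bounds \eqref{b0} and \eqref{c0}. The paper never squares and sums the local estimates: it sums the residual identities themselves, assembling a single global test function $\gamma=\sum_{K}\gamma_K$ (resp.\ $\sum_{E}h_E w_E$) whose $H^1$-norm is controlled through disjointness/bounded overlap of the supports, and then invokes the global norm $\|\lambda-\mu_h\|_{-1}$ just once, in a single duality pairing. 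You instead sum the squared local estimates, which forces you to establish the localization property $\sum_{K\in\Ch}\|\lambda-\mu_h\|_{-1,K}^2\le\|\lambda-\mu_h\|_{-1}^2$, together with its patch analogue; your aggregation-of-test-functions argument for this is valid (indeed with constant one, since the element interiors are disjoint and the $H^1$-norms of the combined test function add in quadrature), and the finite-coloring reduction for the overlapping patches $\omega(E)$ is sound because the conflict graph of edge patches has degree bounded under shape regularity. The two routes are essentially dual --- the paper aggregates test functions, you aggregate norms --- and both rest on the same disjoint-support orthogonality. Your version costs an extra lemma but produces a reusable local-to-global estimate for the $H^{-1}$-norm that makes \eqref{b0} and \eqref{c0} literal corollaries of \eqref{a0} and \eqref{d12}, exactly mirroring the classical $L^2$-based efficiency proofs; the paper's version is more economical, requiring only the bounded-overlap bound $\bigl\|\sum_{E\in\Eh}h_E \ve\bigr\|_1^2\lesssim\sum_{E\in\Eh}h_E\|\llbracket\nabla v_h\cdot{\bm n}\rrbracket\|_{0,E}^2$ and no separate negative-norm localization result.
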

\begin{proof} Using the bubble function $b_{K}\in P_{3}(K)\cap H^{1}_{0}(K)$, we define $\gamma_{K}$ by
$$\gamma_{K}=
h_K^2 b_{K}(\Delta v_{h}+\mu_{h} + f_{h}) \ \mbox{  in }K,\quad \text{and}\quad \gamma_{K}=0 ~~\mbox{~in~} \Omega \setminus K. $$
Testing with $\gamma_{K}$ in \eqref{dualvi}$_1$ yields
\begin{equation}
(\nabla u, \nabla \gamma_{K} )_{K} -\langle \gamma_{K}, \lambda \rangle =  (f,\gamma_{K})_{K}.
\end{equation}
Using this and the  norm equivalence in polynomial   spaces, we obtain
\begin{equation}\label{a1}
\begin{aligned}
    &h_K^2\Vert \Delta v_{h}+\mu_{h} + f_{h}\Vert_{0,K}^{2} \\
    &\lesssim h^2_K\Vert\sqrt{b_{K}}(\Delta v_{h}+\mu_{h} + f_{h})\Vert_{0,K}^{2}  \\  
  &=(\Delta v_{h}+\mu_{h} + f_{h}, \gamma_{K})_{K} \\
  &=(\Delta v_{h}+\mu_{h}, \gamma_{K})_{K} +
(f, \gamma_{K})_{K}+(  f_{h}-f, \gamma_{K})_{K} \\
&  =(\Delta v_{h}+\mu_{h}, \gamma_{K})_{K} +
(\nabla u, \nabla \gamma_{K} )_{K}-\langle \gamma_{K}, \lambda \rangle +(  f_{h}-f, \gamma_{K})_{K} \\
& = 
(\nabla (u-v_{h}), \nabla \gamma_{K} )_{K}+\langle \gamma_{K}, \mu_{h}-\lambda \rangle +(  f_{h}-f, \gamma_{K})_{K}.
\end{aligned}
\end{equation}
The right hand side above can be  estimated as follows
\begin{align*}\label{a2}
\nonumber
&(\nabla (u-v_{h}), \nabla \gamma_{K} )_{K}+\langle  \gamma_{K}, \mu_{h}-\lambda \rangle +(  f_{h}-f, \gamma_{K})_{K}
\\ 
& \leq \Vert \nabla (u-v_{h})\Vert_{0,K}   \Vert  \nabla \gamma_{K} \Vert _{0, K}
+\Vert \mu_{h}-\lambda \Vert_{-1,K} \Vert \gamma_{K}\Vert_{1,K}+\osckf h_{K}^{-1} \Vert \gamma_{K}\Vert_{0,K}.
 \nonumber
\end{align*}
By inverse inequalities, we have 
\begin{equation}\label{a3}
    \begin{aligned}
  \Vert \gamma_{K}\Vert_{1,K}^2& \lesssim  h_K^{-2} \Vert \gamma_{K} \Vert_{0,K}^2\\
                           &=  h^2_K\Vert  b_{K}(\Delta v_{h}+\mu_{h} + f_{h})\Vert_{0,K}^{2} \\
                           &\lesssim   h^2_K\Vert \Delta v_{h}+\mu_{h} + f_{h}\Vert_{0,K}^{2} .
    \end{aligned}
\end{equation}
Combining \eqref{a1}--\eqref{a3} gives the first estimate \eqref{a0}.

To prove \eqref{b0}, we write
 $\gamma=\sum_{K\in \Ch}\gamma_{K}$
and sum the inequality \eqref{a1} over all elements. This yields
\begin{equation}
    \begin{aligned}
\label{low1}
 &\sum_{K\in \Ch} h_K^2 \Vert\Delta v_{h}+\mu_{h} + f_{h}\Vert_{0,K}^{2}\\
 &\lesssim \sum_{K\in \Ch}\big\{ (\nabla (u-v_{h}) ,\nabla \gamma_{K} )_{K}+\langle  \gamma_{K}, \mu_{h}-\lambda \rangle +(  f_{h}-f, \gamma_{K})_{K}\big\} \\
 &=(\nabla (u-v_{h}) , \nabla \gamma)+\langle \gamma, \mu_{h}-\lambda \rangle +(  f_{h}-f, \gamma)\\
&\leq \Vert u-v_h \Vert_1\Vert \gamma\Vert_1 + \Vert  \mu_{h}-\lambda\Vert_{-1}\Vert \gamma\Vert_1 +\oscf \Big( \sum _{K\in \Ch} h_K^{-2} \Vert \gamma\Vert_{0,K}^2\Big)^{\frac12}.
\end{aligned}
\end{equation}
Summing estimates \eqref{a3}  over ${K\in \Ch}$,  leads to \eqref{b0}.

Next, let $b_{E}$ be the bubble on $E$, and denote $ \ve={b_{E}  \mathcal  X}_{E} ( \llbracket \nabla v_h \cdot {\bm n}\rrbracket)$, where ${\mathcal  X}_{E}$ is the standard extension operator onto  $ H_{0}^{1}(\omega(E))$ cf.~Braess~\cite{braess}. By scaling and Poincar\'e's inequality we have 
\begin{equation}\label{invc}
\begin{aligned}
  \| \llbracket \nabla v_h \cdot {\bm n}\rrbracket  \|_{0,E}&\approx \|\sqrt{b_{E}} \llbracket \nabla v_h \cdot {\bm n}\rrbracket  \|_{0,E} \approx h_{E}^{-1/2}\|  \ve  \|_{0,\omega(E)} 
   \\
   &\approx  h_{E}^{1/2}\|  \nabla  \ve  \|_{0,\omega(E)}\approx  h_{E}^{1/2}\|   \ve  \|_{1,\omega(E)}.
   \end{aligned}
\end{equation}
Using this, Green's formula, and the variational formulation \eqref{dualvi}, yield
\begin{equation}\label{f0}
\begin{aligned}
    &\| \llbracket \nabla v_h \cdot {\bm n}\rrbracket  \|_{0,E} ^{2} \\
    &\lesssim
 \|\sqrt{b_{E}} \llbracket \nabla v_h \cdot {\bm n}\rrbracket  \|_{0,E} ^{2}
= (  \llbracket \nabla v_h \cdot {\bm n}\rrbracket,    \ve)_{E}
\\&= (\Delta v_{h}, \ve)_{\omega(E)}+ (\nabla v_{h},\nabla  \ve)_{\omega(E)}
\\& = (\Delta v_{h}, \ve)_{\omega(E)}+ (\nabla (v_{h}-u),\nabla  \ve)_{\omega(E)}+
(\nabla u,\nabla  \ve)_{\omega(E)}
\\&= (\Delta v_{h}, \ve)_{\omega(E)}+ (\nabla (v_{h}-u),\nabla  \ve)_{\omega(E)}+
(f , \ve)_{\omega(E)}+\langle \lambda,  \ve\rangle
\\&= (\Delta v_{h}+\mu_{h}+f, \ve)_{\omega(E)}+ (\nabla (v_{h}-u),\nabla  \ve)_{\omega(E)}+
 \langle \lambda-\mu_{h},  \ve\rangle.
\end{aligned}
\end{equation}
Hence, it holds
\begin{equation}
\begin{aligned}  
\| \llbracket \nabla v_h \cdot {\bm n}\rrbracket  \|_{0,E} ^{2}&\lesssim   \| \Delta v_{h}+\mu_{h}+f \| _{0,\omega(E)}\|  \ve\|_{0,\omega(E)} \\
    &\phantom{=} + \|\nabla (u-v_{h})\| _{0,\omega(E)}\| \nabla  \ve\|_{0,\omega(E)}
    \\ &\phantom{=}   
+\| \lambda-\mu_{h} \|_{-1,\omega(E)} \|   \ve\| _{1,\omega(E)},
\end{aligned}
\end{equation}
and \eqref{invc} gives
\begin{equation}
    \begin{aligned}
        h_{E}^{1/2}\| \llbracket \nabla v_h \cdot {\bm n}\rrbracket  \|_{0,E}&\lesssim   h_{E}\| \Delta v_{h}+\mu_{h}+f \| _{0,\omega(E)}+ \|\nabla (u-v_{h})\| _{0,\omega(E)}\\
        &\phantom{=}+\| \lambda-\mu_{h} \|_{-1,\omega(E)} ,
    \end{aligned}
\end{equation}
and \eqref{d12}  follows from the already proved estimate \eqref{a0}.

In order to prove \eqref{c0}, we use \eqref{f0} to obtain
\begin{equation}\label{first}
\begin{aligned}
    \sum_{E\in \Eh}h_{E}\| \llbracket \nabla v_h \cdot {\bm n}\rrbracket  \|_{0,E} ^{2}&\lesssim 
\sum_{E\in \Eh}h_{E} (\Delta v_{h}+\mu_{h}+f, \ve)_{\omega(E)}
    \\&\phantom{=}+ \sum_{E\in \Eh}h_{E}(\nabla (v_{h}-u),\nabla  \ve)_{\omega(E)}\\
    &\phantom{=}+\sum_{E\in \Eh}h_{E}
 \langle \lambda-\mu_{h},  \ve\rangle.
\end{aligned}
\end{equation}
Since each $E\in \Eh$ contains two elements of $\Ch$,  we get from \eqref{invc}
 \begin{equation}
 \begin{aligned}
 \sum_{E\in \Eh}&h_{E}   (\Delta v_{h}+\mu_{h}+f, \ve)_{\omega(E)}
 \\
 &
 \leq 
  \Big(\sum_{E\in \Eh}h_{E}^{2} 
  \| \Delta v_{h}+\mu_{h}+f\|^{2}_{0,\omega(E)}\Big)^{\frac12}
\Big(\sum_{E\in \Eh}   h_{E}  ^{2} \|\ve\| _{0,\omega(E)} ^{2} \Big)^{\frac12}
  \\
  &
   \lesssim  \Big(\sum_{K\in \Ch}h_{K}^{2} \| \Delta v_{h}+\mu_{h}+f\| ^{2}_{0,K} \Big)^{\frac12}
\Big(\sum_{E\in \Eh}
h_{E}\| \llbracket \nabla v_h \cdot {\bm n}\rrbracket  \|_{0,E} ^{2} \Big)^{\frac12},
 \end{aligned}
 \end{equation}
 
  \begin{equation}
  \begin{aligned}
  \sum_{E\in \Eh}h_{E}&(\nabla (u-v_{h}),\nabla  \ve)_{\omega(E)},
 \\&   \leq 
  \Big(\sum_{E\in \Eh} 
  \| \nabla(u- v_{h})\|^{2}_{0,\omega(E)}\Big)^{\frac12}
\Big(\sum_{E\in \Eh} \| \nabla \ve\| _{\omega(E)} ^{2} \Big)^{\frac12}
  \\
  &
   \lesssim   \| \nabla(u- v_{h})\|_0
\Big(\sum_{E\in \Eh}
h_{E}\| \llbracket \nabla v_h \cdot {\bm n}\rrbracket  \|_{0,E} ^{2} \Big)^{\frac12},
  \end{aligned}
 \end{equation}
 and
  \begin{equation}\label{last}
  \begin{aligned}
  \sum_{E\in \Eh}h_{E}&
 \langle \ve, \lambda-\mu_{h} \rangle = \Big\langle  \sum_{E\in \Eh}h_{E}\ve,  \lambda-\mu_{h} \Big\rangle
\\
& \lesssim \|\lambda-\mu_{h}\| _{-1}   \Big\| \sum_{E\in \Eh}h_{E} \ve\Big\|_{1}
 \\&
 \lesssim \|\lambda-\mu_{h}\| _{-1}  \Big(\sum_{E\in \Eh}
h_{E}\| \llbracket \nabla v_h \cdot {\bm n}\rrbracket  \|_{0,E} ^{2} \Big)^{\frac12}
\end{aligned}
 \end{equation}
The asserted estimate now follows by combining \eqref{first}--\eqref{last} and \eqref{b0}.
\end{proof}

Choosing $v_{h}=u_{h}$ and $\mu_{h}=\lambda_{h}$ above, we obtain the local lower bounds
\begin{eqnarray}\eta_{K}&\lesssim& 
 \Vert u-u_{h}\Vert_{1,K} + \Vert \lambda-\lambda_{h}\Vert_{-1,K}+\osckf  ,
\\
  \eta_{E}&\lesssim& \Vert u-u_{h}\Vert_{1,\omega(E)} +\sum_{K\subset \omega(E)}\big(\Vert \lambda-\lambda_{h}\Vert_{-1,K}+ \osckf \big),
\end{eqnarray}
and the global bound
\begin{equation}
\eta \lesssim \|u-u_h\|_1+\| \lambda -\lambda_h\|_{-1} +\oscf.
\end{equation}
\begin{remark} These estimates  have appeared in the literature, cf. \cite{BS15,B05,BUSCH15}. For completeness, we have given a proof based on our approach.
\end{remark}
\begin{remark}
In proving Lemma \ref{lowbound}, we never used the fact that $(u_{h},\lambda_{h})$  solves the mixed problem. The estimates thus hold also  for the stabilized methods that will be presented in the next section. In fact, they turn out to be crucial for the a priori error analysis of these methods.
\end{remark}


\section{Stabilized methods}  From the Stokes problem, it is known that the technique of using stabilizing bubble degrees of freedom can be avoided by the so-called residual-based stabilizing \cite{HFB,SIAM}. Below we will show that this approach  applies also to the present problem. The resulting formulation, stability and error estimates are valid for any finite element pair $(V_{h}, \Lambda_{h})$.

Let us start by introducing the bilinear and linear forms $\mathcal{S}_h$ and $\mathcal{F}_h$ by
\begin{align*}
    \mathcal{S}_h(w,\xi;v,\mu) &= \sum_{K\in\mathcal{C}_h} h_K^2 (-\Delta w-\xi, -\Delta v-\mu)_K,
\\ 
\mathcal{F}_h(v,\mu)&=\sum_{K\in\mathcal{C}_h} h_K^2 (f, -\Delta v-\mu)_K,
\end{align*}
and then define the forms $\mathcal{B}_h$ and $\mathcal{L}_h$ through
\begingroup
\addtolength{\jot}{0.3em}
\begin{align*}
    \mathcal{B}_h(w,\xi;v,\mu)&= \mathcal{B}(w,\xi;v,\mu) - \alpha\, \mathcal{S}_h(w,\xi;v,\mu)  ,\\
    \mathcal{L}_h(v,\mu)&= \mathcal{L}(v,\mu) - \alpha \, \mathcal{F}_h(v,\mu)  ,
\end{align*}
\endgroup
where $\alpha>0$ is a stabilization parameter.

 Note that  with the assumption $f\in L^2(\Omega)$ it holds that  $\Delta u+  \lambda \in L^2(\Omega)$, even if 
  $\Delta u \not \in L^2(\Omega)$ and  $   \lambda \not\in L^2(\Omega)$. Hence it holds
 \begin{equation}
\mathcal{S}_h(u,\lambda;v_h,\mu_h) =  \mathcal{F}_h(v_h,\mu_h) \quad  \forall (v_h,\mu_h)\in V_h\times \Lambda_h .
\label{stermcons}
\end{equation}
This motivates the following
  stabilized finite element method.
  
  \textsc{The stabilized  method. }{\em
Find $(u_h,\lambda_h)\in V_h\times \Lambda_h$ such that
\begin{equation}
\mathcal{B}_h(u_h,\lambda_h;v_h,\mu_h-\lambda_h) \leq \mathcal{L}_h(v_h,\mu_h-\lambda_h) \quad \forall(v_h,\mu_h)\in V_h\times \Lambda_h.
\label{stabfem}
\end{equation}} 

In our  analysis, we need an inverse inequality which we write  as:   there exists a positive constant $C_{I}$ such that
\begin{equation}\label{invest}
C_{I}\sum_{K\in \Ch}h_{K}^{2}\Vert \Delta v_h \Vert_{0,K}^{2}\leq \Vert \nabla v_h\Vert_{0}^{2} \quad \forall v_h \in V_h.
\end{equation} 
The following stability condition holds.
\begin{theorem} \label{stabstab} Suppose that $ 0< \alpha < C_{I}$. It then holds:
for all $(v_{h},\xi_{h})\in V_{h}\times Q_{h}$, there exists $w_{h}\in V_{h}$, such that 
\begin{equation}
 \B_{h}(v_{h},\xi_{h};w_{h},-\xi_{h}) \gtrsim \big(   \Vert v _{h}\Vert_{1}+ \Vert \xi_{h} \Vert_{-1} \big)^{2}
\end{equation}
and 
\begin{equation}
  \|w _{h}\|_{1}\lesssim \Vert v _{h}\Vert_{1}+ \Vert \xi _{h}\Vert_{-1}.
\end{equation}
\end{theorem}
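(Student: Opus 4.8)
The plan is to construct the test function as a perturbation $w_h = v_h - \beta z_h$ of $v_h$ itself, where $\beta>0$ is a small parameter to be fixed at the end and $z_h\in V_h$ is supplied by Lemma~\ref{lem:aux}. The roles of the two pieces are different: testing against $v_h$ produces coercivity in $\|\nabla v_h\|_0$ together with control of the \emph{discrete} negative norm $\|\xi_h\|_{-1,h}$, while the correction $-\beta z_h$ upgrades this to the \emph{continuous} norm $\|\xi_h\|_{-1}$ demanded by the statement.

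First I would evaluate the diagonal contribution $\mathcal{B}_h(v_h,\xi_h;v_h,-\xi_h)$. Using the elementary identity $(-\Delta v_h-\xi_h,-\Delta v_h+\xi_h)_K=\|\Delta v_h\|_{0,K}^2-\|\xi_h\|_{0,K}^2$, the stabilization term collapses to $\mathcal{S}_h(v_h,\xi_h;v_h,-\xi_h)=\sum_{K\in\Ch}h_K^2\|\Delta v_h\|_{0,K}^2-\|\xi_h\|_{-1,h}^2$, while the pairings $\pm\langle v_h,\xi_h\rangle$ cancel, so that
\[
\mathcal{B}_h(v_h,\xi_h;v_h,-\xi_h)=\|\nabla v_h\|_0^2-\alpha\sum_{K\in\Ch}h_K^2\|\Delta v_h\|_{0,K}^2+\alpha\|\xi_h\|_{-1,h}^2.
\]
The inverse inequality \eqref{invest} bounds the middle sum by $\alpha C_I^{-1}\|\nabla v_h\|_0^2$, and since $0<\alpha<C_I$ this leaves the coercive lower bound $(1-\alpha/C_I)\|\nabla v_h\|_0^2+\alpha\|\xi_h\|_{-1,h}^2$.

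Next I would bring in $z_h$. Rescaling the maximizer from Lemma~\ref{lem:aux} so that $\|z_h\|_1=\|\xi_h\|_{-1}$ gives $\langle z_h,\xi_h\rangle\geq C_1\|\xi_h\|_{-1}^2-C_2\|\xi_h\|_{-1,h}\|\xi_h\|_{-1}$. Expanding $\mathcal{B}_h(v_h,\xi_h;w_h,-\xi_h)$ then produces, in addition to the diagonal part above, the term $\beta\langle z_h,\xi_h\rangle$ (carrying the wanted $+\beta C_1\|\xi_h\|_{-1}^2$) and three ``bad'' contributions: $-\beta(\nabla v_h,\nabla z_h)$, the cross term $-\alpha\beta\sum_{K}h_K^2(-\Delta v_h-\xi_h,\Delta z_h)_K$, and the negative part $-\beta C_2\|\xi_h\|_{-1,h}\|\xi_h\|_{-1}$ of $\beta\langle z_h,\xi_h\rangle$. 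Each of these I would estimate by Cauchy--Schwarz, invoking \eqref{invest} to bound $(\sum_K h_K^2\|\Delta z_h\|_{0,K}^2)^{1/2}\lesssim\|z_h\|_1=\|\xi_h\|_{-1}$ and Lemma~\ref{lem:m1inverse} to move between $\|\xi_h\|_{-1,h}$ and $\|\xi_h\|_{-1}$; the key structural fact is that every bad term carries a factor $\beta$ and is a product of two of the three quantities $\|\nabla v_h\|_0$, $\|\xi_h\|_{-1,h}$, $\|\xi_h\|_{-1}$.

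The final step is Young's-inequality bookkeeping: I would absorb the $\|\nabla v_h\|_0$-factors into $\tfrac12(1-\alpha/C_I)\|\nabla v_h\|_0^2$ and the $\|\xi_h\|_{-1,h}$-factors into $\tfrac\alpha2\|\xi_h\|_{-1,h}^2$, leaving a residual coefficient of the form $\beta C_1-O(\beta^2)$ on $\|\xi_h\|_{-1}^2$. Choosing $\beta$ small enough keeps this coefficient strictly positive, and Poincar\'e's inequality then converts $\|\nabla v_h\|_0^2+\|\xi_h\|_{-1}^2$ into $(\|v_h\|_1+\|\xi_h\|_{-1})^2$. The companion norm bound is immediate from the triangle inequality, $\|w_h\|_1\leq\|v_h\|_1+\beta\|z_h\|_1=\|v_h\|_1+\beta\|\xi_h\|_{-1}$. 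The main obstacle is exactly the recovery of the continuous norm $\|\xi_h\|_{-1}$ rather than its mesh-dependent surrogate $\|\xi_h\|_{-1,h}$, which is what forces the two-scale splitting through Lemma~\ref{lem:aux}; and it is in balancing $\beta$ against the stabilization that the hypothesis $\alpha<C_I$, ensuring a strictly positive coercive constant into which the cross terms can be absorbed, becomes indispensable.
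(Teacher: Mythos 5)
Your proof is correct and takes essentially the same route as the paper's: diagonal testing with $(v_h,-\xi_h)$ gives coercivity in $\|\nabla v_h\|_0$ and $\|\xi_h\|_{-1,h}$ via the inverse estimate \eqref{invest} and $\alpha<C_I$, and a small perturbation by the normalized maximizer from Lemma~\ref{lem:aux} recovers the continuous norm $\|\xi_h\|_{-1}$, with Cauchy--Schwarz and Young's inequality absorbing the cross terms exactly as in the paper. The only cosmetic differences are your sign convention $w_h=v_h-\beta z_h$ (which is internally consistent, whereas the paper writes $v_h+\delta q_h$ with a harmless sign slip in the expansion) and your optional appeal to Lemma~\ref{lem:m1inverse}, which the paper's bookkeeping does not need.
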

\begin{proof}
    Let $(v_h,\xi_h) \in V_h \times Q_h$  be  arbitrary. With the assumption $0<\alpha <C_{I}$, the inverse estimate \eqref{invest} gives
   \begin{align*}
        &\mathcal{B}_h(v_h,\xi_h;v_h,-\xi_h) 
         =\|\nabla v_h\|_0^2 -\alpha \sum_{K \in \Ch}h_{K}^{2} \|\Delta v_h\|_{0,K}^{2} + \alpha \|\xi_h\|_{-1,h}^2
       \\ & \geq \Big(1-\frac{\alpha}{C_{I}}\Big) \|\nabla v_h\|_0^2  + \alpha \|\xi_h\|_{-1,h}^2
       \\
       & \geq C_{3}\big(  \|\nabla v_h\|_0^2  +  \|\xi_h\|_{-1,h}^2 \big),
    \end{align*} 
    which guarantees stability with respect to the mesh-dependent norm for functions in $\Lambda_{h}$. To prove stability in the $H^{-1}$-norm, we let $q_h$ be the   function for which the supremum in Lemma~\ref{lem:aux} is obtained, viz.
     \begin{equation}
        \label{thmstabstab:pvtrick}
        \frac{\langle q_h,\xi_h\rangle}{\|q_h\|_1} \geq C_1 \|\xi_h\|_{-1}-C_2 \|\xi_h\|_{-1,h} .\end{equation}
       By homogeneity  we can assign the equality \begin{equation}
         \|q_h\|_1 = \|\xi_h\|_{-1}.
        \end{equation}
Using the above  relations,  estimate \eqref{invest}, and the Young's inequality, with $\varepsilon >0$,  gives 
\begin{align*}
\mathcal{B}_h&(v_h,\xi_h;q_h,0) =(\nabla v_{h},\nabla q_{h}) + \langle q_h,\xi_h\rangle
- \alpha   \sum_{K \in \Ch}h_{K}^{2} (\Delta v_h +\xi_{h}, \Delta q_{h})_{K}
\\ 
& 
\geq -\Vert \nabla v_{h} \Vert_{0} \Vert \nabla q_{h}\Vert_{0 }  + C_1 \|\xi_h\|_{-1}^{2}-C_2 \|\xi_h\|_{-1,h} \|\xi_h\|_{-1}
\\ 
& \quad - \alpha   \Big(\sum_{K \in \Ch}h_{K}^{2} \Vert \Delta v_h \Vert_{0,K}^{2}\Big)^{\frac12} 
 \Big(\sum_{K \in \Ch}h_{K}^{2} \Vert \Delta q_h \Vert_{0,K}^{2}\Big)^{\frac12} 
- \alpha \|\xi_h\|_{-1,h}  \Big(\sum_{K \in \Ch}h_{K}^{2} \Vert \Delta q_h \Vert_{0,K}^{2}\Big)^{\frac12} 
\\
& 
\geq -\Big(1+\frac{\alpha}{C_{I}}\Big)\Vert \nabla v_{h} \Vert_{0} \Vert \nabla q_{h}\Vert_{0 }  + C_1 \|\xi_h\|_{-1}^{2}-C_2 \|\xi_h\|_{-1,h} \|\xi_h\|_{-1}
 - \frac{\alpha}{\sqrt{C_I}} \|\xi_h\|_{-1,h} \|\nabla q_h \|_{0}
 \\
& 
\geq -\Big(1+\frac{\alpha}{C_{I}}\Big)\Vert \nabla v_{h} \Vert_{0} \Vert  \xi_{h}\Vert_{-1 }  + C_1 \|\xi_h\|_{-1}^{2}-C_2 \|\xi_h\|_{-1,h} \|\xi_h\|_{-1}
 - \frac{\alpha}{\sqrt{C_I}} \|\xi_h\|_{-1,h}  \|\xi_h \|_{-1}\\& 
\geq 
\Big(C_1-\frac{\varepsilon}{2}(1+\frac{\alpha}{C_{I}}+C_{2}+\alpha) \Big) \|\xi_h\|_{-1}^{2}
-\frac{1}{2\varepsilon }\Big( \Big(1+\frac{\alpha}{C_{I}}\Big)\Vert \nabla v_{h} \Vert_{0} ^{2} + (C_2+\frac{\alpha}{C_I}) \|\xi_h\|_{-1,h} ^{2}\Big)
\\
&\geq C _{4} \Vert \xi_{h}\Vert_{-1}^{2} - C_{5}\big(  \|\nabla v_h\|_0^2  +  \|\xi_h\|_{-1,h}^2 \big),
\end{align*}
where $\varepsilon$  has been chosen small enough. 
Hence 
\begin{equation}
\mathcal{B}_h(v_h,\xi_h ;v_{h}+\delta q_h,-\xi_{h})\geq 
 \delta C _{4} \Vert \xi_{h}\Vert_{-1}^{2} + (C_{3}- \delta C_{5})\big(  \|\nabla v_h\|_0^2  +  \|\xi_h\|_{-1,h}^2 \big)
\end{equation}
and the assertion follows by choosing $0 <\delta < C_{3}/C_{5}$.
\end{proof}

Next, we derive the a priori estimate. We  follow our analysis for the Stokes problem,  see \cite{SV15}, and  use a technique introduced by Gudi~\cite{Gudi}. 
The key ingredient is a  tool from the a posteriori error analysis, namely the estimate \eqref{b0} of Lemma \ref{lowbound}.

\begin{theorem}
The following a priori estimate holds
\[
\|u-u_h\|_1+\| \lambda -\lambda_h\|_{-1} \lesssim \inf_{v_h\in V_h} \| u-v_h\|_1 +
\inf_{\mu_h\in\Lambda_h} \big( \| \lambda-\mu_h\|_{-1} + \sqrt{\langle u-g,\mu_h\rangle}\, \big)+ \oscf
\]
\end{theorem}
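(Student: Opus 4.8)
The plan is to mirror the a priori argument for the mixed method (Theorem~\ref{thm:mixedapriori}), replacing the discrete stability of Theorem~\ref{dsmixed} by the stabilized stability of Theorem~\ref{stabstab} and controlling the new stabilization terms by the lower bound \eqref{b0}. Fix arbitrary $(v_h,\mu_h)\in V_h\times\Lambda_h$. Applying Theorem~\ref{stabstab} to the pair $(u_h-v_h,\lambda_h-\mu_h)$ produces a $w_h\in V_h$ with $\|w_h\|_1\lesssim \|u_h-v_h\|_1+\|\lambda_h-\mu_h\|_{-1}$ and
\[
\big(\|u_h-v_h\|_1+\|\lambda_h-\mu_h\|_{-1}\big)^2 \lesssim \mathcal{B}_h(u_h-v_h,\lambda_h-\mu_h;w_h,\mu_h-\lambda_h).
\]
I would then expand the right-hand side by bilinearity, apply the discrete inequality \eqref{stabfem} to the term $\mathcal{B}_h(u_h,\lambda_h;w_h,\mu_h-\lambda_h)$, and add and subtract $\mathcal{B}_h(u,\lambda;w_h,\mu_h-\lambda_h)$. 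This splits the estimate into a consistency contribution $\mathcal{L}_h(w_h,\mu_h-\lambda_h)-\mathcal{B}_h(u,\lambda;w_h,\mu_h-\lambda_h)$ and an approximation contribution $\mathcal{B}_h(u-v_h,\lambda-\mu_h;w_h,\mu_h-\lambda_h)$.

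For the consistency contribution I would use the orthogonality \eqref{stermcons}, which by linearity of $\mathcal{S}_h$ and $\mathcal{F}_h$ holds for $\mu_h-\lambda_h\in Q_h$ as well. Since $\mathcal{S}_h(u,\lambda;w_h,\mu_h-\lambda_h)=\mathcal{F}_h(w_h,\mu_h-\lambda_h)$, the $\alpha$-dependent terms cancel and the contribution reduces to the non-stabilized $\mathcal{L}(w_h,\mu_h-\lambda_h)-\mathcal{B}(u,\lambda;w_h,\mu_h-\lambda_h)$, exactly the quantity treated in the mixed proof. Using the strong equation $-\Delta u-\lambda=f$ it collapses to $\langle u-g,\mu_h-\lambda_h\rangle$, and the sign condition $\langle u-g,\lambda-\lambda_h\rangle\le 0$ (from $u-g\ge 0$, $\lambda_h\ge 0$ and complementarity) bounds it above by $\langle u-g,\mu_h-\lambda\rangle$.

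The crux is the approximation contribution, where the stabilization part $-\alpha\,\mathcal{S}_h(u-v_h,\lambda-\mu_h;w_h,\mu_h-\lambda_h)$ appears. The key observation, which is the heart of Gudi's technique, is that although $\Delta u$ and $\lambda$ are not separately in $L^2$, their combination is, and elementwise
\[
-\Delta(u-v_h)-(\lambda-\mu_h)=(-\Delta u-\lambda)+\Delta v_h+\mu_h=\Delta v_h+\mu_h+f,
\]
which is precisely the residual controlled by \eqref{b0}. I would thus bound the stabilization term by Cauchy--Schwarz: the first factor $\big(\sum_K h_K^2\|\Delta v_h+\mu_h+f\|_{0,K}^2\big)^{1/2}$ is estimated through \eqref{b0} (this is where $\oscf$ enters), while the second factor $\big(\sum_K h_K^2\|\Delta w_h+\mu_h-\lambda_h\|_{0,K}^2\big)^{1/2}$ is split by the triangle inequality and bounded using the inverse estimate \eqref{invest} for the $\Delta w_h$ part and the inverse inequality of Lemma~\ref{lem:m1inverse} (valid for an arbitrary $\Lambda_h$) for $\|\mu_h-\lambda_h\|_{-1,h}\lesssim\|\mu_h-\lambda_h\|_{-1}$. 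Combined with the continuity of $\mathcal{B}$ for the non-stabilized part, this bounds the approximation contribution by $(\|u-v_h\|_1+\|\lambda-\mu_h\|_{-1}+\oscf)(\|u_h-v_h\|_1+\|\lambda_h-\mu_h\|_{-1})$.

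Collecting both contributions gives a quadratic inequality in $\|u_h-v_h\|_1+\|\lambda_h-\mu_h\|_{-1}$; applying Young's inequality to absorb the linear factor and completing the square, then invoking the triangle inequality together with $\langle u-g,\lambda\rangle=0$ (so that $\langle u-g,\mu_h-\lambda\rangle=\langle u-g,\mu_h\rangle$) and finally taking infima over $v_h$ and $\mu_h$, yields the asserted estimate. I expect the single genuine obstacle to be the stabilization residual in the third step: one must recognize that the ill-defined $\Delta u$ term is absorbed into $f$ through the PDE, so that the residual appearing in $\mathcal{S}_h$ is exactly the a posteriori quantity \eqref{b0}. This identification is what removes the usual extra $H^2$-regularity requirement and makes the estimate hold with only $f\in L^2(\Omega)$.
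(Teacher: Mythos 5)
Your proposal is correct and follows essentially the same route as the paper's proof: stability from Theorem~\ref{stabstab}, the discrete inequality \eqref{stabfem} with the consistency relation \eqref{stermcons}, the key identification $-\Delta(u-v_h)-(\lambda-\mu_h)=\Delta v_h+\mu_h+f$ controlled by the lower bound \eqref{b0} (producing $\oscf$), the inverse estimates \eqref{invest} and Lemma~\ref{lem:m1inverse} for the test-function factor, and the same Young/triangle-inequality conclusion. The only cosmetic difference is that you add and subtract $\mathcal{B}_h(u,\lambda;\cdot)$ (well defined since $\Delta u+\lambda=-f\in L^2(\Omega)$) rather than $\mathcal{B}(u,\lambda;\cdot)$, which yields identical terms once \eqref{stermcons} is applied.
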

\begin{proof}
     Theorem~\ref{stabstab} implies that there exists $w_h\in V_h$
    such that
    \[
        \|w_h\|_1 \lesssim \|u_h-v_h\|_1+\|\lambda_h-\mu_h\|_{-1}
    \]
    and
    \[
        \left(\|u_h-v_h\|_1+\|\lambda_h-\mu_h\|_{-1}\right)^2 \lesssim \mathcal{B}_h(u_h-v_h,\lambda_h-\mu_h;w_h,\mu_h-\lambda_h).
    \]
    We then estimate, similarly to the bound \eqref{thm35:bound} of Theorem~\ref{thm:mixedapriori}, 
    \begin{align*}
        &\mathcal{B}_h(u_h-v_h,\lambda_h-\mu_h;w_h,\mu_h-\lambda_h)\\
        &=\mathcal{B}_h(u_h,\lambda_h;w_h,\mu_h-\lambda_h)-\mathcal{B}_h(v_h,\mu_h;w_h,\mu_h-\lambda_h)\\
                &\leq \mathcal{B}(u-v_h,\lambda-\mu_h;w_h,\mu_h-\lambda_h)+\mathcal{L}(w_h,\mu_h-\lambda_h)-\mathcal{B}(u,\lambda;w_h,\mu_h-\lambda_h)\\
        &\phantom{=}-\alpha\sum_{K \in \Ch} h_K^2 (-\Delta(u-v_h)-(\lambda-\mu_h),-\Delta w_h-(\mu_h-\lambda_h))_K\\
        &\leq\mathcal{B}(u-v_h,\lambda-\mu_h;w_h,\mu_h-\lambda_h)+\langle u - g, \mu_h-\lambda_h \rangle\\
        &\phantom{=}+\alpha \sum_{K \in \Ch} h_K^2 \|\Delta v_h+\mu_h+f\|_{0,K} \|\Delta w_h + \mu_h - \lambda_h\|_{0,K}.
    \end{align*}
    The first two terms are as in Theorem~\ref{thm:mixedapriori}.
    The last term is estimated using the triangle and Cauchy--Schwarz inequalities, Lemma~\ref{lowbound}, and the inverse inequality of Lemma~\ref{lem:m1inverse}:
    \begin{align*}
        &\sum_{K \in \Ch} h_K^2\|\Delta v_h+\mu_h+f\|_{0,K} \|\Delta w_h+\mu_h-\lambda_h\|_{0,K}\\
        &\leq\Big(\sum_{K \in \Ch} h_K^2 \|\Delta v_h + \mu_h +f\|^2_{0,K}\Big)^{\frac12}\Big(\sum_{K\in\Ch}h_K^2\|\Delta w_h\|_{0,K}^2\Big)^{\frac12}\\
        &\phantom{=}+\Big(\sum_{K \in \Ch} h_K^2 \|\Delta v_h + \mu_h +f\|^2_{0,K}\Big)^{\frac12}\Big(\sum_{K\in\Ch}h_K^2\|\mu_h-\lambda_h\|_{0,K}^2\Big)^{\frac12}\\
        &\lesssim (\|u-v_h\|_1+\|\lambda-\mu_h\|_{-1}+\oscf)(\|\nabla w_h\|_0+\|\mu_h-\lambda_h\|_{-1,h})
        \\
        &\lesssim (\|u-v_h\|_1+\|\lambda-\mu_h\|_{-1}+\oscf)(\|\nabla w_h\|_0+\|\mu_h-\lambda_h\|_{-1}).
    \end{align*}
\end{proof}

For the a posteriori estimate we define
\begin{equation}
    S_s= \|(g-u_h)_+\|_{1} + \sqrt{\langle (u_h-g)_+,\lambda_h\rangle}.
\end{equation}
\begin{theorem}\label{apostthstab}
The following a posteriori estimate holds  
\[
\|u-u_h\|_1+\|\lambda-\lambda_h\|_{-1}  \lesssim \eta + S_s.
\]
\label{thm:stabaposteriori}
\end{theorem}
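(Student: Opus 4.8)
The plan is to mirror the a posteriori analysis of the mixed method (Theorem~\ref{thm:mixedaposteriori}), using the continuous stability result (Theorem~\ref{contstab}) to reduce the error to a single evaluation of the bilinear form, and then to absorb the extra stabilization contribution via the lower bound estimate \eqref{b0} of Lemma~\ref{lowbound}. First I would invoke Theorem~\ref{contstab} to produce $w\in V$ with $\|w\|_1\lesssim \|u-u_h\|_1+\|\lambda-\lambda_h\|_{-1}$ and
\[
\big(\|u-u_h\|_1+\|\lambda-\lambda_h\|_{-1}\big)^2 \lesssim \mathcal{B}(u-u_h,\lambda-\lambda_h;w,\lambda_h-\lambda).
\]
Note that here the \emph{continuous} bilinear form $\mathcal{B}$ appears, since the exact pair $(u,\lambda)$ satisfies the continuous problem; the stabilization terms $\mathcal{S}_h$ and $\mathcal{F}_h$ are consistent by \eqref{stermcons}, so testing the continuous equation against the Cl\'ement interpolant $\widetilde w\in V_h$ will be the mechanism for bringing the discrete residual into play.

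Next I would use the discrete problem \eqref{stabfem} tested at $(-\widetilde w,0)$, exactly as in the mixed case, to write $0\leq -\mathcal{B}_h(u_h,\lambda_h;-\widetilde w,0)+\mathcal{L}_h(-\widetilde w,0)$, and insert this into the estimate. After expanding and recombining terms through Green's formula on each element, the right-hand side should decompose into the familiar element residual term $\sum_K(\Delta u_h+\lambda_h+f,w-\widetilde w)_K$, the edge jump term $-\sum_E(\llbracket\nabla u_h\cdot\bm n\rrbracket,w-\widetilde w)_E$, the contact term $\langle u_h-g,\lambda_h-\lambda\rangle$ handled exactly as before to yield $S_s$, plus an \emph{additional} stabilization term of the form $-\alpha\,\mathcal{S}_h(u_h,\lambda_h;\widetilde w,0)+\alpha\,\mathcal{F}_h(\widetilde w,0)$, i.e.\ $\alpha\sum_K h_K^2(\Delta u_h+\lambda_h+f,\Delta\widetilde w)_K$. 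The first two residual terms and the contact term are estimated precisely as in Theorem~\ref{thm:mixedaposteriori} using the Cl\'ement interpolation bounds, giving $\eta+S_s$ after division by $\|w\|_1$.

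The main obstacle will be controlling this extra stabilization term without a new estimator. The plan is to apply Cauchy--Schwarz in the $h_K$-weighted inner product, bounding it by $\big(\sum_K h_K^2\|\Delta u_h+\lambda_h+f\|_{0,K}^2\big)^{1/2}\big(\sum_K h_K^2\|\Delta\widetilde w\|_{0,K}^2\big)^{1/2}$; the second factor is controlled by the inverse inequality \eqref{invest} and the stability of the Cl\'ement interpolant as $\big(\sum_K h_K^2\|\Delta\widetilde w\|_{0,K}^2\big)^{1/2}\lesssim\|\nabla\widetilde w\|_0\lesssim\|w\|_1$, while the first factor is \emph{exactly} the quantity bounded in \eqref{b0} of Lemma~\ref{lowbound} with $v_h=u_h$, $\mu_h=\lambda_h$, yielding $\lesssim\|u-u_h\|_1+\|\lambda-\lambda_h\|_{-1}+\oscf$. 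The subtle point is that this reintroduces the error on the right-hand side; however, since $\oscf\lesssim\eta$ and the genuine error appears with a constant that can be made small (or absorbed after multiplying through by the $\alpha$ factor and comparing against the left-hand side), one concludes by a kick-back argument. I would therefore collect all terms, divide by $\|w\|_1$, use \eqref{postnorm}, and absorb the error contribution of the stabilization term into the left-hand side to obtain $\|u-u_h\|_1+\|\lambda-\lambda_h\|_{-1}\lesssim\eta+S_s$.
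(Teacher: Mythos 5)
Your overall architecture coincides with the paper's proof: you invoke the continuous stability (Theorem~\ref{contstab}) to reduce the squared error to one evaluation of $\mathcal{B}$, test the discrete inequality \eqref{stabfem} with $(-\widetilde w,0)$ for the Cl\'ement interpolant $\widetilde w$, and recover the element residual, edge jump and contact terms plus the extra stabilization term, with the first three estimated via Cl\'ement bounds. One imprecision: the contact term is \emph{not} handled ``exactly as before''---the mixed-method argument uses the discrete complementarity $\langle u_h-g,\lambda_h\rangle=0$, which fails for the stabilized method. The correct manipulation is $\langle u_h-g,\lambda_h-\lambda\rangle \leq \langle (g-u_h)_+,\lambda-\lambda_h\rangle + \langle (u_h-g)_+,\lambda_h\rangle$, which is precisely why $S_s$ contains $\langle (u_h-g)_+,\lambda_h\rangle$ rather than the $\langle (g-u_h)_+,\lambda_h\rangle$ appearing in $S_m$; since you cite $S_s$ you presumably intend this, but the step deserves to be spelled out.

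The genuine flaw is your treatment of the stabilization term. The factor $\bigl(\sum_{K} h_K^2\|\Delta u_h+\lambda_h+f\|_{0,K}^2\bigr)^{1/2}$ is, by definition, exactly the element-residual part of the computable estimator $\eta$, so in a reliability bound there is nothing left to do: Cauchy--Schwarz, the inverse inequality \eqref{invest} applied to $\widetilde w$, and Cl\'ement stability give $\alpha\sum_{K} h_K^2(-\Delta u_h-\lambda_h-f,\Delta\widetilde w)_K \lesssim \eta\,\|\nabla\widetilde w\|_0\lesssim \eta\,\|w\|_1$, which is how the paper concludes. Your detour through the efficiency estimate \eqref{b0} points the inequality in the wrong direction---it bounds the estimator by the error, i.e.\ it is Gudi's device for the \emph{a priori} analysis---and thereby reintroduces the unknown error on the right-hand side. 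The kick-back you then invoke is not justified: absorbing the resulting term $C\alpha\bigl(\|u-u_h\|_1+\|\lambda-\lambda_h\|_{-1}\bigr)^2$ into the left-hand side requires $C\alpha<1$, where $C$ is the uncontrolled product of the constants in Theorem~\ref{contstab} and Lemma~\ref{lowbound}; neither constant is tunable, and the theorem carries no smallness assumption on $\alpha$ (the stability condition $\alpha<C_I$ is unrelated to these constants). Replacing this last step by the direct bound repairs the proof and makes it essentially identical to the paper's.
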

\begin{proof}
    By the continuous stability there exists $w \in V$ with the properties
    \begin{equation}
        \|w\|_1 \lesssim \|u-u_h\|_1 + \|\lambda-\lambda_h\|_{-1},
    \end{equation}
    and
    \begin{equation}
        \left(\|u-u_h\|_1+\|\lambda-\lambda_h\|_{-1}\right)^2 \lesssim \mathcal{B}(u-u_h,\lambda-\lambda_h;w,\lambda_h-\lambda).
    \end{equation}
    Using the problem statement we have
    \[
        0 \leq -\mathcal{B}_h(u_h,\lambda_h;-\widetilde{w},0)+\mathcal{L}_h(-\widetilde{w},0),
    \]
    where $\widetilde{w}$ is the Cl\'ement interpolant of $w$.
    It follows that
    \begin{equation}
        \label{eq:stabapostpf}
        \begin{aligned}
            &\left(\|u-u_h\|_1+\|\lambda-\lambda_h\|_{-1}\right)^2 \\
            &\lesssim \mathcal{B}(u,\lambda;w,\lambda_h-\lambda)-\mathcal{B}(u_h,\lambda_h;w,\lambda_h-\lambda)\\
            &\lesssim \mathcal{L}(w,\lambda_h-\lambda)-\mathcal{B}(u_h,\lambda_h;w,\lambda_h-\lambda)-\mathcal{B}_h(u_h,\lambda_h;-\widetilde{w},0)+\mathcal{L}_h(-\widetilde{w},0)\\
            &\lesssim \mathcal{L}(w-\widetilde{w},\lambda_h-\lambda) - \mathcal{B}(u_h,\lambda_h,w-\widetilde{w},\lambda_h-\lambda)\\
            &\phantom{=} + \alpha \sum_{K \in \Ch} h_K^2 (-\Delta u_h-\lambda_h-f,\Delta \widetilde{w})_K.
        \end{aligned}
    \end{equation}
    The first two terms are estimated similarly as in the proof of Theorem~\ref{thm:mixedaposteriori} with the exception of the term $\langle u_h-g,\lambda_h-\lambda \rangle$.
   For the stabilized method, $\langle u_h-g,\lambda_h \rangle \neq 0$, and therefore
    \begin{align*}
        \langle u_h-g,\lambda_h-\lambda \rangle &\leq \langle (g-u_h)_+,\lambda \rangle - \langle g-u_h,\lambda_h \rangle \\
                                                &= \langle (g-u_h)_+,\lambda-\lambda_h \rangle + \langle (u_h-g)_+,\lambda_h \rangle \\
                                                &\leq \|(g-u_h)_+\|_1 \|\lambda-\lambda_h\|_{-1} + \langle (u_h-g)_+, \lambda_h \rangle.
    \end{align*}
    The last term in \eqref{eq:stabapostpf} is bounded using the Cauchy--Schwarz inequality and inverse estimate as
    \[
        \sum_{K \in \mathcal{C}_h} h_K^2 (-\Delta u_h-\lambda_h-f,\Delta \widetilde{w})_K \lesssim \Big(\sum_{K \in \mathcal{C}_h} h_K^2 \|\Delta u_h+\lambda_h+f\|_0^2\Big)^{\frac12} \|\nabla \widetilde{w}\|_0
    \]
    and by the properties of the Cl\'{e}ment interpolant we have that $\|\nabla \widetilde{w}\|_0\lesssim\|w\|_1$.
\end{proof}

Note that we have not explicitly defined the finite element spaces, and hence the method is stable and the estimate holds for all choices of finite element pairs. The optimal choice is dictated by the approximation properties and is
\begin{equation} V_h = 
\{ v_h\in V  : v_h\vert _K \in P_k(K) \ \forall K \in \Ch \},  
\end{equation}
and 
\begin{equation}
Q_h = \begin{cases}
    \{ \xi_h \in Q : \xi_h\vert_K \in P_{0} (K) \ \forall K \in \Ch \} &\mbox{ for } k=1,\\
    \{ \xi_h \in Q : \xi_h\vert_K \in P_{k-2} (K) \ \forall K \in \Ch \} &\mbox{ for } k\geq 2.
\end{cases}
\end{equation}
\begin{remark} 
\label{elimination} For the lowest order mixed method, i.e. for the method \eqref{mixedm} with $k=1$ in \eqref{Vh} and \eqref{Lh}, a local elimination of the bubble degrees of freedom gives the stabilized formulation with $k=1$, for which we have
$$
    \mathcal{S}_h(w,\xi;v,\mu)  = \sum_{K\in\mathcal{C}_h} h_K^2 (  \xi,   \mu)_K
\  \mbox{ and } \   
\mathcal{F}_h(v,\mu) =-\sum_{K\in\mathcal{C}_h} h_K^2 (f,  \mu)_K.
$$  This is in complete analogy with the relationship between the MINI and the Brezzi--Pitk\"aranta methods for the Stokes equations, cf. \cite{BP,Pierre}. Note also that  no upper bound needs to be imposed on $\alpha$ in this case.

\end{remark}

\section{Iterative solution algorithms}

The contact area, i.e.~the subset of $\Omega$ where the solution satisfies $u=g$, is  unknown and must be solved as a part of the solution process. Let us first consider the mixed method \eqref{mixedm}. The weak form corresponding to problem \eqref{mixedm} reads: find $(u_h,\lambda_h)\in V_h \times \Lambda_h$ such that
\begin{alignat}{2}
    (\nabla u_h,\nabla v_h) - (\lambda_h, v_h) &= (f,v_h), \quad && \forall v_h \in V_h, \\
    (u_h-g, \mu_h-\lambda_h ) &\geq 0, \quad && \forall \mu_h \in \Lambda_h. \label{eq:mixedweaksecond}
\end{alignat}
Testing the inequality \eqref{eq:mixedweaksecond} with $\mu_h=0$ and $\mu_h=2\lambda_h$ leads to the system
\begin{alignat*}{2}
    (\nabla u_h,\nabla v_h) - ( \lambda_h, v_h ) &= (f,v_h), \quad && \forall v_h \in V_h, \\
    ( u_h-g, \mu_h ) &\geq 0, \quad && \forall \mu_h \in \Lambda_h, \\
    ( u_h-g, \lambda_h ) &=0. &&~
\end{alignat*}
We consider the case of low order elements with  $1\leq k \leq 3$, and let $\xi_j$, $j\in\{1,\dots\!\,,M\}$, be the Lagrange (nodal) basis for $Q_h$. When writing $\mu_h =\sum_{j=1}^M \mu_j \xi_j$, we then have the characterization
\begin{equation}
\Lambda_{h}= \Big\{   \mu_{h} =\sum_{j=1}^M \mu_{j} \xi_j \ \vert \, \mu_{j}\geq 0 \, \Big\}.
\end{equation}
By letting $\varphi_j$, $j\in\{1,\dots\!\,,N\}$, be the  basis for $V_h$, and writing $u_h = \sum_{j=1}^N u_j \varphi_j$,    we arrive at the finite dimensional complementarity problem
\begin{align}
  \boldsymbol{A}\boldsymbol{u}-\boldsymbol{B}^T \boldsymbol{\lambda} &= \boldsymbol{f}, \\
\boldsymbol{B} \boldsymbol{u} &\geq \boldsymbol{g}, \label{eq:ncp1} \\
  \boldsymbol{\lambda}^T(\boldsymbol{B}\boldsymbol{u}-\boldsymbol{g})&=0, \label{eq:ncp2} \\
  \boldsymbol{\lambda} &\geq \boldsymbol{0}, \label{eq:ncp3}
\end{align}
where
\begin{alignat*}{7}
    \boldsymbol{A} &\in \mathbb{R}^{N \times N} \quad &&(\boldsymbol{A})_{ij}&&=(\nabla \varphi_i, \nabla \varphi_j), \qquad &&\boldsymbol{B} &&\in \mathbb{R}^{M \times N} \quad &&(\boldsymbol{B})_{ij}&&=(\xi_i,\varphi_j), \\
    \boldsymbol{f} &\in \mathbb{R}^N \quad &&~(\boldsymbol{f})_{i}&&=(f,\varphi_i), \qquad &&~\boldsymbol{g} &&\in \mathbb{R}^M \quad &&~(\boldsymbol{g})_i&&=(g,\xi_i),\\
    \boldsymbol{u} &\in \mathbb{R}^N \quad &&~(\boldsymbol{u})_i&&= u_i,\qquad &&~\boldsymbol{\lambda} &&\in \mathbb{R}^M \quad &&~(\boldsymbol{\lambda})_i &&= \lambda_i.
\end{alignat*}

\begin{remark}
  For higher order methods with $k>3$ and a nodal basis the inequalities $\lambda_h \geq 0$ and $\boldsymbol{\lambda}\geq\boldsymbol{0}$ are not  equivalent and another solution strategy is required if one wants the solution space to span all positive piecewise polynomials. 
\end{remark}

Following e.g.~Ulbrich~\cite{U11} the three constraints \eqref{eq:ncp1}--\eqref{eq:ncp3} can be written as a single nonlinear equation to get
\begin{equation}
    \label{eq:ncpsystem}
    \begin{aligned}
    \boldsymbol{A}\boldsymbol{u}-\boldsymbol{B}^T \boldsymbol{\lambda} &= \boldsymbol{f}, \\
        \boldsymbol{\lambda}-\max\{\boldsymbol{0},\boldsymbol{\lambda}+c(\boldsymbol{g}-\boldsymbol{B}\boldsymbol{u})\}&=\boldsymbol{0},
    \end{aligned}
\end{equation}
with any $c>0$. Application of the semismooth Newton method to the system \eqref{eq:ncpsystem} leads to Algorithm~\ref{alg:pdmixed}~\cite{HIK03}.
In the algorithm definition we use a notation similar to Golub--Van Loan~\cite{GvL} where, given a matrix $\boldsymbol{C}$
and a row position vector $\boldsymbol{i}$, we denote by $\boldsymbol{C}(\boldsymbol{i},:)$ the submatrix formed by the
rows of $\boldsymbol{C}$ marked by the index vector $\boldsymbol{i}$. Similarly, $\boldsymbol{b}(\boldsymbol{i})$ consists of
the components of vector $\boldsymbol{b}$ whose indices appear in vector $\boldsymbol{i}$.
Note that the linear system to be solved at each iteration step (Step 8) has  the saddle point structure. For this class of problems 
there exists numerous efficient solution methods, cf.~Benzi et al.~\cite{BGL}.

\begin{algorithm}
    \caption{Primal-dual active set method for the mixed problem}
    \label{alg:pdmixed}
    \begin{algorithmic}[1]
        \State $k = 0$; $\boldsymbol{\lambda}^0 = \boldsymbol{0}$
        \State Solve $\boldsymbol{A} \boldsymbol{u}^0 = \boldsymbol{f}$
        \While{$k<1$ or $\|\boldsymbol{\lambda}^k-\boldsymbol{\lambda}^{k-1}\|>\it{TOL}$}
            \State $\boldsymbol{s}^k = \boldsymbol{\lambda}^k + c(\boldsymbol{g}-\boldsymbol{B}\boldsymbol{u}^k)$
            \State Let $\boldsymbol{i}^k$ consist of the indices of the nonpositive elements of $\boldsymbol{s}^k$
            \State Let $\boldsymbol{a}^k$ consist of the indices of the positive elements of $\boldsymbol{s}^k$
            \State $\boldsymbol{\lambda}^{k+1}(\boldsymbol{i}^k) = \boldsymbol{0}$
            \State Solve
            \[
                \begin{bmatrix}
                    \boldsymbol{A} & -\boldsymbol{B}(\boldsymbol{a}^k,:)^T \\
                    -\boldsymbol{B}(\boldsymbol{a}^k,:) & \boldsymbol{0}
                \end{bmatrix}
                \begin{bmatrix}
                    \boldsymbol{u}^{k+1} \\
                    \boldsymbol{\lambda}^{k+1}(\boldsymbol{a}^k)
                \end{bmatrix}
                =
                \begin{bmatrix}
                    \boldsymbol{f} \\
                    -\boldsymbol{g}(\boldsymbol{a}^k)
                \end{bmatrix}
            \]
            \State $k = k+1$
        \EndWhile
    \end{algorithmic}
\end{algorithm}

Let us next consider the stabilized method \eqref{stabfem}. The respective discrete weak formulation is: find $(u_h,\lambda_h) \in V_h \times \Lambda_h$ such that
\begin{align*}
    (\nabla u_h, \nabla v_h)-( \lambda_h, v_h)-\alpha\!\!\sum_{K \in \mathcal{C}_h}\!\!h_K^2(\Delta u_h\!+\!\lambda_h,\Delta v_h)_K &= (f,v_h)+\alpha \!\!\sum_{K \in \mathcal{C}_h}\!\!h_K^2(f,\Delta v_h)_K, \\
    ( u_h\!-\!g, \mu_h\!-\!\lambda_h ) +\alpha\!\!\sum_{K \in \mathcal{C}_h}\!\!h_k^2(\Delta u_h\!+\!\lambda_h\!+\!f,\mu_h\!-\!\lambda_h)_K &\geq 0,
\end{align*}
hold for every $(v_h, \mu_h) \in V_h \times \Lambda_h$.

Through similar steps as in the mixed case we arrive at the algebraic system
\begin{align}
  \boldsymbol{A}_\alpha\boldsymbol{u}-\boldsymbol{B}^T_\alpha \boldsymbol{\lambda} &= \boldsymbol{f}_\alpha, \\
    \boldsymbol{B}_\alpha \boldsymbol{u} + \boldsymbol{C}_\alpha \boldsymbol{\lambda} &\geq \boldsymbol{g}_\alpha, \label{eq:sncp1} \\
    \boldsymbol{\lambda}^T(\boldsymbol{B}_\alpha \boldsymbol{u}+\boldsymbol{C}_\alpha \boldsymbol{\lambda}-\boldsymbol{g}_{\alpha})&=0, \label{eq:sncp2} \\
  \boldsymbol{\lambda} &\geq \boldsymbol{0}, \label{eq:sncp3}
\end{align}
where
\begin{alignat*}{3}
    \boldsymbol{A}_\alpha &\in \mathbb{R}^{N \times N} \quad &&(\boldsymbol{A}_\alpha)_{ij}&&=(\nabla \varphi_i, \nabla \varphi_j)-\alpha\!\!\sum_{K \in \mathcal{C}_h} h_K^2(\Delta \varphi_i, \Delta \varphi_j)_K,\\
    \boldsymbol{B}_\alpha &\in \mathbb{R}^{M \times N} \quad &&(\boldsymbol{B}_\alpha)_{ij}&&=(\xi_i,\varphi_j)+\alpha\!\!\sum_{K \in \mathcal{C}_h} h_K^2(\xi_i,\Delta \varphi_j)_K, \\
    \boldsymbol{C}_\alpha &\in \mathbb{R}^{M \times M} \quad &&(\boldsymbol{C}_\alpha)_{ij}&&=\alpha\!\!\sum_{K \in \mathcal{C}_h} h_K^2 (\xi_i,\xi_j)_K, 
    \\
    \boldsymbol{f}_\alpha &\in \mathbb{R}^N \quad &&~(\boldsymbol{f}_\alpha)_{i}&&=(f,\varphi_i)+\alpha\!\!\sum_{K \in \mathcal{C}_h}h_K^2(f,\Delta \varphi_i)_K, 
    \\
    \boldsymbol{g}_\alpha &\in \mathbb{R}^M \quad &&~(\boldsymbol{g}_\alpha)_i&&=(g,\xi_i)-\alpha\!\!\sum_{K \in \mathcal{C}_h}h_K^2(f,\xi_i)_K.
\end{alignat*}

The system corresponding to \eqref{eq:ncpsystem} reads
\begin{equation}
    \label{eq:ncpsystemstab}
    \begin{aligned}
        \boldsymbol{A}_\alpha\boldsymbol{u}-\boldsymbol{B}_\alpha^T \boldsymbol{\lambda} &= \boldsymbol{f}_\alpha, \\
        \boldsymbol{\lambda}-\max\{\boldsymbol{0},\boldsymbol{\lambda}+c(\boldsymbol{g}_\alpha-\boldsymbol{B}_\alpha\boldsymbol{u}-\boldsymbol{C}_\alpha \boldsymbol{\lambda})\}&=\boldsymbol{0},
    \end{aligned}
\end{equation}
which leads to Algorithm~\ref{alg:pdstab}. Note that the inversion of  the matrix $\boldsymbol{C}_\alpha$ is performed on each element separately, and that equation to be solved in Step 6 is symmetric and positive-definite. It has a condition number of $\mathcal{O}(h^{-2})$ and hence standard iterative solvers can be used. 
\begin{algorithm}
    \caption{Primal-dual active set method for the stabilized problem}
    \label{alg:pdstab}

    \begin{algorithmic}[1]
        \State $k = 0$
        \State Solve $\boldsymbol{A}_\alpha \boldsymbol{u}^0 = \boldsymbol{f}_\alpha$
        \State $\boldsymbol{\lambda}^0 = \boldsymbol{C}_\alpha^{-1}(\boldsymbol{g}_\alpha-\boldsymbol{B}_\alpha \boldsymbol{u}^0)$
        \While{$k<1$ or $\|\boldsymbol{\lambda}^k-\boldsymbol{\lambda}^{k-1}\|>\it{TOL}$}
            \State Let $\boldsymbol{a}^k$ consist of the indices of the positive elements of $\boldsymbol{\lambda}^k$
            \State Solve
            \begin{align*}
                &\left(\boldsymbol{A}_\alpha+\boldsymbol{B}_\alpha(\boldsymbol{a}^k,:)^T \boldsymbol{C}_\alpha(\boldsymbol{a}^k,\boldsymbol{a}^k)^{-1} \boldsymbol{B}_\alpha(\boldsymbol{a}^k,:)\right)\boldsymbol{u}^{k+1} \\
                &\qquad= \boldsymbol{f}_\alpha+\boldsymbol{B}_\alpha(\boldsymbol{a}^k,:)^T\boldsymbol{C}_\alpha(\boldsymbol{a}^k,\boldsymbol{a}^k)^{-1} \boldsymbol{g}_\alpha(\boldsymbol{a}_k)
            \end{align*}
            \State $\boldsymbol{\lambda}^{k+1} = \max\{\boldsymbol{0},\boldsymbol{C}_\alpha^{-1}(\boldsymbol{g}_\alpha-\boldsymbol{B}_\alpha \boldsymbol{u}^{k+1})\}$
            \State $k = k+1$
        \EndWhile
    \end{algorithmic}
\end{algorithm}
\section{Nitsche and penalty methods}
 Consider the stabilized method and recall that the stability and error estimates  hold for any finite element subspace $\Lambda_{h}$ for the reaction force.   Let  $\Ohc$  denote the contact region and assume, for the time being, that  its boundary   lies on the inter-element edges. We will derive the Nitsche's formulation by the following line of argument.

Noting that the functions of $\Lambda_h$ are discontinuous, we may eliminate the variable $\lambda_h$ locally on each element. Testing with $(0,\mu_h)$ in the stabilized problem \eqref{stabfem} gives
\[
    (u_h, \mu_h-\lambda_h) + \alpha\!\!\sum_{K\in\mathcal{C}_h} h_K^2 (\Delta u_h+\lambda_h,\mu_h-\lambda_h)_K \geq (g,\mu_h-\lambda_h)-\alpha\!\!\sum_{K \in \mathcal{C}_h} h_K^2 (f,\mu_h-\lambda_h)_K
\]
for every $\mu_h \in \Lambda_h$.
Since the contact area $\Ohc$ is assumed to be known, this reads 
\[
    (u_h,\mu_h)+\alpha\!\!\sum_{K \subset \Ohc} h_K^2(\Delta u_h+\lambda_h,\mu_h)_K=(g,\mu_h)-\alpha\!\!\sum_{K \subset \Ohc}h_K^2(f,\mu_h)_K
\]
giving locally
\begin{equation}
    \label{eq:locallambda}
    \lambda_h|_K = (\alpha h_K^2)^{-1} (\PK g - \PK u_h)|_K - \PK f|_K - \PK \Delta u_h|_K\quad \forall K \subset \Ohc,
\end{equation}
where $\PK$ is the $L^2$-projection onto $\Lambda_h$.  Testing with $(v_h,0)$ in \eqref{stabfem}
and substituting \eqref{eq:locallambda} into the resulting equation gives the problem: find $u_h \in V_h$ such that
\begin{align*}
    &(\nabla u_h, \nabla v_h)+\sum_{K \subset \Ohc}(\PK u_h,\PK \Delta v_h)_K + \sum_{K \subset \Ohc}(\PK \Delta u_h, \PK v_h)_K \\
    &+ \alpha^{-1}\!\!\sum_{K \subset \Ohc} h_K^{-2} (\PK u_h,\PK v_h)_K+\alpha\!\!\sum_{K \subset \Ohc} h_K^2 ((I-\PK) \Delta u_h, (I-\PK) \Delta v_h)_K \\
    &-\alpha\!\!\sum_{K \subset \Omega \setminus \Ohc} h_K^2 (\Delta u_h, \Delta v_h)_K \\
    &=(f,v_h)_{\Omega\setminus \Oc} + ((I-\PK)f,v_h)_{\Oc}+ \alpha\!\!\sum_{K \subset \Ohc} h_K^2 ((I-\PK)f,\Delta v_h)_K \\
    &\phantom{=}+\sum_{K \subset \Ohc}(\PK g,\PK \Delta v_h)_K + \alpha^{-1}\!\!\sum_{K \subset \Ohc} h_K^{-2}(\PK g,\PK v_h)_K \\
    &\phantom{=}+ \alpha\!\!\sum_{K \subset \Omega \setminus \Ohc} h_K^2 (f,\Delta v_h)_K,
\end{align*}
for every $v_h \in V_h$.

Now we are free to choose 
$$Q_h =\{ \xi_h\in Q:  \xi_h\vert _K\in P_{k}(K) \ \forall K\in \Ch\}.$$
 Then the formulation simplifies    to: find $u_h \in V_h$ such that
\begin{align*}
    &(\nabla u_h, \nabla v_h)+\sum_{K \subset \Ohc}(u_h,\Delta v_h)_{K} + \sum_{K \subset \Ohc}(\Delta u_h,v_h)_{K} + \alpha^{-1}\!\!\sum_{K \subset \Ohc} h_K^{-2} ( u_h, v_h)_K \\
    &-\alpha\!\!\sum_{K \subset \Omega \setminus \Ohc} h_K^2 (\Delta u_h, \Delta v_h)_K \\
    &=(f,v_h)_{\Omega\setminus \Oc} + \sum_{K \subset \Ohc}(g,\Delta v_h)_{K} + \alpha^{-1}\!\!\sum_{K \subset \Ohc} h_K^{-2}(g,v_h)_K + \alpha\!\!\sum_{K \subset \Omega \setminus \Ohc} h_K^2 (f,\Delta v_h)_K,
\end{align*}
holds for every $v_h \in V_h$. Note that the only thing that now remains of the discrete Lagrange multiplier is the rule to determine the contact region, i.e.~the elements $K$ for which  formula \eqref{eq:locallambda} yields a positive value for $\lambda_h$. 

This motivates the  formulation of Nitsche's method in the general case where the contact region is arbitrary. Given $ v_{h}\in V_{h} $ and the local mesh lengths $h_{K}$, 
we define the  $L^{2}(\Omega)$ functions $ \h$  and $\Delta_{h}v_{h} $ by 
\begin{equation} \h \vert_{K}=h_{K} ,\  \mbox{ and } \ 
\Delta_{h}v_{h}\vert _{K}=\Delta v_{h}\vert_{K }, \ K \in \Ch,\end{equation}
respectively. The discrete contact force is then defined as 
\begin{equation}\label{react}
 \lambda_{h}(x,y) = \max\{  0    , \big( (\alpha \h^2)^{-1} (  g -   u_h) -  f    -  \Delta_{h} u_h \big)(x,y) \,\}
\end{equation}
and the contact region is 
\begin{equation}
\Ohc = \{ \,(x,y)\in \Omega \, \vert \, \lambda_{h}(x,y)>0\, \}.
\end{equation}
 
\textsc{The Nitsche's method. }{\em 
Find $u_h \in V_h$ and $\Ohc=\Ohc(u_{h})$, such that
\begin{align*}
    &(\nabla u_h, \nabla v_h)+ (u_h,\Delta_h v_h)_{\Ohc} + (\Delta_h u_h,v_h)_{\Ohc} + \alpha^{-1}( \h^{-2}u_h, v_h)_{\Ohc} \\
    & - \alpha (\h^2 \Delta_h u_h, \Delta_h v_h)_{\Omega \setminus \Ohc}\\
    &=(f,v_h)_{\Omega\setminus \Ohc} +  (g,\Delta_h v_h)_{\Ohc} + \alpha^{-1} (\h^{-2}g,v_h)_{\Ohc} + \alpha (\h^2 f, \Delta_h v_h)_{\Omega \setminus \Ohc},
\end{align*}
holds for every $v_h \in V_h$.
} 

The iteration in Algorithm 2 corresponds now to solving the problem by updating the contact force through
\begin{equation}\label{itlambda}
 \lambda_{h}^{k+1}(x,y)= \max\{  0    , \big( (\alpha \h^2)^{-1} (  g -   u_h^{k}) -  f    -  \Delta_{h} u_h^{k} \big)(x,y) \,\}
\end{equation}
and  computing the contact area from
\begin{equation}
\Ohck = \{ \,(x,y)\in \Omega \, \vert \, \lambda_{h}^{k+1}(x,y)>0�\, \},
\end{equation}
with the stopping criterion
\begin{equation}
\Vert  \lambda_{h}^{k}- \lambda_{h}^{k-1}\Vert_{-1,h}\leq TOL.
\end{equation}

For the lowest order method, with 
\begin{equation} V_h = 
\{ v_h\in V  : v_h\vert _K \in P_1(K) \ \forall K \in \Ch \},  
\end{equation}
the Nitsche's method reduces to
\begin{equation}
   (\nabla u_h, \nabla v_h)    + \alpha^{-1}( \h^{-2}u_h, v_h)_{\Ohc} 
  =(f,v_h)_{\Omega\setminus \Ohc} + \alpha^{-1} (\h^{-2}g,v_h)_{\Ohc}
,
 \end{equation} 
which (except for  $(f,v_h)_{\Omega\setminus \Ohc} $ instead of $ (f,v_h)_{\Omega}$) is the standard  penalty formulation, cf.~Scholtz~\cite{SCH84}. 

Note that the a posteriori estimate of Theorem \ref{apostthstab}  still holds when the reaction force is computed from 
\eqref{react}.

Our conclusion is that the stabilized method can be implemented in a straightforward way using the above Nitsche's formulation. In practice, one can  replace $f$ and $g$ in \eqref{itlambda} with their  interpolants in $V_{h}$. 

\section{Numerical results}

Let $\Omega = \{(x,y) \in \mathbb{R}^2 : x^2+y^2 < 4\}$ and consider problem \eqref{stronglagr} with 
\begin{equation}
    \label{eq:examples}
    \left\{
    \begin{aligned}
        f(x,y) &= -1, \\
        g(r) &= \begin{cases}
            \sqrt{1-r^2} & \text{if $r<0.9$,}\\
            c_1 r+c_2         & \text{otherwise,}
        \end{cases}
    \end{aligned}
    \right.
\end{equation}
where $r=\sqrt{x^2+y^2}$ is the distance from the origin and $c_1, c_2$ are chosen such that the obstacle is $C^1$ in the whole domain.

The radial symmetry reduces \eqref{stronglagr}  to
the ordinary differential equation
\begin{equation}
    \label{eq:ode}
    \frac{\partial^2 u}{\partial r^2} + \frac{1}{r} \frac{\partial u}{\partial r} = 1, \ \ a< r< 2, \quad
    u(2)=0, \quad
    u(a)=g(a), \quad
    u'(a)=g'(a),
\end{equation}
where the unknowns are the function $u=u(r)$ and the radius $a$ of the contact area. Evidently $\lambda=0$ for $r>a$, and when $r<a$ the solution $(u,\lambda)$
satisfies
\begin{equation}
    u=g,\quad \text{and} \quad \lambda=1-\Delta g.
\end{equation}
Solving \eqref{eq:ode} leads to
\begin{align*}
    u(r) &= \left(1+a \log \frac{r}{2} \right)\left(\frac{r^2}{4}-1\right)+g'(r) a \log \frac{r}{2}, \quad \text{and} \quad a \approx 0.829\,.
\end{align*}
The solution $u$ has a step discontinuity in the second derivative in radial direction. Hence, it is globally only in $H^{5/2-\varepsilon}(\Omega), \ \varepsilon >0,$ but smooth in both the contact subregion and its complement.

First, the prescribed problem is solved by  mixed and stabilized methods using two mesh families: one that
follows the boundary of the true contact region (a conforming family of meshes) and an arbitrary mesh (nonconforming mesh)---see Fig.~\ref{fig:apriorimeshes}
for examples of the two different types of meshes.  In  Fig.~\ref{fig:analdisc}, the analytical solution is  compared against the discrete solutions  obtained by the  $P_2-P_0$ and $P_1-P_0$   stabilized methods. Note that the $P_1-P_0$ method does not (even for the conforming mesh) yield a reaction force converging in $L^{2}$. For the displacement we only give one picture for each mesh type since both methods give similar results.

The mesh is   refined uniformly and the errors of the displacement $u$  in the $H^1$-norm  and of the Lagrange multiplier $\lambda$ in the discrete $H^{-1}$-norm 
are computed and tabulated. The resulting convergence curves are visualized in Fig.~\ref{fig:aprioriuH1}
 as a function of the mesh parameter $h = \max_{K \in \mathcal{C}_h} h_K$. The parameter  $p$ stands for the  rate of convergence  $\mathcal{O}(h^{p})$.
The stabilization parameters were chosen through trial-and-error as $\alpha=0.1$ and $\alpha=0.01$ for the  $P_2-P_0$ and  $P_1-P_0$ stabilized methods, respectively.

The numerical example reveals that the limited regularity of the solution due to the unknown contact boundary
limits the convergence rate to $O(h^{3/2})$ and that the $H^1$-error of the lowest order methods are not affected by it.
When comparing the convergence rates of the Lagrange multiplier it can be seen that the lowest order mixed method
does not initially perform as well as the lowest order stabilized method. Through local elimination of the bubble
functions (cf. Remark \ref{elimination} above) this can be traced to a smaller effective stabilization parameter causing a larger constant in the
a priori estimate. If the stabilization parameter of $P_1-P_0$ method is further decreased, the performance
of the two methods will be identical.

It is further investigated whether the limited convergence rate due to the unknown contact boundary
can be improved by an adaptive refinement strategy. Based on the a posteriori estimate of the stabilized
method we define an elementwise error estimator as follows:
\begin{align*}
    \mathcal{E}_K(u_h,\lambda_h)^2&=h_K^2 \|\Delta u_h+\lambda_h+f\|_{0,K}^2 + \frac 12   h_{K} \| \llbracket \nabla u_h \cdot \boldsymbol{n} \rrbracket \|_{0,\partial K}^2 \\
              &\phantom{=}+\|(g-u_h)_+\|_{1,K}^2 + \int_K (u_h-g)_+\lambda_h\,\mathrm{d}x.
\end{align*}
Refining the triangles with $90\%$ of the total error
we create an improved sequence of meshes. See Fig.~\ref{fig:aposteriorimeshes} for examples of the resulting meshes.
We repeatedly adaptively refine the mesh and compute the solution and error of $P_2-P_0$ stabilized method. The resulting
convergence rates with respect to the number of degrees of freedom are given in Fig.~\ref{fig:aposteriorierror}. Note that the for a uniform mesh the relationship between the number of degrees of freedom and the mesh parameter is $N\sim h^{-2} $. Hence, we see that by the adaptivity we regain the optimal rate of convergence with respect to the degrees of freedom for the $P_2-P_0$ methods.
 
\section{Summary}

We have introduced families of bubble-enriched mixed and residual-based stabilized  finite element methods for discretizing the Lagrange multiplier formulation of the obstacle problem. We have shown 
that all methods yield stable approximations and proven the respective a priori and a posteriori error estimates. The lowest order methods have been tested numerically against an analytical solution and shown to lead to convergent solution strategies with optimal convergent rates.

\begin{figure}
    \centering
    \includegraphics[width=0.45\textwidth]{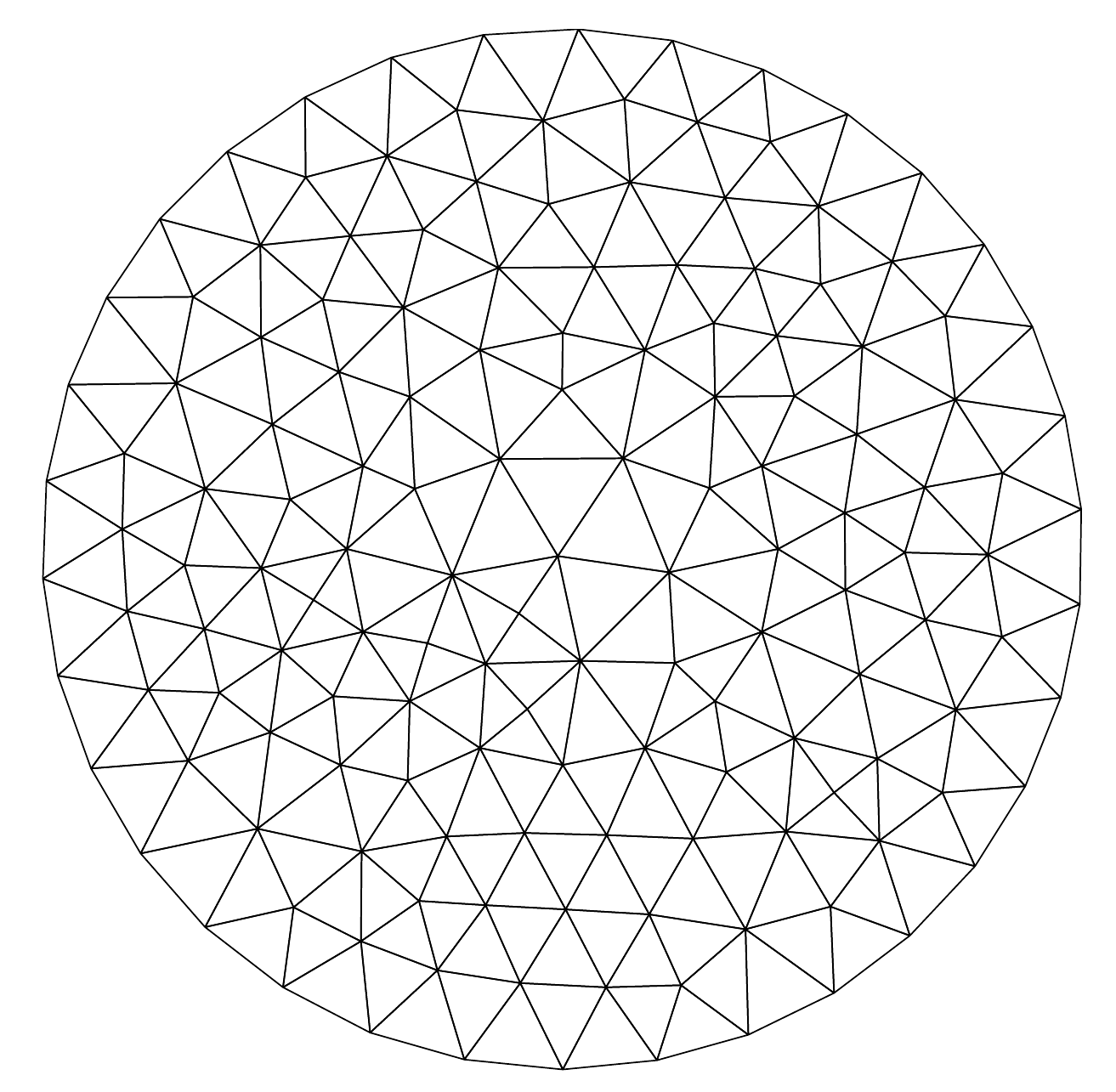}
    \includegraphics[width=0.45\textwidth]{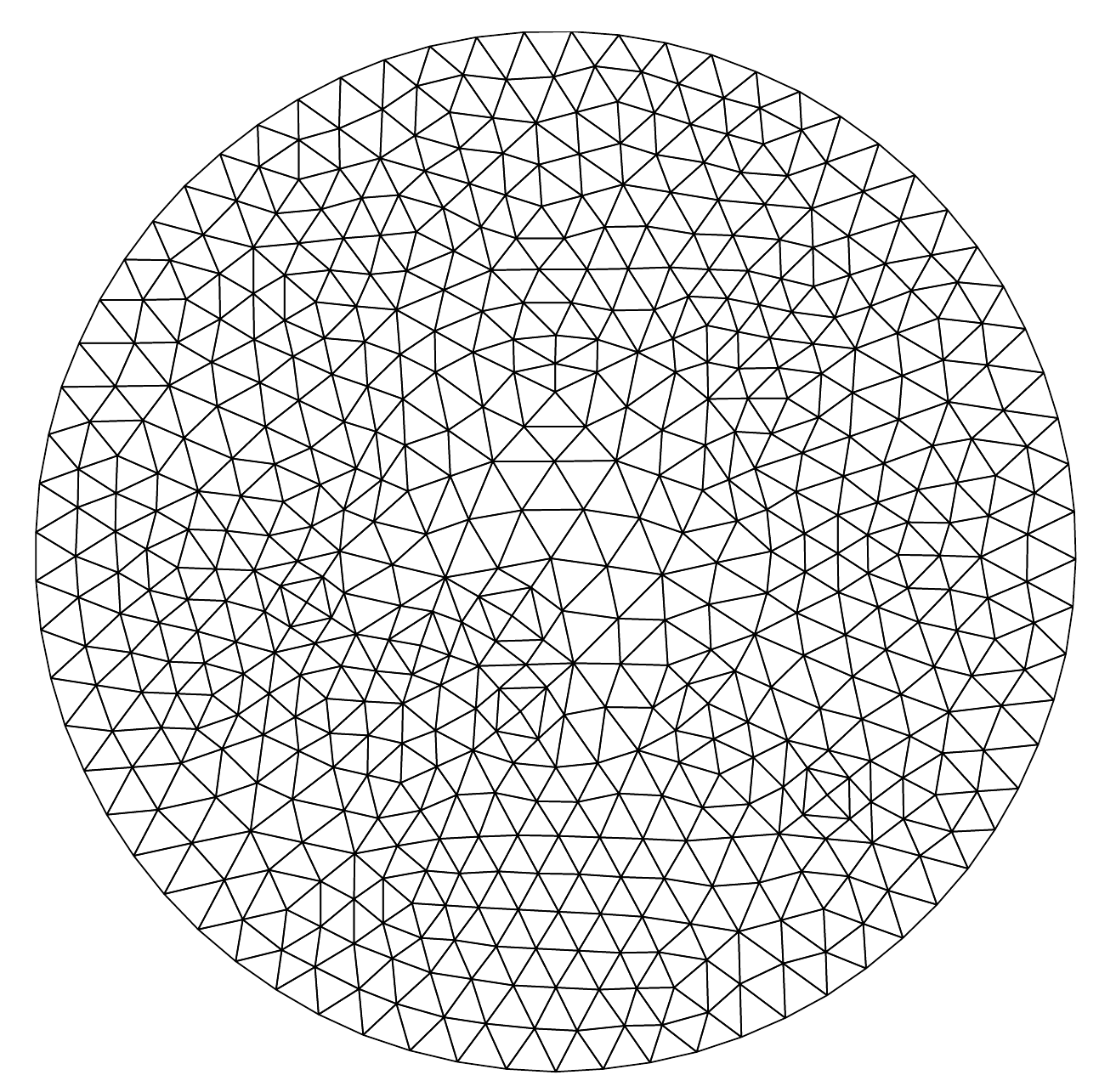}\\
    \includegraphics[width=0.45\textwidth]{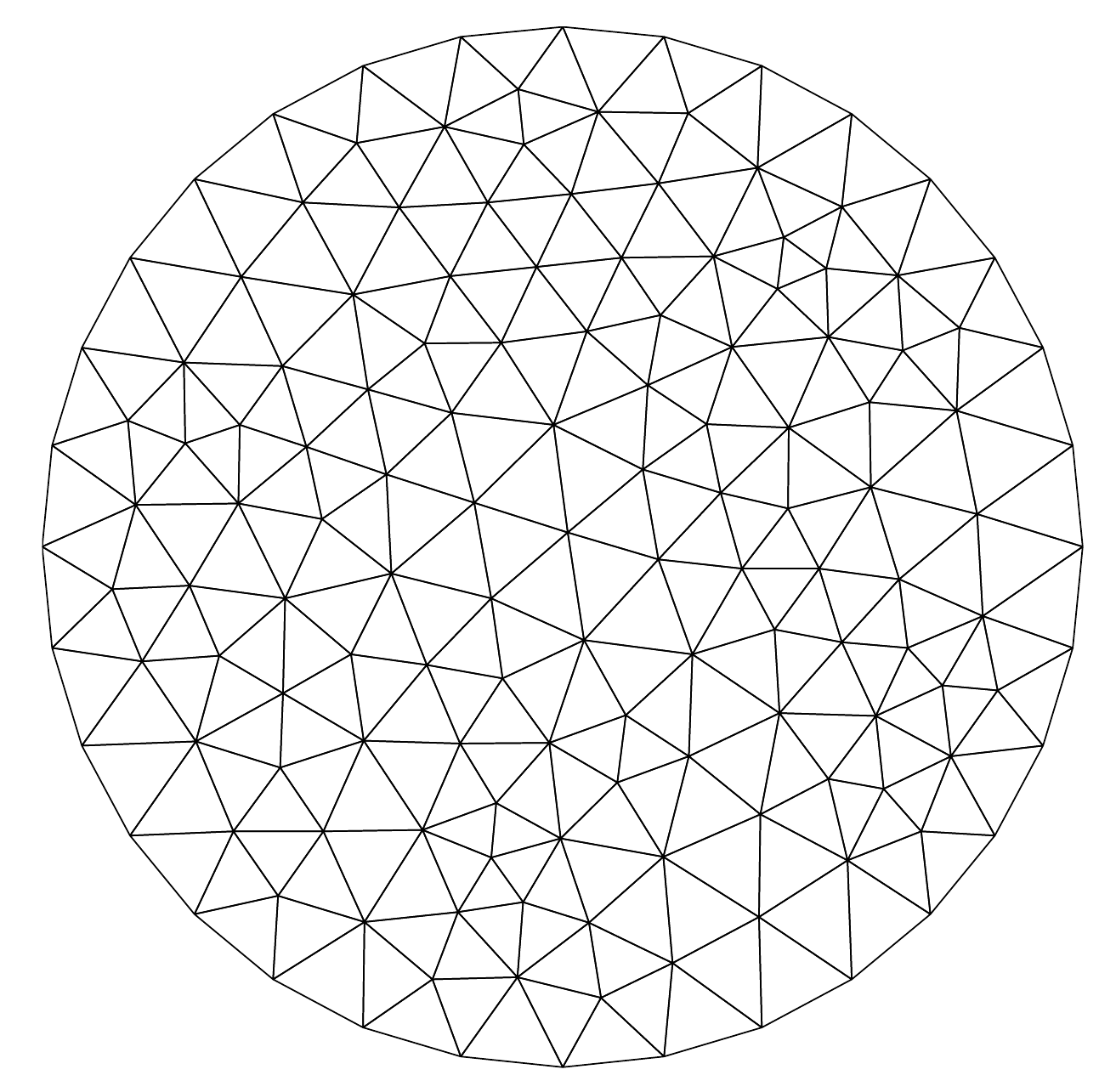}
    \includegraphics[width=0.45\textwidth]{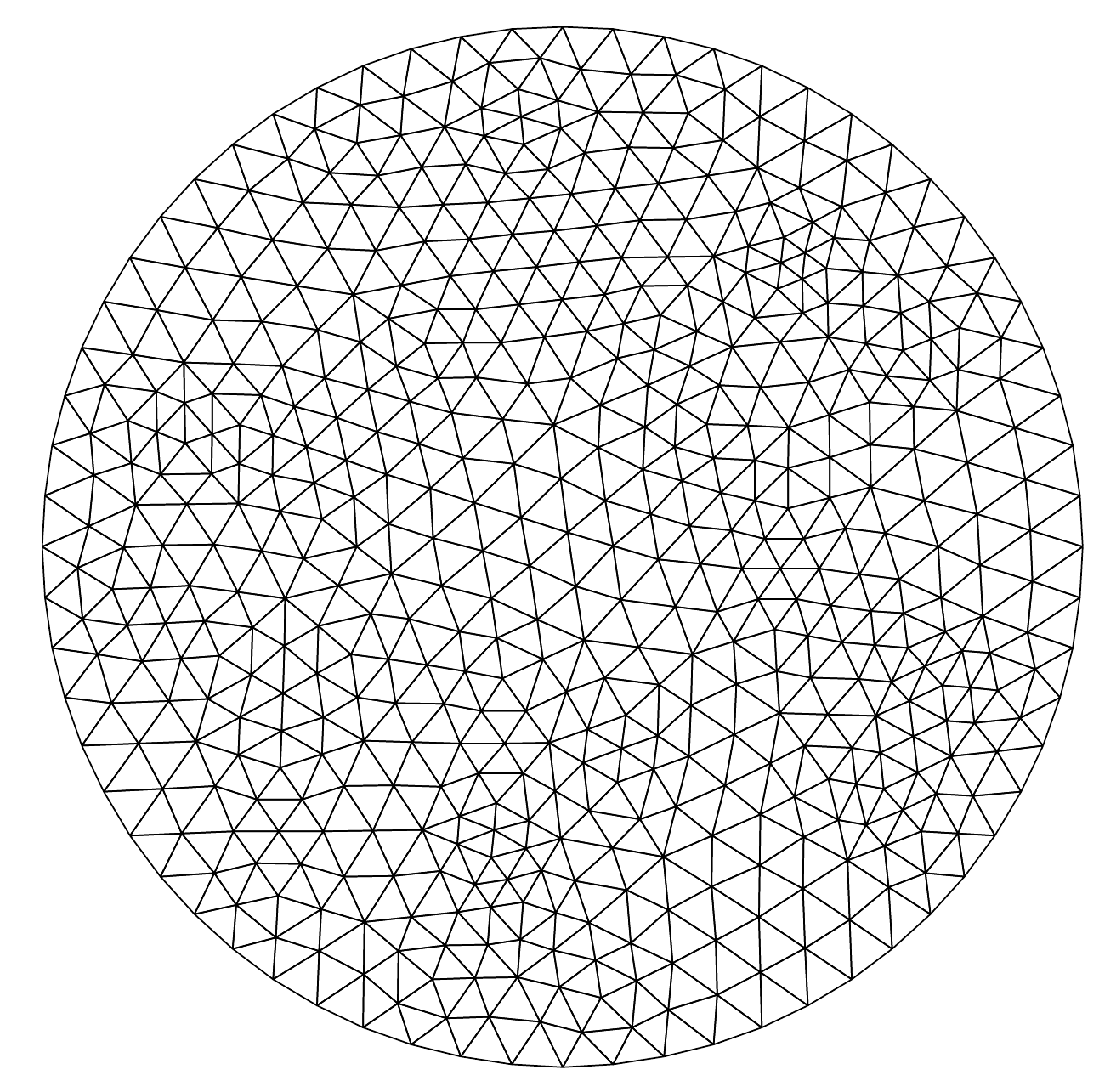}\\
    \caption{The two different mesh families: first conforming to the true contact boundary (upper panel), the other a general nonconforming family (lower panel).}
    \label{fig:apriorimeshes}
\end{figure}

\begin{figure}
    \centering
    \includegraphics[width=0.48\textwidth]{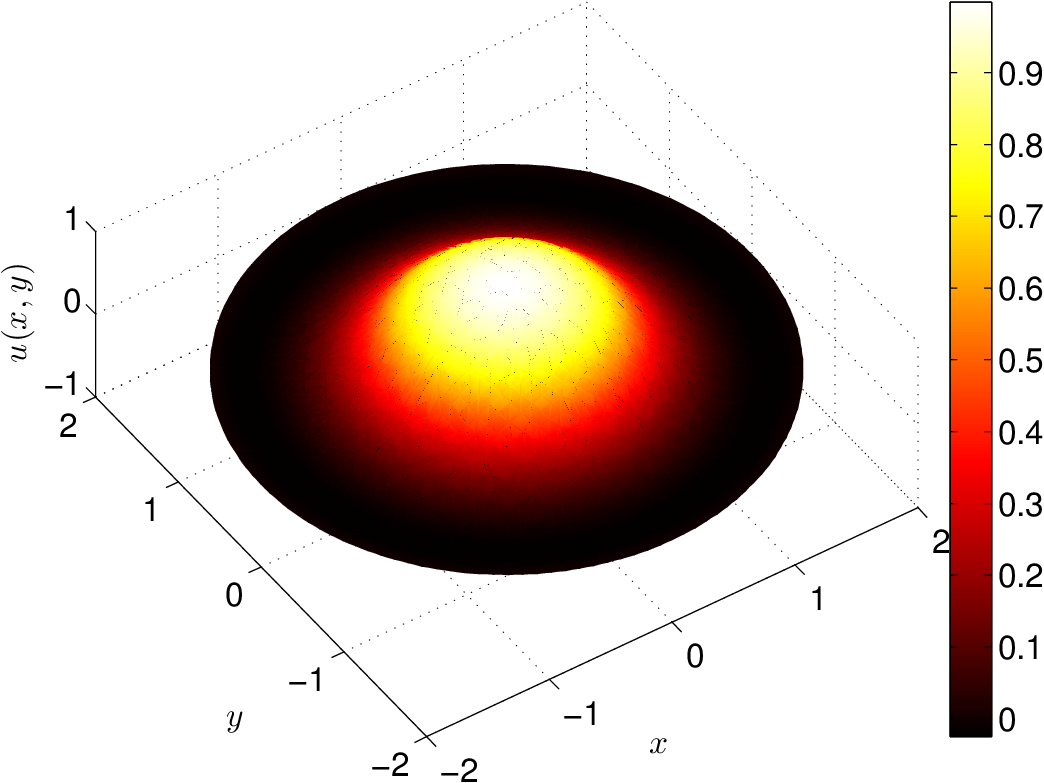}
    \includegraphics[width=0.48\textwidth]{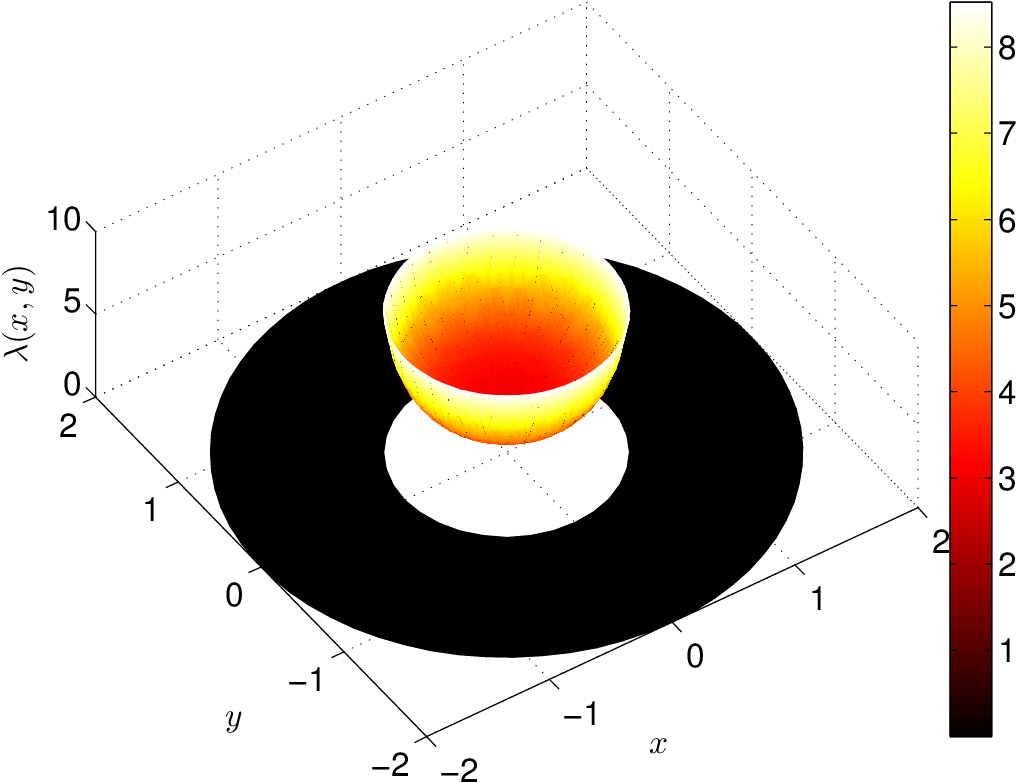}
    \includegraphics[width=0.48\textwidth]{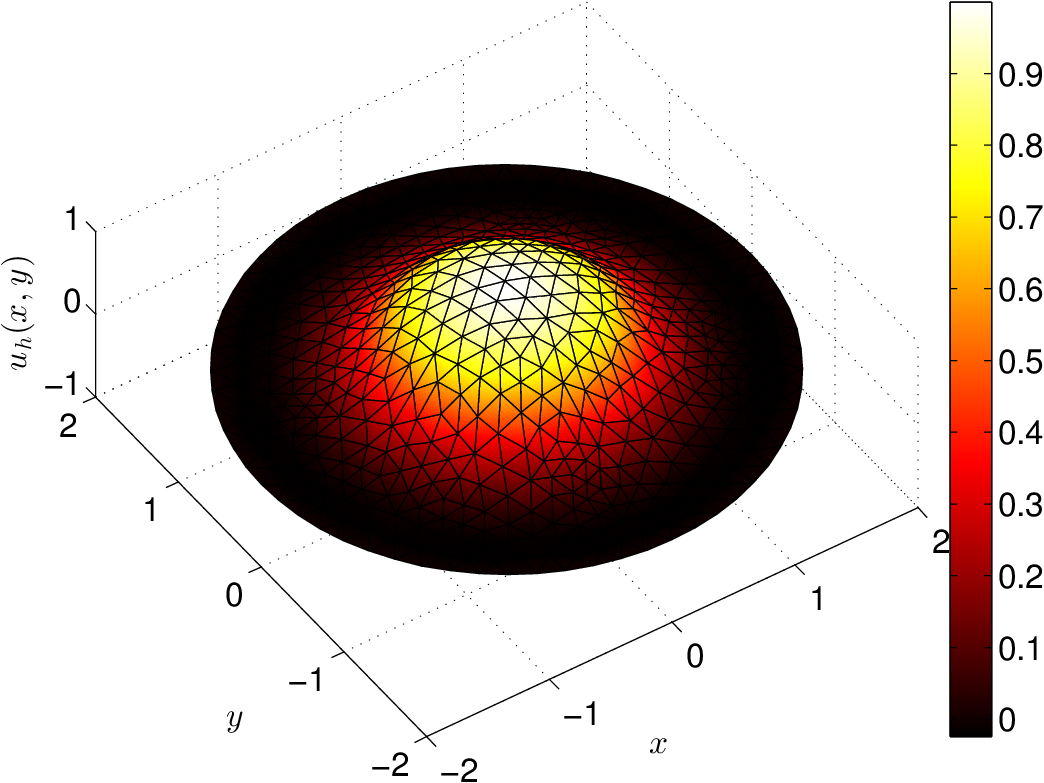}
    \includegraphics[width=0.48\textwidth]{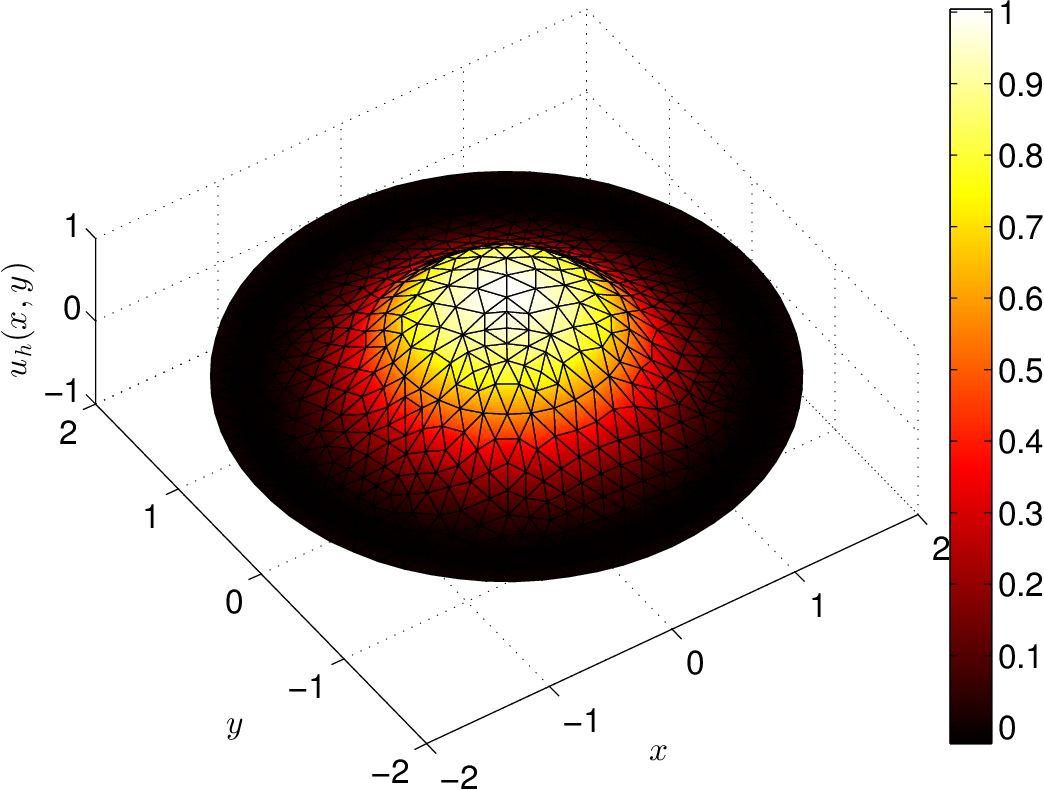}
    \includegraphics[width=0.48\textwidth]{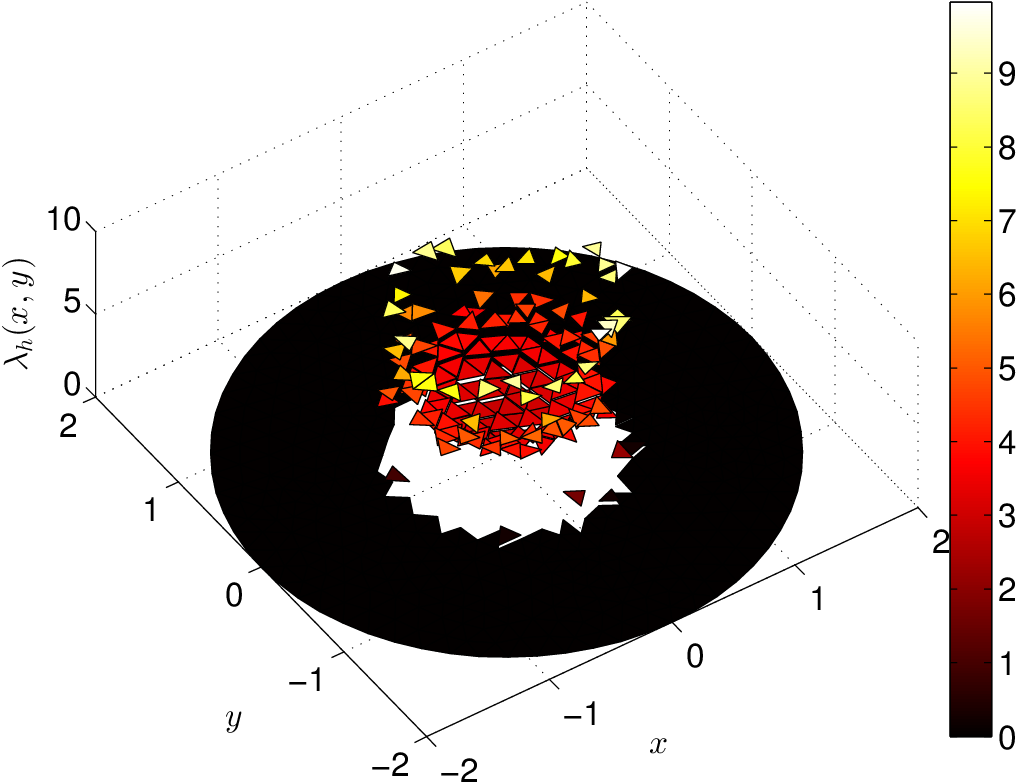}
    \includegraphics[width=0.48\textwidth]{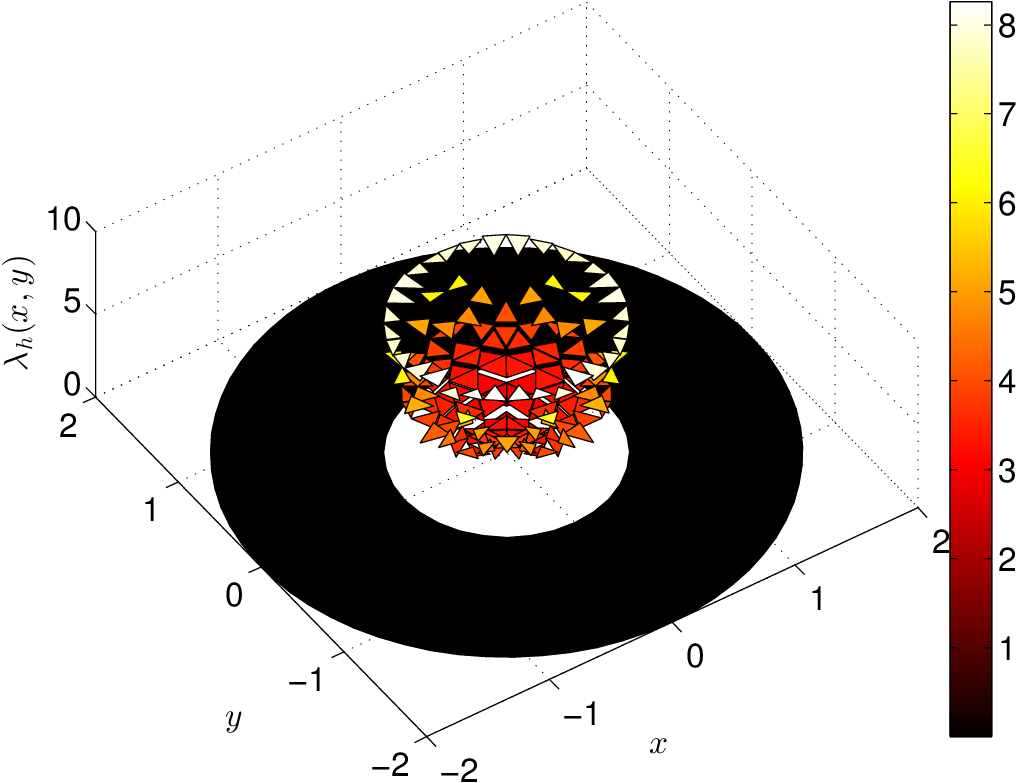}
    \includegraphics[width=0.48\textwidth]{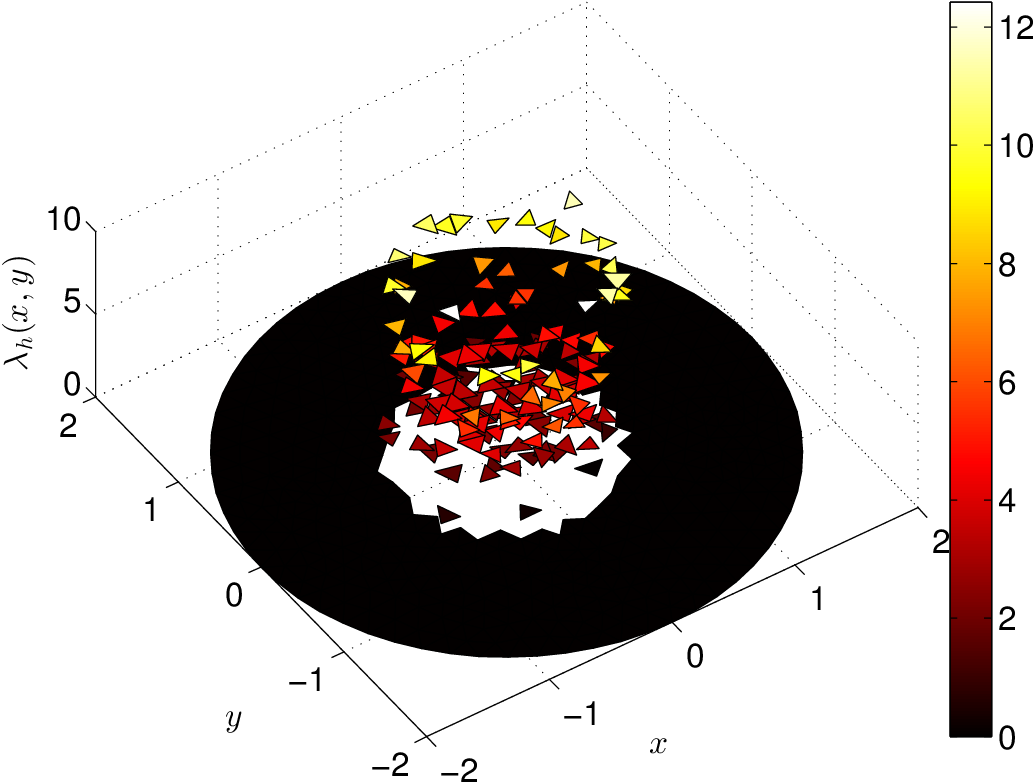}
    \includegraphics[width=0.48\textwidth]{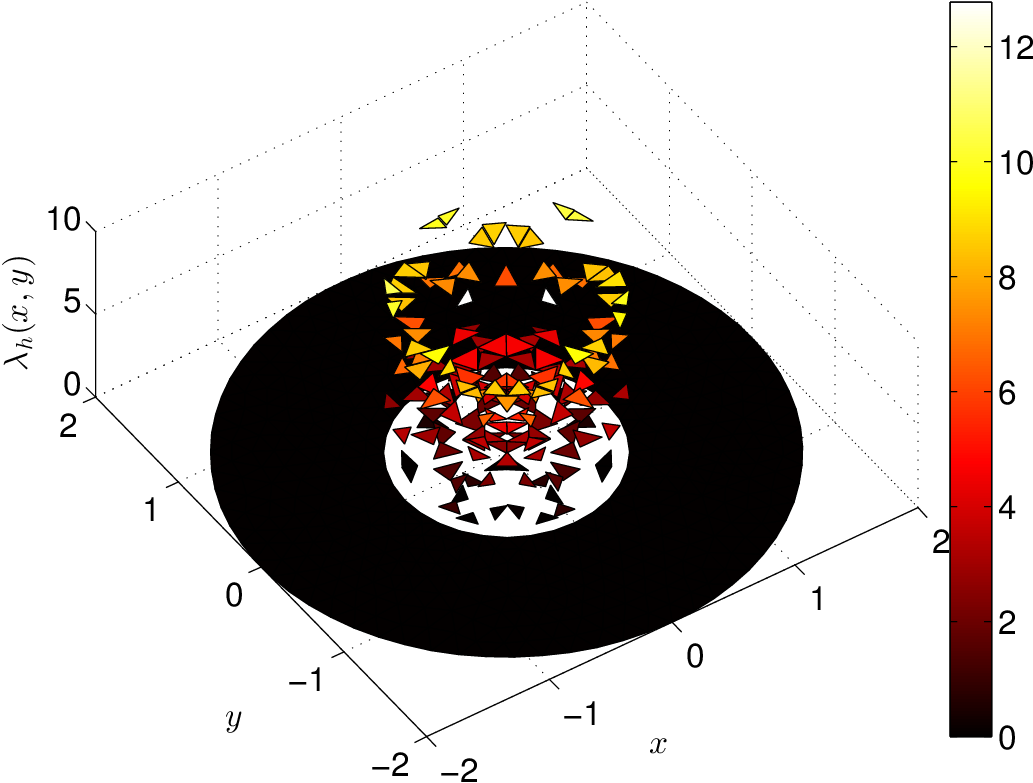}
    \caption{A comparison of analytic (upmost panel) and discrete (lower panels) solutions. The discrete solution was computed using nonconforming (left panel) and conforming (right panel) meshes and stabilized $P_2-P_0$ (third row) and $P_1-P_0$ (fourth row) methods.}
    \label{fig:analdisc}
\end{figure}

\begin{figure}
    \centering
    \includegraphics[width=\textwidth]{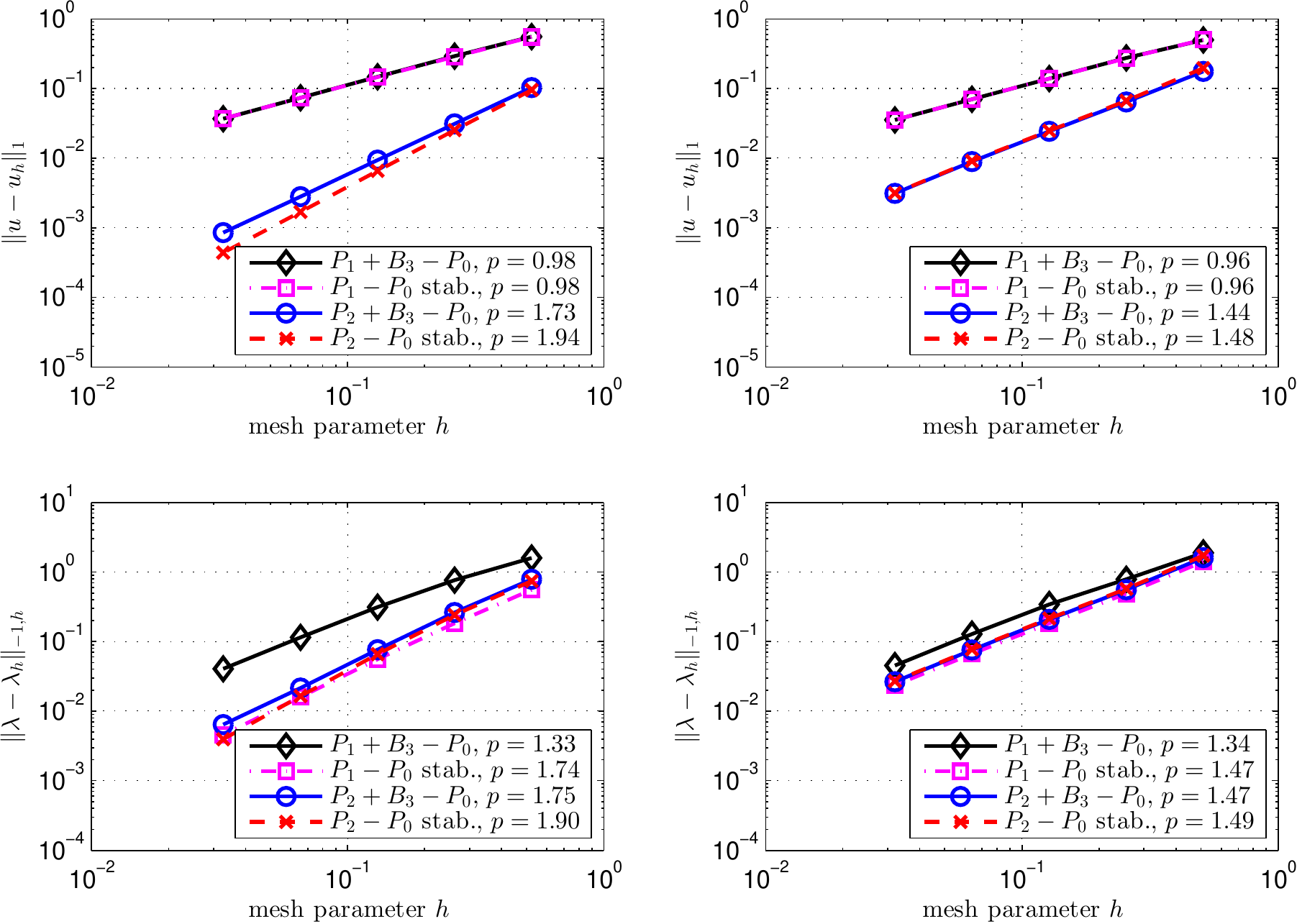}
    \caption{The convergence of the error $u-u_h$ in $H^1$-norm (upper panel) and the error $\lambda-\lambda_h$ in the discrete negative norm (lower panel) for the two different mesh families: conforming (left panel) and non-conforming (right panel).}
    \label{fig:aprioriuH1}
\end{figure}

\begin{figure}
   \centering
   \includegraphics[width=0.49\textwidth]{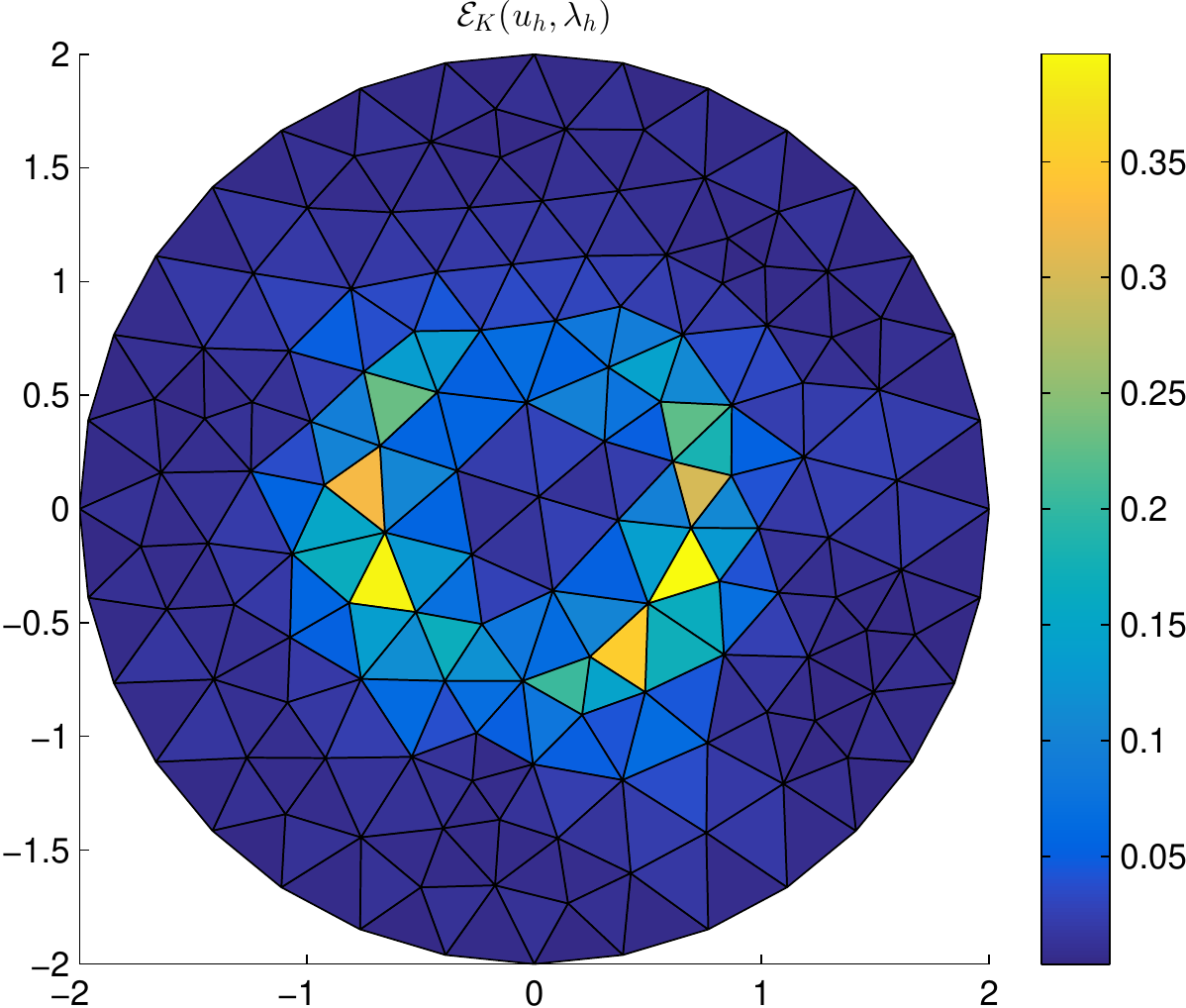}
   \includegraphics[width=0.49\textwidth]{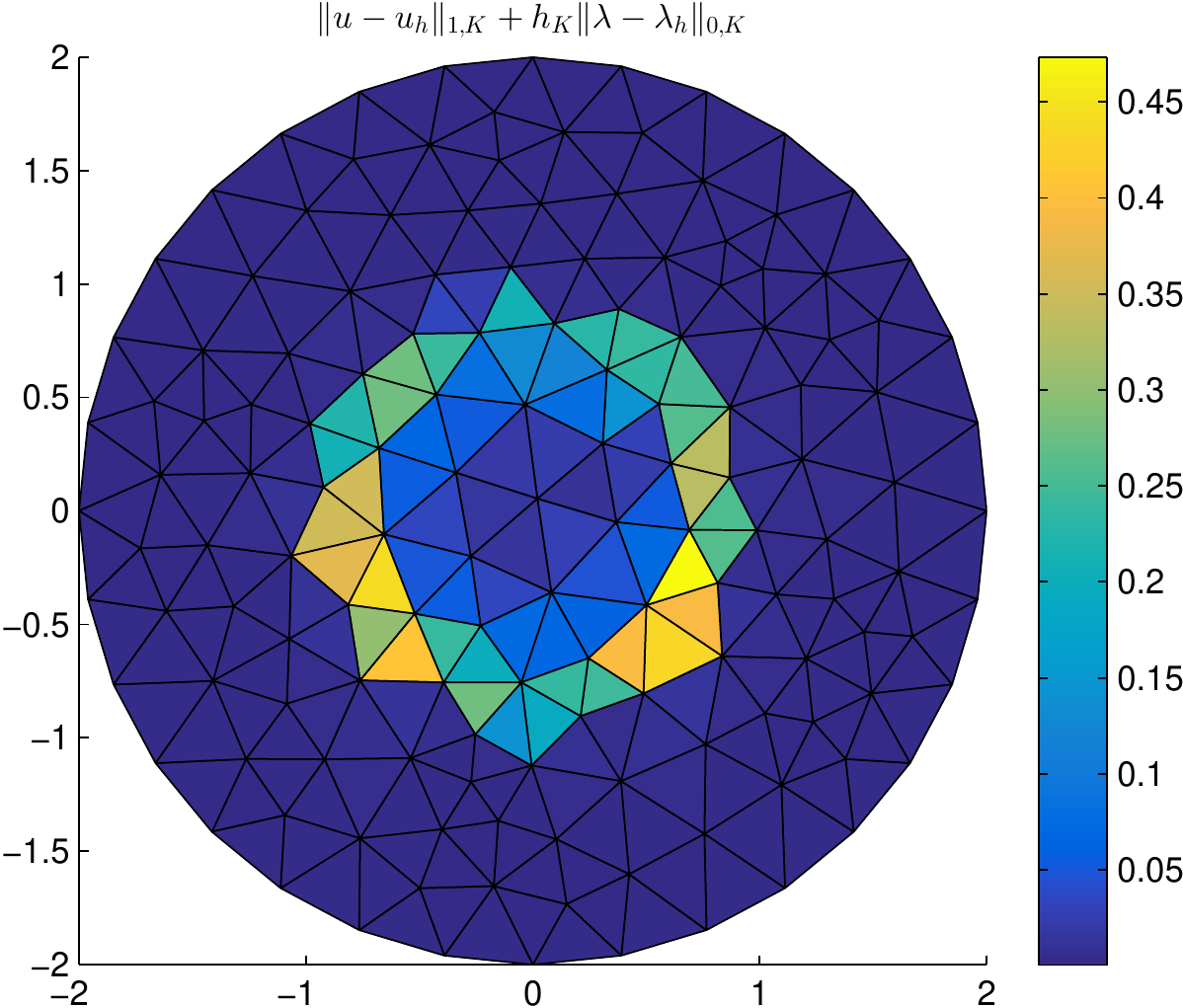}\\\vspace{0.25cm}
   \includegraphics[width=0.49\textwidth]{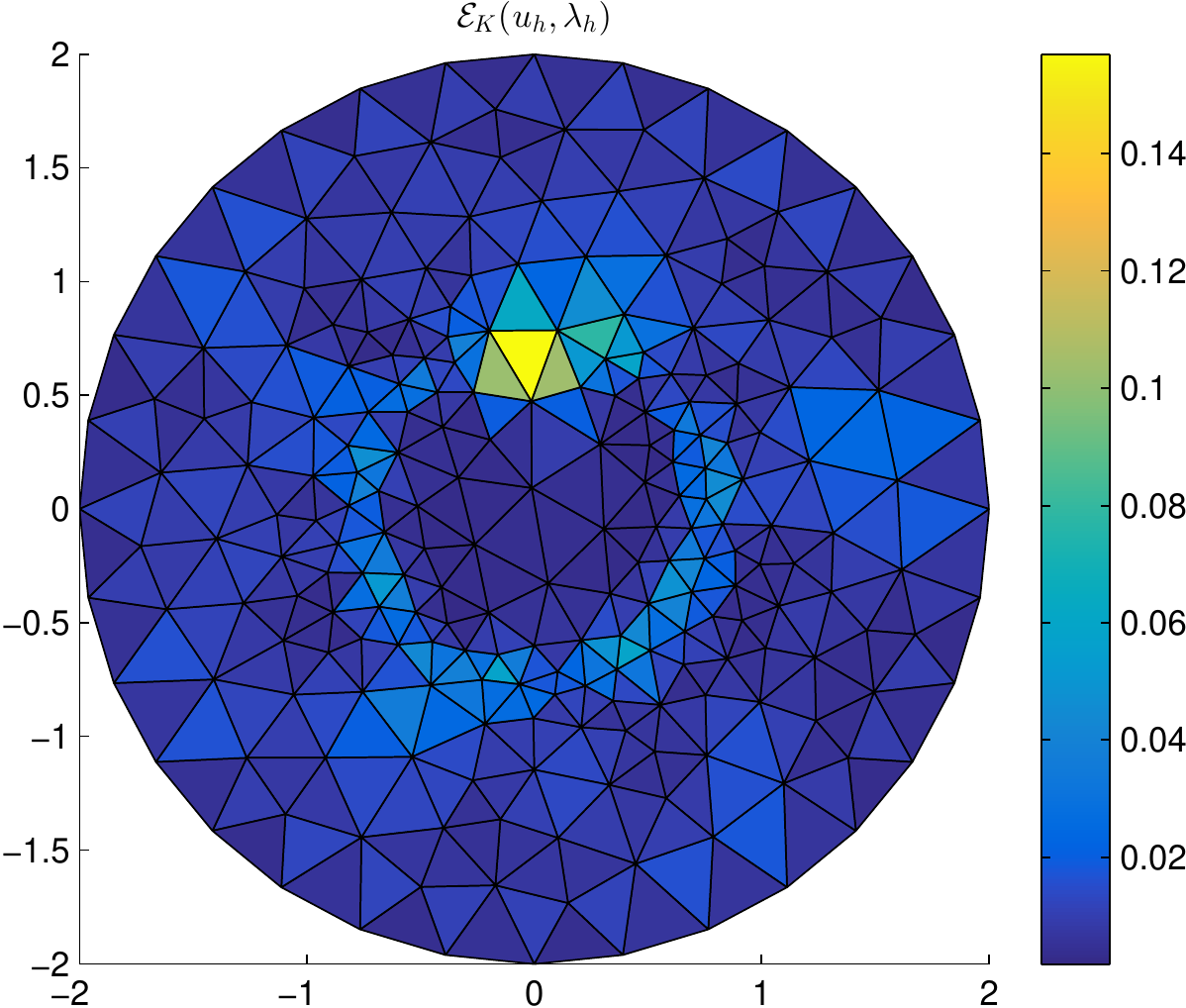}
   \includegraphics[width=0.49\textwidth]{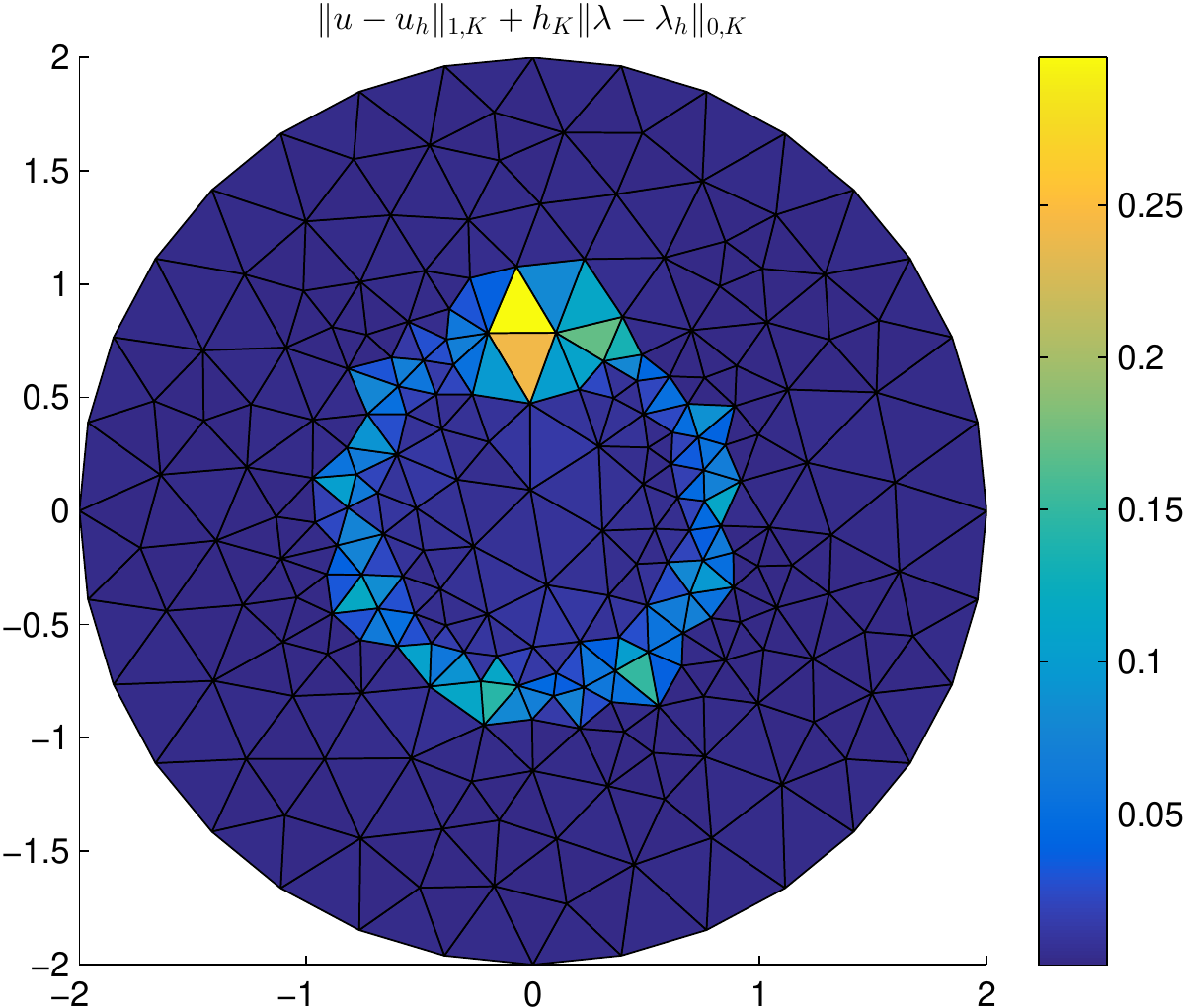}\\\vspace{0.25cm}
   \includegraphics[width=0.49\textwidth]{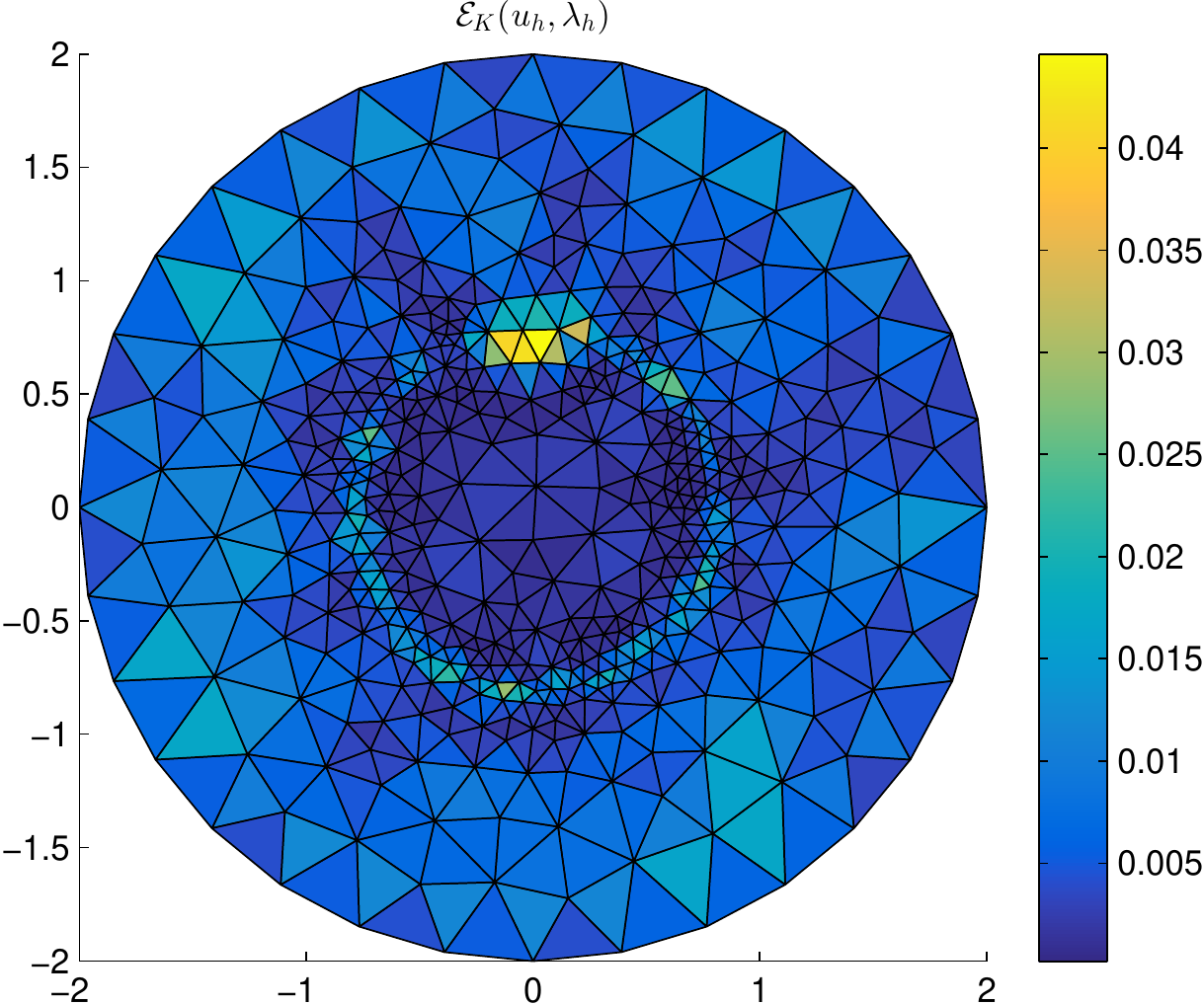}
   \includegraphics[width=0.49\textwidth]{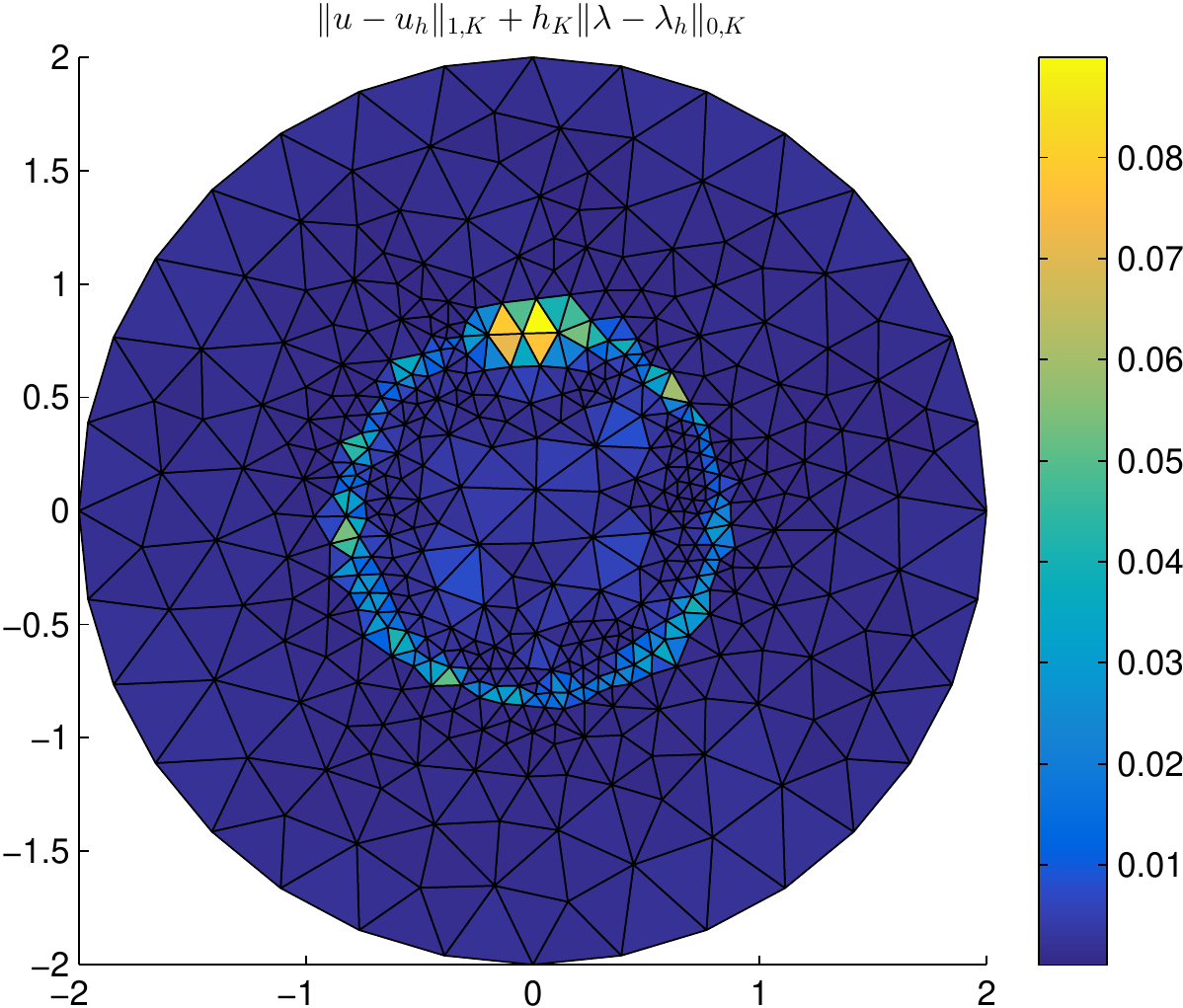}
   \caption{Three meshes arising from the adaptive refinement strategy. The local error estimators (left panels) are compared to the true local error (right 
   panels).}
   \label{fig:aposteriorimeshes}
\end{figure}

\begin{figure}
    \centering
    \includegraphics[width=\textwidth]{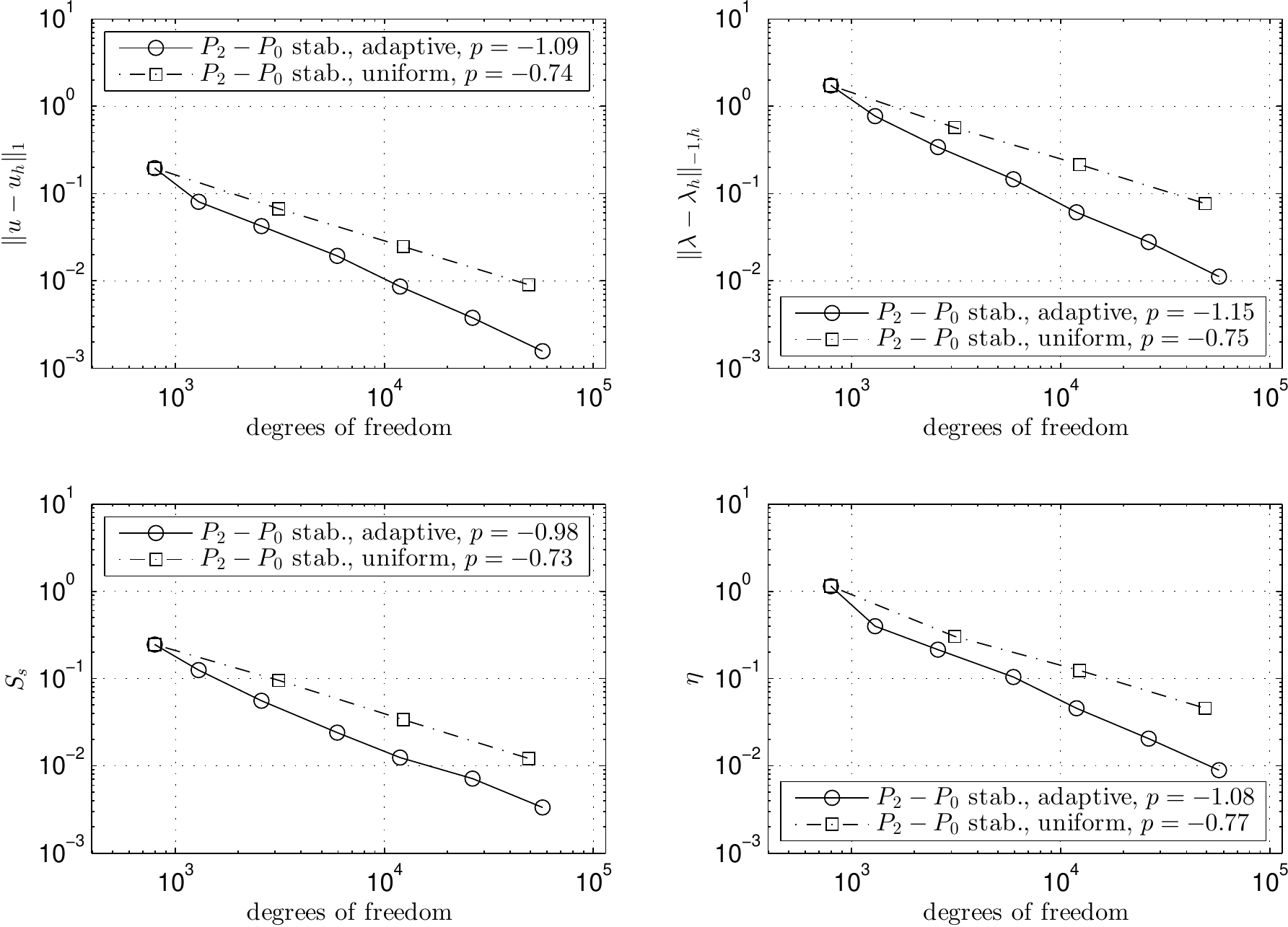}
    \caption{The convergence of the $P_2-P_0$ stabilized method with uniform and adaptive (nonconforming) refinements. The behavior of the error estimators $\eta$ and $S_s$ is shown separately for both cases.}
    \label{fig:aposteriorierror}
\end{figure}

\bigskip

{\bf Acknowledgements.}
Funding from Tekes -- the Finnish Funding Agency for Innovation (Decision number 3305/31/2015) and the Finnish Cultural
    Foundation is gratefully acknowledged.

\end{document}